\documentclass[letter, oneside, labels,11pt]{amsart}   	
\usepackage{geometry}                		
\usepackage{amsmath} 
\usepackage{amsthm} 
\usepackage{amssymb}	
\usepackage{graphicx} 

\usepackage{color}

\addtolength{\textheight}{.7in}
\addtolength{\textwidth}{.5in}

\addtolength{\topmargin}{-.1in}
\addtolength{\oddsidemargin}{-.2in}
\addtolength{\evensidemargin}{-.2in}


\usepackage{hyperref}	

\usepackage{enumitem}

\newcommand{\NatOne}{{\mathbb N}^*}
\newcommand{\NatZer}{{\mathbb N}}

\newtheorem{theorem}{Theorem}[section]
\newtheorem{lemma}[theorem]{Lemma}
\newtheorem{corollary}[theorem]{Corollary}
\newtheorem{prop}[theorem]{Proposition}
\theoremstyle{definition}

\newtheorem{prb}{Extremal Problem}
\theoremstyle{remark}
\newtheorem*{rmk}{Remark}

\newcommand{\R}{{\mathbb R}}
\newcommand{\B}{{\mathbb B}}
\newcommand{\sph}{{\mathbb S}}
\renewcommand{\div}{\text{$\nabla\cdot$}}
\newcommand{\grad}{\text{$\nabla$}}
\newcommand{\bs}[1]{{\boldsymbol  #1}}
\newcommand{\bb}[1]{{\mathbf  #1}}
\newtheorem{exa}{Example}[section]

\DeclareMathOperator{\supp}{supp}

\title[Divergence-free measures and inverse potential problems]{Divergence-free measures in the plane
and inverse Potential Problems in divergence form}

    \author{L. Baratchart}
   \address{Projet APICS, INRIA, 2004 route des Lucioles, BP 93,
Sophia-Antipolis, 06902 Cedex, FRANCE}
\email{Laurent.Baratchart@inria.fr}
  \author{C. Villalobos Guill\'en}
\address{Department of Mathematics,
Vanderbilt University,
Nashville, TN 37240, USA}
\email{cristobal.villalobos.guillen@protonmail.com}

  \author{D.P.~Hardin}
\address{Department of Mathematics,
Vanderbilt University,
Nashville, TN 37240, USA}
\email{doug.hardin@vanderbilt.edu}

\thanks{This research  was supported, in part, by the U.\ S.\ National Science Foundation under
grant  DMS-1521749.}

  	\keywords{divergence free, distributions, planar, total variation of measures, magnetization, inverse problems, purely 1-unrectifiable, sparse recovery, total variation regularization}
	\subjclass{31B20, 49N45, 49Q20, 86A22}

\setenumerate[0]{label={\rm (\alph*)}}

\begin{document}

\begin{abstract}
We show that a divergence-free measure on the plane is a continuous sum of unit tangent vector fields on rectifiable Jordan curves.
This \emph{loop} decomposition is more precise than the general decomposition in terms of elementary solenoids given by S.K.  Smirnov
when applied to the planar case.  
The proof rests on a version of the co-area formula for homogeneous BV 
functions,  and on the approximate continuity of measure theoretic connected components of suplevel sets of such functions with respect to  the level.
We apply these results to inverse potential problems
whose source term is the divergence of some unknown (vector-valued) measure; e.g.,   inverse magnetization problems
when magnetizations are modeled by $\R^3$-valued Borel measures. 
We investigate methods  for  recovering a magnetization $\bs{\mu}$  
by penalizing  the measure theoretic total variation norm  $\|\bs{\mu}\|_{TV}$.
In particular,  if a magnetization is supported in a plane, then 
$TV$-regularization schemes always have a unique minimizer, even in the presence of noise.  It is further shown that $TV$-norm minimization (among magnetizations generating the same field) uniquely recovers  planar magnetizations in the following cases: 
when the magnetization is carried by a collection of sufficiently separated line segments and a set that is purely 1-unrectifiable, or when a superset of the support is tree-like.  We note that such magnetizations can be recovered via $TV$-regularization schemes in the zero noise limit by taking the regularization parameter to zero.  This suggests definitions of sparsity in the present infinite dimensional context, that generate results akin to compressed sensing.
\end{abstract}

\maketitle
 
  \section{Introduction}
\label{intro}
This paper deals  with the structure of finite divergence-free
measures  in the  plane, and  applications thereof  to inverse magnetization
problems on thin plates. These are prototypical of
 inverse potential problems with source term in divergence form, and  have 
been 
the main motivation of the authors to develop  a purely measure-geometric
result like Theorem \ref{rep}. The latter asserts that
a planar divergence-free measure can be 
decomposed as a superposition of elementary ``loops'';
{\it i.e.}, unit tangent vector fields on rectifiable Jordan curves. This result is more precise  than the general structure 
theorem for solenoids  given by Smirnov in
\cite{Smi94} (valid in any dimension), and is  hinted at on page 843 of  that reference. Because divergence-free distributions in the plane
are rotations by $\pi/2$ of distributional gradients, one is quickly left to decompose gradients of
``homogeneous'' $BV$-functions; {\it i.e.}, locally integrable functions whose 
partial derivatives are finite measures. 
To do this, we combine a version of the co-area formula for
homogeneous $BV$-functions (Theorem \ref{coarea})
with a decomposition into Jordan curves of the 
measure-theoretic boundary of planar
sets of finite perimeter given in  \cite{Ambetal}. The latter
is a special case of
the decomposition of 1-dimensional  integral currents into 
indecomposable elements \cite[4.2.25]{FedererBook}, in which the 
pattern of orientations has special structure. To handle measurability issues 
in the integral expressing the decomposition of a divergence-free measure
as a superposition of loops, and to relate the tangent fields of the loops to the polar decomposition of the measure, we also 
establish (in any dimension) an approximate continuity property of measure-theoretic 
connected components of suplevel sets
for homogeneous $BV$-functions (Theorem \ref{approxceq}), which is 
interesting in its own right.

The loop decomposition of planar divergence-free measures has interesting 
applications to inverse magnetization problems for thin plates, when
magnetizations are modelled by $\R^3$-valued measures  supported on a set $S$ (in the thin plate case, $S\subset \R^2$).
Then, the inverse magnetization  problem consists 
in recovering such a measure, say $\bs{\mu}$, from knowledge of
the magnetic field $\bb{b}(\bs{\mu})$ that it generates, see Section 
\ref{SPOR} for details.
Magnetizations
supported in a plane generate the zero magnetic field if and only if
they are tangent to that plane and divergence-free there (see Lemma~\ref{2Dsole}). Thus, the 
kernel of the forward operator mapping $\bs{\mu}$ to
$\bb{b}(\bs{\mu})$ consists precisely of planar divergence-free measures 
in this case. The loop decomposition gives insight
on the structure of this kernel, enabling us to give
sufficient conditions for a magnetization to be  {\em $TV$-minimal  on $S$}; {\it i.e.},   the magnetization has  minimum total variation
among those magnetizations supported on $S$ that generate the same field. When a $TV$-minimal magnetization on $S$ is unique among magnetizations generating the same field, we   call it {\em strictly $TV$-minimal 
on $S$}. By standard
regularization theory, strictly $TV$-minimal magnetizations can be recovered by 
solving a sequence of minimization problems for the
so-called regularizing  functional, which
is the sum of the quadratic residuals
and a penalty term consisting of the product of a regularization parameter $\lambda>0$ and the total variation of the 
unknown, see \eqref{defcrit0}. Then, any sequence of  minimizers of the regularizing functional
converges weak-$*$ to the strictly $TV$-minimal measure generating the data
(when it exists), as the regularizing parameter and the noise tend jointly to 
zero in a suitable manner, see {\it e.g.} \cite{BurOsh}. In short: regularizing schemes that penalize the total variation are consistent to recover strictly
$TV$-minimal magnetizations, and thus, 
any assumption ensuring strict $TV$-minimality gives rise to a consistency result. 
For the larger class of magnetizations 
supported
on a slender set $S$ (see Section \ref{SPOR} for a definition), such a consistency result is obtained in \cite[Theorem~2.6]{BVHNS} by showing, using 
reference \cite{Smi94}, that magnetizations supported on a purely 1-unrectifiable set are
strictly $TV$-minimal. Specializing  to the case of planar $S$ and appealing to the loop decomposition 
will allow us  to obtain
more general conditions, proving for instance that magnetizations carried by 
the union of a purely 1-unrectifiable set and a collection of sufficiently separated line segments are strictly $TV$-minimal (Corollary~\ref{Cor5.4}  and Theorem~\ref{Thm6.2}). 

The  results just mentioned are reminiscent
of compressed sensing, where  underdetermined 
systems of linear equations in $\R^n$ are approximately solved
by minimizing the 
residuals while penalizing the $l^1$-norm. This favors
the recovery of sparse 
solutions ({\it i.e.} solutions having
a large number of zero components) when they exist, see {\it e.g.}
\cite{FouRau} and \cite{BreCar}. 
In this connection, the gist of \cite[Theorem~2.6]{BVHNS} and 
its sharpening described above 
for the planar case is to define
notions of ``sparsity'' in the 
present, infinite-dimensional context.
Our results warrant the use of  regularizing schemes that penalize the total variation
(a natural analog of the $l^1$-norm), in order
to recover magnetizations which are sparse according such definitions.

Our second application of the loop decomposition to inverse magnetization 
problems on thin plates is to prove that, for each value of the 
regularization parameter,  the minimizer of the regularizing
functional is unique (Theorem \ref{uniqueness}).
This result is important for algorithmic 
approaches to the inverse magnetization problem, because it tells us that
for every choice of the regularization parameter 
there is a unique estimate of the unknown magnetization based on
the regularization scheme \eqref{crit0}. It is also surprising,
for in the case that a  magnetization is $TV$-minimal, but not strictly $TV$ minimal, one would rather
expect the regularizing  functional to have several minimizers, at least 
for small values 
of the regularizing parameter.  

To conclude this introduction,
let us stress that magnetizations supported in a plane are commonly considered 
in   paleomagnetic studies,  where thin slabs of rock are modeled by
planar regions \cite{IMP,KW,WLFB,LWBHS}.  
It would be interesting to carry over 
the contents of the present paper to more general slender surfaces 
in $\R^3$ than the plane, as the results could apply to other situations 
in geosciences or medical imaging. 
In practice, the development of numerically 
effective algorithms for these inverse problem raises delicate issues of discretization.  Such considerations are not addressed in this paper, but will be taken up in future work.

{\bf Note:} After they completed the present research, the authors became aware of the interesting   manuscript \cite{BG}
where the loop decomposition of planar divergence-free measures is proven by different methods. Still, the result there cannot be substituted right away 
for Theorem 
\ref{rep} of this paper, because we establish specific properties of the 
representing measure $\rho$ in \eqref{eq:intrep} that we use in Section \ref{AIP} 
for the inverse magnetization problem, and cannot be found in \cite{BG}.  We also mention that an earlier version of Theorem 
\ref{rep} already appears in the 2019 thesis \cite{VillPhDThesis}. 

\subsection{Background and Overview of Results}
\label{SPOR}
Let us  first describe the inverse magnetization problem,  which 
serves as a motivation for the results to come.
For a closed subset   $S\subset \R^3$, let $\mathcal{M}(S)$ denote the space of
finite signed Borel measures supported on $S$.   We 
shall use the space $\mathcal{M}(S)^3$ of $\R^3$-valued measures supported on $S$ to model physical magnetizations distributed on $S$ and shall often use ``magnetization on $S$'' interchangeably with ``element of $\mathcal{M}(S)^3$''.   For $\bs{\mu}\in \mathcal{M}(S)^3$, we let $|\bs{\mu}|$ denote the 
{\em total variation measure} of $\bs{\mu}$. The latter is a positive measure, and we put   $\|\bs{\mu}\|_{TV}:= |\bs{\mu}|(\R^3)$ for the {\em total variation} of $\bs{\mu}$, see Section~\ref{notprl}.

The {\em magnetic field} $\bb{b}(\bs{\mu})$ generated by
a magnetization $\bs{\mu}\in \mathcal{M}(S)^3$  is defined, at a point 
$x$ not in the support of 
$\bs{\mu}$,  in terms of the {\em scalar magnetic potential} $\Phi(\bs{\mu})$ by  (see \cite{Jac1975}):
\begin{equation}
\label{bDef1}
\bb{b}(\bs{\mu})(x) =-\mu_0\grad \Phi(\bs{\mu})(x), \qquad x\not\in \text{supp } \bs{\mu},
\end{equation}
where  $\mu_0$ is the {\em magnetic constant} and $\nabla$ indicates the gradient.  Here, $\Phi(\bs{\mu})(x)$ is given by
\begin{equation}\label{PhiDef}
\Phi(\bs{\mu})(x):=\frac{1}{4\pi}\int \grad_y\frac{1}{|x-y|}\cdot d\bs{\mu}(y)=\frac{1}{4\pi}\int \frac{x-y}{|x-y|^3}\cdot d\bs{\mu}(y),
\end{equation} 
where,  for $x, y\in \R^3$, $x\cdot y$ and $|x |$ denote the Euclidean scalar product and norm  and $\grad_y$  the gradient with respect to $y$. 
  Clearly,
$\Phi(\bs{\mu})$ and the components of
$\bb{b}(\bs{\mu})$ are harmonic functions on $\R^3\setminus S$.  
Moreover, formula \eqref{PhiDef} defines $\Phi(\bs{\mu})$ on the whole 
of $\R^3$ as a member of
$L^2(\R^3)+L^1(\R^3)$ (see \cite[Proposition 2.1]{BVHNS})  so that $\bb{b}(\bs{\mu})$, initially defined on $\R^3\setminus S$,  extends
to a $\R^3$-valued divergence-free  distribution on $\R^3$. Indeed,
we may write
\begin{equation} \label{Phibextension}
\Delta \Phi=\div \bs{\mu} \quad  \text{ and } \quad \bb{b}(\bs{\mu})=\mu_0\left(\bs{\mu}-\grad\Phi(\bs{\mu})\right),
\end{equation} 
where $\div \bs{\mu}$ indicates the divergence of $\bs{\mu}$.
%
Note  that \eqref{Phibextension} yields a Helmholtz-Hodge decomposition of 
$\bs{\mu}$, as the sum of a gradient and a divergence-free distribution.
However, neither term is  a measure in general but rather a distribution 
of order $-1$.

The inverse magnetization  problem is to
recover $\bs{\mu}$ from measurements of  $\bb{b}(\bs{\mu})$ taken on a set $Q\subset\R^3 \setminus S$ which, due to the oriented nature  of sensors (coils), 
are usually observed in one direction only, say along 
some 
unit vector $v\in\R^3$. We assume for simplicity that $v$ is the same at
each measurement point. For instance, it is so in usual 
  Scanning Magnetic Microscopy experiments (SMM) where data consist of 
point-wise values of the normal component of the magnetic field on a 
planar region not intersecting $S$, see
\cite{KW,WLFB,LWBHS}. 
Geometric conditions on 
$Q$, $S$ and $v$, ensuring that such measurements  
  suffice to  determine
$\bb{b}(\bs{\mu})$ in the entire region $\R^3\setminus S$, 
are given   in \cite[Lemma 2.3]{BVHNS}, and recalled
for convenience when $S$ is planar in Section \ref{MtoFop}
further below.  In the remainder of this introduction, we assume that these assumptions are satisfied. 

Still, the mapping $\bs{\mu}\to \bb{b}(\bs{\mu})$ is generally not injective,
which is a major  difficulty with this inverse problem. 
In this connection, we say that  $\bs{\mu},\bs{\nu}\in\mathcal{M}(S)^3$ are {\em $S$-equivalent} if  $\bb{b}(\bs{\mu})$ and $\bb{b}(\bs{\nu})$ agree on $\R^3\setminus S$.
A magnetization $\bs{\mu}$ is said to be {\em $S$-silent}   if $\bs{\mu}$ is $S$-equivalent to the zero magnetization;
i.e., if $\bb{b}(\bs{\mu})$ vanishes on $\R^3\setminus S$.   

Since no nonzero harmonic
function lies in $L^2(\R^3)+L^1(\R^3)$,
it follows  from \eqref{Phibextension} that
a divergence-free magnetization 
is $S$-silent.   A partial converse is given in \cite[Theorem~2.2]{BVHNS}, namely
a $S$-silent magnetization is divergence-free
provided that $S$ is  \emph{slender}, meaning  it has Lebesgue measure  
zero and each 
connected component of $\R^3\setminus S$ has infinite Lebesgue measure. 
The slenderness assumption 
is a strong one: for instance it rules out
the case where $S$ is a volumic sample or a closed surface. However, it is satisfied
in important special cases, for example in paleomagnetic studies, 
as mentioned already,  or in Geomagnetism where some regions
of the Earth's crust are assumed to be non-magnetic (or much less magnetic) 
than the
others \cite{Gerhards2015}, or even in Electro-Encephalography where 
sources of primary current are often considered to lie  on the surface of the encephalon
(which is closed and therefore not slender)
but their support should arguably leave out
the brain stem connecting to the spinal cord (therefore the support is
contained in a slender set).

 In \cite{Smi94}, Smirnov   describes divergence-free measures 
 in $\R^n$, also known as {\em solenoids}, in terms of integrals of 
elementary components that are absolutely continuous with respect to 
1-dimensional 
Hausdorff measure $\mathcal{H}^1$.   
Consequently, if $S$ is slender and $\bs{\mu}\in\mathcal{M}(S)^3$  
is such that there is a \emph{purely 1-unrectifiable} set ({\it i.e.}, whose intersection 
with any 1-rectifiable set has $\mathcal{H}^1$- measure zero, 
see \cite{Mattila}) of full $|\bs{\mu}|$ measure, then $\bs{\mu}$
is mutually singular to every $S$-silent magnetization and so has minimum total variation amongst all magnetizations that are $S$-equivalent to $\bs{\mu}$.    
This observation led the authors in \cite{BVHNS}
to consider the following extremal 
problem involving 
 the quantity $M_S(\bs{\mu})$, defined for $\bs{\mu} \in \mathcal{M}(S)^3$ by
\begin{equation*}\label{defmineq}
M_S(\bs{\mu}):=\inf \{ \|\bs{\nu}\|_{TV}\colon \bs{\nu} \text{ is $S$-equivalent to  }{\bs{\mu}}\}. 
\end{equation*}

 \begin{prb}
Given $\bs{\mu}_0\in \mathcal{M}(S)^3$, find $\bs{\mu}$ that is $S$-equivalent to  $\bs{\mu}_0$   satisfying  $$\|\bs{\mu}\|_{TV}=M_S(\bs{\mu}_0).$$
\end{prb}
A solution to Extremal Problem 1 is, by definition, $TV$-minimal on $S$ and is strictly $TV$-minimal on $S$ if this solution is unique.
When  $S\subset \R^3$ is slender and  $\bs{\mu}_0\in\mathcal{M}(S)^3$, we find that $\bs{\mu}_0$ is strictly   $TV$-minimal on $S$ for the  three cases listed below.  Here case (a) is essentially \cite[Theorem~2.6]{BVHNS} and a special case of Theorem~\ref{Thm6.2} to come,  while
(b) is contained in \cite[Theorem~2.11]{BVHNS} and (c) follows from 
Corollary~\ref{Cor5.4} further below. 
\begin{itemize}
\item[(a)]   there is a purely 1-unrectifiable set   of full $|\bs{\mu}_0|$ measure;
\item[(b)] the set $S$ is a  finite disjoint union of compact sets $S_1, \ldots S_k$ and $$\bs{\mu}_0\mathcal{b}_{S_i}={\bf u}_i|\bs{\mu}_0|\mathcal{b}_{S_i},$$
for some set of unit vectors  ${\bf u}_1, \ldots, {\bf u}_k\in \R^3$, in which case we say  $\bs{\mu}_0$ is {\em piecewise unidirectional};
 \item[(c)] $ \bs{\mu}_0$ has a carrier contained in a countable union   of coplanar disjoint line segments $L_k$  such that the distance from any $L_k$ to any $L_j$, $j\ne k$, is greater than or equal to $\mathcal{H}^1(L_k)$.
\end{itemize}
Corollary~\ref{Cor5.4} also implies that (a) can be combined (c),
namely if a measure satisfies (c) and we add to it a measure on $S$ carried by a purely 1-unrectifiable set, 
then we get a measure which is strictly $TV$-minimal again.
Now, for $\rho$ a positive measure  on $Q$, let $A:\mathcal{M}(S)^3\to L^2(Q,\rho)$ be the \emph{forward operator} 
 mapping $\bs{\mu}$ to the restriction of $\bb{b}(\bs{\mu})\cdot v$ on $Q$ (see \eqref{Adef}).  The measure $\rho$ does not  play a significant role in what follows (e.g., it could be chosen to be Lebesgue measure on $Q$), but  it is  important for practical applications. 
To recover solutions of Extremal Problem 1 knowing the restriction
$f$ of $\bb{b}(\bs{\mu}_0)\cdot v$ to $Q$,
the theory of regularization for 
convex 
problems \cite{BurOsh} 
 suggests  to  minimize with respect to $\bs{\mu}\in\mathcal{M}(S)^3$
the    functional 
\begin{equation}
\label{defcrit0}
\mathcal{F}_{f,\lambda}(\bs{\mu}):=
\|f-A\bs{\mu}\|_{L^2(Q,\rho)}^2+\lambda \|\bs{\mu}\|_{TV}
\end{equation}
for some  suitable value of the \emph{regularization parameter} $\lambda>0$.
That is, we consider:
\begin{prb} Given   $f\in L^2(Q)$ and $\lambda>0$, find $\bs{\mu}_\lambda\in\mathcal{M}(S)^3$ such that
\begin{equation}
\label{crit0}
\mathcal{F}_{f,\lambda}(\bs{\mu_\lambda})=\inf_{\bs{\mu}\in\mathcal{M}(S)^3} \mathcal{F}_{f,\lambda}(\bs{\mu}).
\end{equation}
\end{prb}
When $Q$ and $S$ are positively separated, the existence of at least one minimizer  is a consequence of the weak-$*$ compactness of the unit ball in $\mathcal{M}(S)^3$ see {\it e.g.} \cite[Proposition~3.6]{BrePikk}.
Solving Extremal Problem 2 is a particular {\it regularization scheme} 
for the 
Inverse Magnetization Problem,  namely one that penalizes the total 
variation of the unknown.

It is standard that
if $f=A\bs{\mu}_0$ and $\lambda_n\to0$,
then any subsequence of $\bs{\mu}_{\lambda_n}$ has a subsequence 
converging weak-$*$ to a solution of Extremal Problem 1. To account for 
measurement noise, one usually replaces $f$ by $f_n=A\bs{\mu}_0+e_n$, and then
the same result  holds for a sequence $\bs{\mu}_n$ minimizing \eqref{defcrit0} 
with $f=f_n$ and $\lambda=\lambda_n$,
provided that both $\lambda_n$ and $\|e_n\lambda_n^{-1/2}\|_{L^2(Q,\rho)}$ 
tend to $0$, see \cite[Theorems~2\&5]{BurOsh} or
 \cite[Theorems~3.5\&4.4]{HoKaPoSch}.
In particular, if there is a unique solution $\bs{\mu}_0$ of
Extremal Problem 1, then we get weak-$*$ convergence of $\bs{\mu}_{n}$ to $\bs{\mu}_0$. A stronger result, involving weak-$*$ convergence of the total variation 
measure  $|\bs{\mu}_n|$, can be found in \cite[Theorem 4.3]{BVHNS}.
To recap, we have a consistency property  asserting that a magnetization 
meeting a certain assumptions 
({\it e.g.} either (a), (b) or (c) above) can be approximately
recovered {\it via} the regularization scheme \eqref{crit0},
when the noise is small and the regularization parameter $\lambda$ is 
chosen  small but still larger than the square of the noise
(the so-called Morozov discrepancy principle). 
Note that \eqref{crit0} may {\it a priori} have several minimizers,
for the total variation norm is 
not strictly convex  and 
the kernel of $A$ is nontrivial, whence
 the objective function \eqref{defcrit0}
is not strictly convex either as is easy to see.   


In Section \ref{AIP}, we analyze Extremal Problems 1 and 2 further 
in the case where $S$ is contained in a plane. 
We prove that $\bs{\mu}=\bs{\mu}_0$ is the unique solution to Extremal Problem 1 in case (c) listed above (Theorem \ref{Thm6.3}), and also 
that Extremal Problem 2  has a unique solution  for any data
(Theorem \ref{uniqueness}).

Both results depend on Theorem \ref{rep},
asserting that a two-dimensional divergence-free measure $\bs{\nu}$
can be decomposed 
into loops, {\it i.e.} contour integrations along rectifiable Jordan curves,
in such a way that the Radon-Nykodim derivative $d\bs{\nu}/d|\bs{\nu}|(x)$ is
essentially the unit tangent to any of these curves through $x$.
The proof of the latter occupies Section \ref{loppdecSM}, after  some 
preparation in Section \ref{BVdsec} where we recall the co-area formula 
and show approximate continuity of suplevel 
sets of 
homogeneous $BV$-functions. Section \ref{sparse3DSec} describes relevant results from 
\cite{Smi94}, while  Appendix \ref{sec:appendix}  gathers technical facts
connected to the latter.

\subsection{Notation}
\label{notprl}

We conclude this section with some notation and definitions
regarding measures and distributions.  For a vector $x$ in the Euclidean space $\R^n$ (we mainly deal with $n=2$ or 3),  we denote the $j$-th component of $x$ by $x_j$ and   the partial derivative with respect to $x_j$ by $\partial_{x_j}$.  By default, we consider vectors  as column vectors; e.g., for $x\in \R^3$ we write 
 $x=(x_1,x_2,x_3)^T$ where ``$T$'' denotes ``transpose''.
We write $\NatZer$ for the nonnegative integers, $\NatOne$ for the positive integers, and $\R^+$ for the nonnegative real numbers.
We  use bold symbols to represent vector-valued functions and measures, and   the corresponding nonbold symbols with subscripts to denote the respective components; e.g., $\bs{\mu}=(\mu_1, \mu_2, \mu_3)^T$ or $\bb{b}(\bs{\mu})=(b_1(\bs{\mu}),b_2(\bs{\mu}), b_3(\bs{\mu}))^T$. 
For $x\in\R^n$ and $R>0$, we let $\B(x,R)$ indicate the open ball centered at $x$ with radius $R$, and $\sph(x,R)$ the boundary sphere. 
This notation does not show dependence on $n$, but no confusion should arise.
We denote by  $\mathcal{M}(E)$ the space of finite signed 
measures on $E\subset\R^n$. 

 We write $\chi_E$ for the characteristic function of a set $E$ and
$\delta_x$ for the Dirac delta measure at $x$. 
Given a $\R^m$-valued measure in $\bs{\mu}\in\mathcal{M}(\R^n)^m$ and a 
Borel set $E\subset\R^n$, we denote by $\bs{\mu}\mathcal{b}E$ the measure obtained by restricting  $\bs{\mu}$ to $E$ ({\it i.e.} for every Borel set $B\subset\R^n$, $\bs{\mu}\mathcal{b}E(B):=\bs{\mu}(E\cap B))$.

 For $\bs{\mu}\in  \mathcal{M}(\R^n)^m$, the {\em total variation measure} $|\bs{\mu}|$  is defined on Borel sets $B\subset \R^n$ by
\begin{equation}\label{totVarMeasDef}
|\bs{\mu}|(B):=\sup_{\mathcal{P}} \sum_{P\in \mathcal{P}} |\bs{\mu}(P)|, 
\end{equation}
where the supremum is taken over all finite Borel partitions $\mathcal{P}$ of $B$. The {\em total variation norm of $\bs{\mu}$} is then defined as
\begin{equation}\label{TVnormdef}\|\bs{\mu}\|_{TV}:=|\bs{\mu}|(\R^n).\end{equation}  
The support  of  $\bs{\mu}$ ({\it i.e.} the complement of the
largest open set $U$ such that 
$|\bs{\mu}|(U)=0$) is denoted as $\text{\rm supp}\,\bs{\mu}$.
Since $|\bs{\mu}|$ is a Radon measure, the Radon-Nikodym derivative $\bb{u}_{\bs{\mu}}:=d\bs{\mu}/d{|\bs{\mu}|}$ exists as a $\R^m$-valued $|\bs{\mu}|$-integrable function  and it satisfies $|\bb{u}_{\bs{\mu}}|=1$ a.e.  with respect to $|\bs{\mu}|$.

For $\Omega\subset\R^n$ an open set, we denote by $C_c(\Omega,\R^m)$ 
the space of $\R^m$-valued continuous  functions with compact support on 
$\Omega$, equiped with the sup-norm.
When $m=1$, we drop the dependence on $m$  and
simply write $C_c(\Omega)$. A similar notational simplification 
is used for other functional spaces introduced below.

We shall identify $\bs{\mu} \in \mathcal{M}(\R^n)^m$ with the linear form on 
$C_c(\R^n,\R^m)$ given by 
\begin{equation}
\label{defmesf}\langle \bs{\mu},\bb{f}  \rangle := \int \bb{f} \cdot d\bs{\mu}, \qquad \bb{f}\in C_c(\R^n,\R^m).\end{equation}

The norm of the functional
\eqref{defmesf},  is $\|\bs{\mu}\|_{TV}$. More generally, for 
$\Omega\subset\R^n$
an open set, it follows from 
Lusin's theorem \cite[Cor. to Theorem 2.23]{Rudinrca},
applied to the restriction of $\bb{u}_{\bs{\mu}}$ to
``large'' compact sets in $\Omega$,
and from the dominated convergence theorem that 
\begin{equation}
\label{rouv}
|\bs{\mu}|(\Omega)=\sup\{\langle\bs{\mu},\bs{\varphi}\rangle,\,
\bs{\varphi}\in C_c(\Omega,\R^m),\,|\bs{\varphi}|\leq1\}.
\end{equation}
The functional  \eqref{defmesf}
extends naturally with the same norm to the Banach space 
$C_0(\R^n,\R^m)$ of $\R^m$-valued continuous functions on $\R^n$ vanishing at infinity.

At places, we also identify $\bs{\mu}$ with the restriction of \eqref{defmesf}
to $C^\infty_c(\R^n,\R^m)$, 
the space of $C^\infty$-smooth functions with compact support,
equiped with the usual topology of test functions
\cite{Schwartz}.   
We refer to a continuous 
 linear functional on $C^\infty_c(\R^n,\R^m)$ as being
a distribution, and put $\partial_{x_i}$ to mean distributional 
derivative with respect to the variable $x_i$.

We denote     Lebesgue measure on $\R^n$ by $\mathcal{L}_n$ and $d$-dimensional Hausdorff measure by $\mathcal{H}^d$, see \cite{evans_gariepy_2015} for the 
definitions.  We normalize $\mathcal{H}^d$    for $d=1$ and $2$ so that it coincides with  arclength and   surface area for smooth curves and surfaces, 
 and more generally that it agrees with $d$-dimensional volume for nice $d$-dimensional subsets  of $\R^n$.  We denote   the Hausdorff dimension of a set $E$ by $\dim_{\mathcal{H}}(E)$.  
We say that $E\subset R^n$ is $m$-\emph{rectifiable} if it is the countable union of images of Lipschitz functions from $\R^m$ to $\R^n$, up to a set of $\mathcal{H}^m$-measure zero, see \cite[Def. 15.3]{Mattila}.

For $E\subset\R^n$ a measurable set and $1\leq p\leq\infty$, 
we write $L^p(E)$ for the 
familiar Lebesgue space 
of (equivalence classes of $\mathcal{L}_n$-a.e. coinciding) real-valued  
measurable 
functions on $E$ whose $p$-th power is integrable, with norm
$\|g\|_{L^p(E)}=(\int_E|g|^pd\mathcal{L}_n)^{1/p}$ ($\textrm{ess. sup}_E\,|g|$
if $p=\infty$). If $E$ is open, we set $L^1_{loc}(E)$ to consist
of functions $f$ whose restriction $f_{|K}$ to $K$ 
lies in $L^1(K)$, for every compact
$K\subset E$. Since $E=\cup_n K_n$ with $K_n$ compact,
$L^1_{loc}(E)$ is a Fr\'echet space for the distance
$d_1(f,g)=\sum_n2^{-n}\|f-g\|_{L^1(K_n)}/(1+\|f-g\|_{L^1(K_n)})$. 
For $\nu\in\mathcal{M}(\R^n)$ a positive measure different from
$\mathcal{L}_n$, we put 
$L^1[d\nu]$ for the space of real-valued
integrable functions against $\nu$.

We are particularly concerned with magnetizations supported on $\R^2\times \{0\}\subset\R^3$ and hence, with a slight abuse of notation, given  $S\subset \R^2$ and  $\bs{\mu}\in \mathcal{M}(S\times\{0\})^3$, we shall identify $S$ with $S\times\{0\}\subset\R^3$ and $\bs{\mu}$ with $\bs{\mu}\mathcal{b}(\R^2\times\{0\})$.
In addition, we let $\mathfrak{R}$ denote the rotation by $\pi/2$ in $\R^2$; 
i.e., $\mathfrak{R}((x_1,x_2)^T)=(-x_2,x_1)^T$.

For an open set $\Omega\subset \R^n$, recall the space $BV(\Omega)$ of functions of {\em bounded variation} comprised of  functions in $L^1(\Omega)$
whose distributional derivatives are signed measures on $\Omega$ (see, \cite{Ziemer}).  We let 
$BV_{loc}(\Omega)$ denote the space of functions whose restriction to
any relatively compact open subset $\Omega_1$ of $\Omega$ lies in $BV(\Omega_1)$.
We define the space $\dot{BV}(\Omega)$ of ``homogeneous'' BV-functions
to consist of locally integrable functions whose distributional derivatives are finite signed measures on $\Omega$.
 Note that $\phi\in\dot{BV}(\Omega)$ if and only if it is a distribution on $\Omega$ such that 
$\grad\phi\in \mathcal{M}(\Omega)^n$, by \cite[Theorem 6.7.7]{Dem2012}.
If $\phi\in \dot{BV}(\Omega)$, we see  from  \eqref{rouv}
by mollification that
\begin{equation}
\label{TVviadiv}
\|\grad\phi\|_{TV}=\sup_{\varphi\in C_c^1(\Omega,\R^n),|\varphi|\leq1}\int
\varphi\cdot d(\grad\phi)=\sup_{\varphi\in C_c^1(\Omega,\R^n),|\varphi|\leq1}\int
\phi \,\div\varphi \,d\mathcal{L}_2,
\end{equation}
where $C_c^1(\Omega,\R^n)$ denotes the space of $\R^n$-valued continuously differentiable functions with compact support in $\Omega$, see \cite[Ch. 5]{evans_gariepy_2015}.


\section{Divergence-free measures on $\R^n$}
\label{sparse3DSec}
We recall in this section the 
decomposition of divergence-free measures into
elementary components obtained in \cite{Smi94}.
We also point at  additional properties of the elementary components,
the proofs of which are appended
in Appendix \ref{sec:appendix} to streamline 
the exposition.
\subsection{Curves as measures}
\label{sec:cam}
For $a<b$ two real numbers, we call a Lipschitz mapping  $\bs{\gamma}:[a,b]\to \R^n$  a {\em  parametrized rectifiable curve}, while the image $\Gamma:=\bs{\gamma}([a,b])$ 
is simply termed a  (non-parametrized) \emph{rectifiable curve}.
 By   Rademacher's Theorem (see \cite{evans_gariepy_2015}),   $\bs{\gamma}$ is differentiable a.e. on $[a,b]$. 
Note that $\bs{\gamma}$ needs not be injective, {\it i.e.} the curve needs not be simple. If we let $N(\bs{\gamma},x)$  
be the 
cardinality (finite or infinite) of the preimage $\bs{\gamma}^{-1}(x)$, 
then the length $\ell(\bs{\gamma})$ of $\bs{\gamma}$ is  
\begin{equation}
\label{areas}
\ell(\bs{\gamma}):=\int_a^b |\bs{\gamma}^\prime(t)|\,dt=\int N(\bs{\gamma},x) \,d\mathcal{H}^1(x),
\end{equation} 
where the second equality follows from the area formula 
\cite[3.2.3]{FedererBook}.
In particular, $\mathcal{H}^1(\Gamma)<\infty$ and
$\mathcal{H}^1$-almost every $x\in\Gamma$ is attained only
finitely many times by $\bs{\gamma}$. 
Observe that $\ell(\bs{\gamma})\neq\mathcal{H}^1(\Gamma)$ in general.
When
$|\bs{\gamma}'(t)|=1$ a.e. on $[a,b]$,
we call $\bs{\gamma}$  
a unit speed parametrization.
This means that $\bs\gamma$ parametrizes $\Gamma$ (non injectively perhaps)
by  percursed arclength. 

If $\bs\gamma$ is injective on $[a,b)$ 
and  $\bs\gamma(a)=\bs\gamma(b)$, 
we say that $\bs\gamma$ is a parametrized rectifiable Jordan curve 
and $\Gamma$ a rectifiable Jordan curve;
 in this case $\ell(\bs{\gamma})=\mathcal{H}^1(\Gamma)$. 
Given a Jordan curve $\Upsilon$ ({\it i.e.} the image of a circle 
by an injective continuous map)
 such that $\mathcal{H}^1(\Upsilon)<\infty$, 
one can easily construct
a unit speed parametrization $\bs\gamma:[0,\mathcal{H}^1(\Upsilon)]\to \Upsilon$ which is
injective on $[0,\mathcal{H}^1(\Upsilon))$ with 
$\bs{\gamma}(0)=\bs{\gamma}(\mathcal{H}^1(\Upsilon))$.
Thus, a Jordan curve $\Upsilon$ is rectifiable if and only if 
$\mathcal{H}^1(\Upsilon)<\infty$. 

For $\bs\gamma:[a,b]\to\R^n$  a parametrized rectifiable curve,
we define $\mathbf{R}_{\bs{\gamma}}\in \mathcal{M}(\R^n)^n$ by
\begin{equation}\label{Rgamma}
\langle \mathbf{R}_{\bs{\gamma}},\mathbf{g}\rangle 
:=\int_a^b \mathbf{g}(\bs{\gamma}(t))\cdot\bs{\gamma}'(t) dt=\int_{\Gamma}\left( \sum_{t\in\bs{\gamma}^{-1}(x)}\bb{g}(x)\cdot\bs{\gamma}^\prime(t) \right)d\mathcal{H}^1(x),\qquad
\mathbf{g}\in C_0(\R^n)^n,
\end{equation}
where the second equality follows from the area formula.
Clearly, $ \mathbf{R}_{\bs{\gamma}}$ is supported on $\Gamma$ and
$\|\mathbf{R}_{\bs{\gamma}}\|_{TV}\leq \ell(\bs{\gamma})$.
If we define $\psi:[a,b]\to[0,\ell(\bs{\gamma})]$  by
$\psi(t)=\int_a^t|\bs{\gamma}^\prime(\tau)|d\tau$, then
$\psi$ is Lipschitz with $\psi^\prime(t)=|\bs{\gamma}^\prime(t)|$ a.e. 
and there is  a unit speed parametrization
$\widetilde{\bs{\gamma}}:[0,\ell(\bs{\gamma})]\to \Gamma$ such that
$\bs{\gamma}=\widetilde{\bs{\gamma}}\circ\psi$, by the chain rule 
and Sard's theorem for Lipschitz functions (see \cite[Theorem~7.4]{Mattila}).
Moreover, we see from the area formula that 
$\mathbf{R}_{\bs{\gamma}}=\mathbf{R}_{\widetilde{\bs{\gamma}}}$,  so
we assume unless otherwise stated  that parametrized rectifiable curves are unit speed 
parametrizations.  

By Lemma \ref{RNcurve}, 
$\mathbf{R}_{\bs{\gamma}}$ is absolutely continuous with 
respect to $\mathcal{H}^1\mathcal{b}\Gamma$ and has 
Radon-Nykodim derivative 
$d\mathbf{R}_{\bs{\gamma}}/d(\mathcal{H}^1\mathcal{b}\Gamma)(x)=\sum_{t\in\bs{\gamma}^{-1}(x)} \bs{\gamma}^\prime(t)$ at
$\mathcal{H}^1$-a.e. $x\in\bs{\Gamma}$. Hence,
for every Borel set $B\subset\R^n$, we have that
\begin{equation}\label{Rgammappv}
\mathbf{R}_{\bs{\gamma}}(B) 
=
\int_{\Gamma\cap B}\left( \sum_{t\in\bs{\gamma}^{-1}(x)}
\bs{\gamma}^\prime(t) \right)d \mathcal{H}^1(x),\qquad
|\mathbf{R}_{\bs{\gamma}}|(B) 
=
\int_{\Gamma\cap B}\left| \sum_{t\in\bs{\gamma}^{-1}(x)}
\bs{\gamma}^\prime(t) \right|d \mathcal{H}^1(x).
\end{equation}

It may happen that $\|\bb{R}_{\bs\gamma}\|_{TV}<\ell(\bs{\gamma})$,
because cancellation can occur
in \eqref{Rgamma}.
To  discard such cases,
we consider for each $\ell>0$ the collection $\mathcal{C}_\ell$
of those $\mathbf{R}_{\bs{\gamma}}$ associated to
a parametrized rectifiable curve $\bs\gamma$ of length $\ell$ that satisfy
$\|\bb{R}_{\bs\gamma}\|_{TV}=\ell$.
By Lemma \ref{sCell}, we have that  
$\mathbf{R}_{\bs{\gamma}}\in\mathcal{C}_\ell$ if and only if 
$\Gamma$ has a well  
defined (oriented) unit tangent $\bs{\tau}(x)$ at $\mathcal{H}^1$-a.e. $x$,
given  by $\bs{\gamma}^\prime(t)$ for any $t$ such that $\bs{\gamma}(t)=x$.
In this case, we note that \eqref{Rgammappv} 
can be rewritten as
\begin{equation}\label{Rgammas}
\mathbf{R}_{\bs{\gamma}}(B) 
=
\int_{\Gamma\cap B}N(\bs{\gamma},x)\bs{\tau}(x)\,d \mathcal{H}^1(x),\qquad
|\mathbf{R}_{\bs{\gamma}}|(B) 
=
\int_{\Gamma\cap B}N(\bs{\gamma},x)\,d \mathcal{H}^1(x).
\end{equation}

\subsection{Decomposition of solenoids into curves}
\label{subsec:curves}
Since $\mathcal{M}(\R^n)^n$ is dual to $C_c(\R^n,\R^n)$ which is separable,
the closed ball $\mathcal{B}_\ell\subset \mathcal{M}(\R^n)^n$
centered at $0$ of radius $\ell$  is
a compact metrizable space  for 
the weak-$*$ topology. In particular,  
$\mathcal{C}_\ell$ equipped with the weak-$*$ 
topology is a (non complete) metric space.
Now, suppose that $\bs{\mu}\in\mathcal{M}(\R^n)^n$ is a solenoid,
{\it i.e.} that $\div\bs\mu=0$ 
(as a distribution).  Then, it follows from  \cite[Theorem A]{Smi94} that 
$\bs{\mu}$ can be decomposed into elements from $\mathcal{C}_\ell$, meaning there is a positive  finite Borel
measure $\rho$ on $\mathcal{C}_\ell$  such that, for $\rho$-a.e. $\bs{\gamma}$,
the measure $\bb{R}_{\bs\gamma}$ is
supported in $\textrm{supp}\,\bs{\mu}$  and
\begin{equation}\label{Smiw}
\langle\bs{\mu},\bb{g}\rangle=\int_{\mathcal{C}_\ell} \langle
\mathbf{R}_{\bs\gamma},\bb{g}\rangle d\rho(\mathbf{R}_{\bs\gamma}),
\qquad
\langle|\bs{\mu}|,\varphi\rangle=\int_{\mathcal{C}_\ell} \langle
|\mathbf{R}_{\bs\gamma}|,\varphi\rangle d\rho(\mathbf{R}_{\bs\gamma}),
\end{equation}
for all $\bb{g}\in  C_c^\infty(\R^n,\R^n)$ and $\varphi\in  C_c^\infty(\R^n)$. Of course, by mollification, it is clear that
\eqref{Smiw}  more generally holds for 
 $\bb{g}\in  C_c(\R^n,\R^n)$ and $\varphi\in  C_c(\R^n)$. 
By Lemma \ref{foncset}, the two equalities in 
Equation \eqref{Smiw} amount to say that,
for each Borel set $B\subset\R^n$,
\begin{equation}\label{Smi1}
\bs{\mu}(B)=\int_{\mathcal{C}_\ell} \mathbf{R}_{\bs\gamma}(B)\ d\rho(\mathbf{R}_{\bs\gamma}),
\qquad
|\bs{\mu}|(B)=\int_{\mathcal{C}_\ell} |\mathbf{R}_{\bs\gamma}|(B)\ d\rho(\mathbf{R}_{\bs\gamma}).
\end{equation}
We note that \eqref{Smi1} was used 
in the proof of \cite[Theorem 2.6]{BVHNS} without further
justification.

The representation \eqref{Smiw}
is far from unique: for instance $\ell>0$ was arbitrary.
Moreover, the
$\mathbf{R}_{\bs\gamma}$ need not be divergence-free 
even though $\bs{\mu}$ is; {\it i.e.}, the solenoid $\bs{\mu}$
gets decomposed {\it via} \eqref{Smiw} into elementary components 
$\mathbf{R}_{\bs\gamma}$ that may not be solenoids.
In this connection, observe that $\div \mathbf{R}_{\bs\gamma}=
\delta_{\bs{\gamma}(b)}-\delta_{\bs{\gamma}(a)}$ which 
vanishes if only if $\bs{\gamma}$ is a closed parametrized curve.
In the next section, 
we  discuss a more subtle  decomposition of $\bs{\mu}$, this time
into divergence-free components, which is established in  
\cite[Theorem B]{Smi94}. In a sense, it is obtained
by letting $\ell\to\infty$ in \eqref{Smiw}.

\subsection{Decomposition of solenoids into elementary solenoids}
\label{sec:decss}
In the terminology of \cite{Smi94}, an \emph{elementary solenoid} $\bb{T}_{\bb{f}}$ is 
a $\R^n$-valued measure associated to a Lipschitz function
$\bb{f}:\R\to\R^n$ with $|\bb{f}'(t)|\leq1$,
acting on $\bs{\varphi}\in C_c(\R^n)^n$  by the formula:
\begin{equation}
\label{SmirnovA}
\bb{T}_{\bb{f}}(\bs{\varphi})=\lim_{s\to+\infty}\frac{1}{2s}\int_{-s}^s \bs{\varphi}(\bb{f}(t))\cdot \bb{f}'(t)\,dt,
\end{equation}
where the existence of the limit is \emph{assumed} for every $\bs{\varphi}$
(for instance, it will exist
if $\bb{f}$ is periodic or quasi-periodic). In addition,
it is required that $\bb{f}(\R)\subset \mbox{\rm supp}\,\bb{T}_{\bb{f}}$ 
 and that
$\|\bb{T}_{\bb{f}}\|_{TV}=1$. 
Letting  $\bb{f}_s:=\bb{f}|_{[-s,s]}$, we get with the notation of 
Section 
\ref{sec:cam} that  $\bb{T}=*\lim\,\mathbf{R}_{\bb{f}_s}/(2s)$ as 
$s\to+\infty$,
where $*\lim$ indicates the weak-$*$ limit. 
It is clear from \eqref{SmirnovA} that
$\mbox{\rm supp}\,\bb{T}_{\bb{f}}\subset \overline{\bb{f}(\R)}$, therefore 
the condition  that $\bb{f}(\R)\subset \mbox{\rm supp}\,\bb{T}_{\bb{f}}$ 
really means  that $\mbox{\rm supp}\,\bb{T}_{\bb{f}}=\overline{\bb{f}(\R)}$.
By Lemma \ref{unitels}, we may assume without loss of generality
that $|\bb{f}'(t)|=1$ a.e. on $\R$ in the definition of $\bb{T}_{\bb{f}}$.
It is straightforward to check
that 
$\div\bb{T}_{\bb{f}}=0$, since  
$\bb{T}_{\bb{f}}(\grad\Psi)=\lim_s(\Psi(\bb{f}(s)-\Psi(\bb{f}(-s))/s=0$
for any $\Psi\in C_c^1(\R^n)$.
Hence, $\bb{T}_{\bb{f}}$ is indeed a solenoid.
We denote by  $\mathfrak{S}(\R^n)$ the set of elementary 
solenoids on $\R^n$. Since it is contained in
$\mathcal{B}_1$, the set 
$\mathfrak{S}(\R^n)$ is a metric space when 
endowed with the weak-$*$ topology. 

It is more difficult to describe members of
$\mathfrak{S}(\R^n)$ 
than members of $\mathcal{C}_\ell$, but still their
structure is reminiscent
of \eqref{Rgammappv} as we now indicate.
Indeed, putting $\Gamma_s=\bb{f}([-s,s])$ and  $N(\bb{f},x,s)$  for
the cardinality (finite or infinite)
of those $t\in[-s,s]$ such that $\bb{f}(t)=x$,  let us define the
\emph{normalized arclength}  of the parametrization $\bb{f}_s:[-s,s]\to\R^n$ 
to be the measure on $\R^n$ given by 
\begin{equation}
\label{prob}
 d\nu_s(x):=\frac{N(\bb{f},x,s)}{2s}d(\mathcal{H}^1\mathcal{b}\Gamma_s)(x). 
\end{equation}
From \eqref{areas}, we see that $\nu_s$ is a probability measure for each 
$s>0$, and by Lemma \ref{desces},
the family  $(\bs{\nu}_s)_{s>0}$ converges 
weak-$*$, as $s\to+\infty$, to the probability measure $|\bb{T}_{\bb{f}}|$.
Moreover, the Radon Nykodim derivative $\bb{u}_{\bb{T}_{\bb{f}}}$ extrapolates,
in a sense made precise in that lemma, a limit of 
averaged tangents to $\bb{f}(\R)$. 
For instance, if $\bb{g}_k$ is a sequence in $C_c(\R^n)$  such that $|\bb{g}_k|\leq1$ and
$\lim_k\bb{g}_k(x)= \bb{u}_{\bb{T}_{\bb{f}}}(x)$ for $|\bb{T}_{\bb{f}}|$-a.e.
$x\in\R^n$ (such a sequence exists by Lusin's theorem), then to any 
real sequence $s_k\to+\infty$ there is
a subsequence $s_{j(k)}$ such that (compare \eqref{limtan}):
\[\lim_{k\to\infty}\int\left|\bb{g}_k(x)-\frac{\sum_{t\in\bb{f}^{-1}(x),\,|t|\leq s_{j(k)}}\bb{f}^\prime(t) }{N(\bb{f},x,s_{j(k)})}\right|^2d\bs{\nu}_s(x)
=0.
\]
 A typical example is obtained when $\bb{f}$ is a line
winding on a torus with irrational slope. Then $|\bb{T}_{\bb{f}}|$
is the normalized area measure and  $\bb{u}_{\bb{T}_{\bb{f}}}$ is a continuous tangential vector field on the torus.

It is shown in \cite[Theorem B]{Smi94} that each 
$\bs{\mu}\in\mathcal{M}(\R^k)^k$ with  $\div \bs{\mu}=0$ can be 
expressed as
\begin{equation}
\label{Smirdecp}
\langle\bs{\mu},\bs{\varphi}\rangle=\int_{\mathfrak{S}(\R^k)} 
\langle \bb{T},\bs{\varphi}\rangle \,d\rho(\bb{T}),\qquad\bs{\varphi}\in C_c(\R^n,\R^n),
\end{equation}
for some positive Borel measure $\rho=\rho(\bs{\mu})$ on $\mathfrak{S}(\R^k)$,
in such a way that
\begin{equation}
\label{Smirdeca}
\langle|\bs{\mu}|,\bs{\varphi}\rangle=\int_{\mathfrak{S}(\R^k)} 
\langle|\bb{T}|,\bs{\varphi}\rangle\,d\rho(\bb{T}).
\end{equation}
 Arguing as in 
Lemma \ref{foncset}, one sees  that  \eqref{Smirdecp} and \eqref{Smirdeca}
together are equivalent to
\begin{equation}
\label{measonb}
\bs{\mu}(B)=\int_{\mathfrak{S}(\R^k)} \bb{T}(B)\,d\rho(\bb{T}),
\qquad |\bs{\mu}|(B)=\int_{\mathfrak{S}(\R^k)} |\bb{T}|(B)\,d\rho(\bb{T})
\end{equation}
for every Borel set $B$, in particular  
$\text{supp}\,\bb{T} \subset \text{supp}\,\bs{\mu}$ for $\rho$-a.e. 
$\bb{T}\in\mathfrak{S}(\R^k)$.
In \cite{Smi94}, the relations \eqref{Smirdecp} and \eqref{Smirdeca} are 
summarized 
by saying that a divergence-free measure can be completely decomposed into
elementary solenoids.

In dimension 3 already, the functions $\bb{f}$ 
giving rise to a well-defined measure $\bb{T}_{\bb{f}}$ {\it via}
\eqref{SmirnovA} can have rather complex 
behaviour, see examples in \cite[Sec. 1.3]{Smi94}. However, in dimension 2, 
the decomposition \eqref{Smirdecp} can be achieved using periodic
$\bb{f}$ parametrizing rectifiable Jordan curves:
this follows from Theorem \ref{rep} in Section \ref{loppdecSM}.
In this connection,
we note that if $\bb{f}:\R\to\R^n$
satisfies $|\bb{f}^\prime|=1$ a.e. and
is periodic of period $L>0$, then the limit in \eqref{SmirnovA}
does exist and in fact $\bb{T}_{\bb{f}}=\bb{R}_{\bs{\gamma}}/L$,
where $\bs{\gamma}:[0,L]\to\R^n$
is the restriction $\bb{f}_{|[0,L]}$. Clearly then, we have that
$\text{supp}\,\bb{T}_{\bb{f}}=\bs{\gamma}([0,L])=\bb{f}(\R)$,
and in order that
$\bb{T}_{\bb{f}}$  be an elementary solenoid it is necessary and sufficient 
that $\|\bb{T}_{\bb{f}}\|_{TV}=1$. This amounts 
to require that  $\|\bb{R}_{\bs{\gamma}}\|_{TV}=L$ or, equivalently,
that $\bb{R}_{\bs{\gamma}}\in \mathcal{C}_L$.
By the discussion after \eqref{Rgammappv},
this is the case when
$\bs{\gamma}([0,L])$ is a rectifiable Jordan curve.

$\R^3$-valued solenoids with planar  support are of particular significance for our applications.  The following elementary lemma, essentially contained in \cite{IMP}, gives  simple characterizations of such solenoids.  We include a proof for the convenience of the reader. Recall the definition of $\dot{BV}$
and the notation $\mathfrak{R}$ for the rotation by $\pi/2$ in $\R^2$.
\begin{lemma} 
\label{2Dsole}
Let $S\subset\R^2\times \{0\}$ be closed,  $\bs{\mu}=(\mu_1,\mu_2,\mu_3)^T\in \mathcal{M}(S)^3$, and $\bs{\mu}_T=(\mu_1,\mu_2)^T$.
The following are equivalent: 
\begin{enumerate}
 \item $\div \bs{\mu}=0$ in the distributional sense on $\R^3$. 
\item $\mu_3=0$ and $\div \bs{\mu}_T=0$ in the distributional sense on $\R^2$.
\item $\mu_3=0$ and $\bs{\mu}_T=\mathfrak{R}\grad \phi =(-\partial_{x_2}\phi ,\partial_{x_1}\phi)^T$ for some $\phi\in \dot{BV}(\R^2)$. 
\end{enumerate}
%
\end{lemma}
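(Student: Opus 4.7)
The plan is to establish the cyclic chain (1) $\Leftrightarrow$ (2) $\Leftrightarrow$ (3). The strategy for (1) $\Leftrightarrow$ (2) is to exploit the fact that $\bs{\mu}$ is supported on the hyperplane $\R^2\times\{0\}$, so that any pairing against $\varphi\in C_c^\infty(\R^3)$ only ``sees'' the trace of $\varphi$ and of $\partial_{x_3}\varphi$ on $\{x_3=0\}$. The strategy for (2) $\Leftrightarrow$ (3) is the classical equivalence between divergence-free planar vector fields and skew-gradients, which reduces to the Poincar\'e lemma for distributions.

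For (1) $\Rightarrow$ (2), I would test $\div\bs{\mu}=0$ against product functions $\varphi(x_1,x_2,x_3)=\psi(x_1,x_2)\eta(x_3)$ with $\psi\in C_c^\infty(\R^2)$ and $\eta\in C_c^\infty(\R)$. Because $\bs{\mu}$ is concentrated on $\{x_3=0\}$, the pairing collapses to $\eta(0)\,\langle\div\bs{\mu}_T,\psi\rangle_{\R^2}+\eta'(0)\int\psi\,d\mu_3$ (after identifying $\bs{\mu}$ with its planar image). Choosing $\eta$ with $\eta(0)=0$ and $\eta'(0)=1$ (e.g.\ $\eta(x_3)=x_3\chi(x_3)$ with a suitable cutoff $\chi$) forces $\int\psi\,d\mu_3=0$ for every $\psi$, hence $\mu_3=0$; then choosing $\eta(0)=1$ forces $\div\bs{\mu}_T=0$ in the planar distributional sense. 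The direction (2) $\Rightarrow$ (1) runs the same computation backwards: for any $\varphi\in C_c^\infty(\R^3)$, the pairing $\langle\bs{\mu},\grad\varphi\rangle$ involves only the restrictions of $\partial_{x_1}\varphi$, $\partial_{x_2}\varphi$ and $\partial_{x_3}\varphi$ to $\{x_3=0\}$; the first two combine into $\langle\bs{\mu}_T,\grad_{\R^2}\psi\rangle$ for $\psi:=\varphi(\cdot,\cdot,0)\in C_c^\infty(\R^2)$, which vanishes by $\div\bs{\mu}_T=0$, while the third vanishes because $\mu_3=0$.

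For (3) $\Rightarrow$ (2), I would simply compute $\div(\mathfrak{R}\grad\phi)=-\partial_{x_1}\partial_{x_2}\phi+\partial_{x_2}\partial_{x_1}\phi=0$ by Schwarz's equality of mixed partials in the distribution sense. For the nontrivial direction (2) $\Rightarrow$ (3), I set $\bs{\nu}:=\mathfrak{R}^{-1}\bs{\mu}_T=(\mu_2,-\mu_1)^T$ and observe that its distributional curl $\partial_{x_1}\nu_2-\partial_{x_2}\nu_1=-\div\bs{\mu}_T$ vanishes. A curl-free distribution on the simply connected domain $\R^2$ is a gradient (the Poincar\'e lemma for distributions, provable by convolution with a fundamental solution of the Laplacian, or by mollification followed by a normalization argument), so there exists a distribution $\phi$ on $\R^2$ with $\grad\phi=\bs{\nu}$. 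Since $\grad\phi=\bs{\nu}$ is a finite $\R^2$-valued measure, the criterion cited in the preliminaries (\cite[Theorem 6.7.7]{Dem2012}) gives $\phi\in\dot{BV}(\R^2)$, and then $\bs{\mu}_T=\mathfrak{R}\grad\phi$ by construction. The only mildly delicate step is this invocation of the Poincar\'e lemma in the distributional setting; all other steps are a test-function selection or a direct computation, so I do not anticipate any serious obstacle.
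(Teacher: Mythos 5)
Your proposal is correct and follows essentially the same route as the paper: the same product test functions $\psi(x_1,x_2)\eta(x_3)$ with the two choices of $(\eta(0),\eta'(0))$ for (a)$\Leftrightarrow$(b), and the same Schwartz/Poincar\'e lemma for curl-free distributions plus \cite[Theorem 6.7.7]{Dem2012} for (b)$\Leftrightarrow$(c). The only discrepancy is an inconsequential sign on the $\eta'(0)\int\psi\,d\mu_3$ term, which does not affect the conclusion $\mu_3=0$.
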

\begin{proof}
%
%
%
Since $\bs{\mu}$ has support contained in $\R^2\times\{0\}$,
 it  can be written in tensor product form as $\bs{\mu}= (\bs{\mu}\mathcal{b}\R^2)\otimes\delta_{x_3=0}$ and thus $\div\bs\mu=(\div\bs \mu_T)\otimes\delta_{x_3=0} +  \mu_3\otimes\delta_{x_3=0}'$, where $\delta_{x_3=0}$ is the Dirac 
mass at zero on $\R$ in the variable $x_3$ and $\delta_{x_3=0}'$ its distributional derivative. 
Hence, (b) implies that $\div\bs\mu=0$ and therefore
(b)$\Rightarrow$(a).
Next, for any $\phi\in C_c^\infty(\R^3)$, let  $\phi_0,\phi_1\in
C_c^\infty(\R^2)$ be given by $\phi_0(x_1,x_2)=\phi(x_1,x_2,0)$ and 
$\phi_1(x_1,x_2)=\partial_{x_3}\phi(x_1,x_2,0)$. Then,
it holds that
\begin{equation}\label{lem2pf1}
\langle \div \bs{\mu}, \phi\rangle = -\langle \mu_1, \partial_{x_1}\phi_0\rangle -\langle \mu_2, \partial_{x_2}\phi_0\rangle- \langle    {\mu_3},\phi_1 \rangle.
\end{equation}
Pick $\phi$ of the form $\phi(x_1,x_2,x_3)=\psi(x_1,x_2)\eta(x_3)$
where $\psi\in C_c^\infty(\R^2)$ and   $\eta\in C_c^\infty(\R)$.
First, letting $\eta$ be such that $\eta(0)=1$ and $\eta'(0)=0$, we deduce 
from \eqref{lem2pf1}  that if $\div\bs\mu=0$
then $\div \bs{\mu}_T=0$. Second, letting $\eta$ be such that $\eta(0)=0$ and $\eta'(0)=1$, we deduce 
from \eqref{lem2pf1}  again that if $\div\bs\mu=0$ then $\mu_3=0$, whence 
(a)$\Rightarrow$(b).

 Suppose now that (b) holds.  
Then $(-\mu_2,\mu_1)^T$ satisfies the Schwartz rule when viewed as a $\R^2$ valued distribution on $\R^2$; i.e, $\partial_{x_2}(-\mu_2)=\partial_{x_1}\mu_1$.
  Therefore, $\mathfrak{R}\,\bs{\mu}_T=(-\mu_2,\mu_1)^T$ is the gradient of a scalar valued distribution $\Psi$ (see, \cite{Schwartz}). Since  the components of $\grad\Psi$ are finite signed measures, $\Psi\in BV_{\text{loc}}$ \cite[Theorem 6.7.7]{Dem2012} so that in fact $\Psi\in \dot{BV}(\R^2)$. Thus,
(c) holds with $\phi=-\Psi$ and we get that (b)$\Rightarrow$(c).
 In the other direction if   
$\bs{\mu}_T=(-\partial_{x_2}\phi ,\partial_{x_1}\phi)^T$ for some 
distribution $\phi$, then $\div \bs{\mu}_T=- \partial_{x_1}\partial_{x_2}\phi+\partial_{x_2}\partial_{x_1}\phi =0$ so that (c)$\Rightarrow$(b). 
\end{proof}

Lemma \ref{2Dsole} entails
that decomposing
solenoids in the plane is equivalent, up to a rotation,
to decomposing gradients. 
As surmised in \cite{Smi94}, the latter can be achieved {\it via} 
the co-area formula and the 
decomposition of the measure-theoretical boundary of sets of finite 
perimeter in $\R^2$ into rectifiable Jordan curves.  
In Section \ref{BVdsec} to come, we record a version of the
co-area formula for  $\dot{BV}$-functions,
and we  establish approximate continuity of $M$-connected components 
of sup-level sets of  
such functions (see Proposition~\ref{convmcc} and Theorem ~\ref{approxceq}).
The latter is needed to handle 
measurability issues in 
the loop decomposition of planar divergence-free measures 
(see Proposition \ref{mesi}), but is also of independent interest.
Though we later lean on the planar case, 
it would be artificial to restrict to $\R^2$ in Section \ref{BVdsec} and
we shall present the material  in $\R^n$.

\section{Sup-level sets of functions in $\dot{BV}(\R^n)$ and the co-area formula}
\label{BVdsec}

We first record a summability property of homogeneous $BV$-functions.

\begin{lemma}\cite[Theorem 3.47]{AFP}\label{poin}
If $\phi\in\dot{BV}(\R^n)$, there is $p\in\R$ such that $\phi-p\in L^{n/(n-1)}(\R^n)$.
\end{lemma}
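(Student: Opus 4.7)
The plan is to combine the Sobolev--Poincar\'e inequality on balls, applied uniformly in the radius, with a Cauchy-sequence argument for the ball-averages of $\phi$, and conclude by Fatou's lemma.

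First, I would observe that, since $\phi\in\dot{BV}(\R^n)\subset L^1_{\mathrm{loc}}(\R^n)$ and $\grad\phi$ is a finite $\R^n$-valued measure on the whole of $\R^n$, the restriction $\phi|_{B(0,R)}$ lies in $BV(B(0,R))$ for every $R>0$. The classical Sobolev--Poincar\'e inequality for $BV$ functions on a ball then gives
\[
\bigl\|\phi-(\phi)_{B(0,R)}\bigr\|_{L^{n/(n-1)}(B(0,R))}\leq C\,\|\grad\phi\|_{TV(B(0,R))}\leq C\,M,
\]
where $(\phi)_{B(0,R)}$ denotes the mean of $\phi$ on $B(0,R)$, $M:=\|\grad\phi\|_{TV}$, and the constant $C$ depends only on $n$. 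The key point here is that $C$ is independent of $R$: this is proved by the standard dilation $\phi(x)\mapsto\phi(Rx)$, which leaves the dimensionless exponents in the inequality invariant (a fact I would briefly recall rather than re-derive).

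Second, I would show that the averages $(\phi)_{B(0,R)}$ converge to some $p\in\R$ as $R\to\infty$. For $0<R_1<R_2$, H\"older's inequality on $B(0,R_1)$ together with the previous step yields
\[
\bigl|(\phi)_{B(0,R_1)}-(\phi)_{B(0,R_2)}\bigr|
\leq\frac{1}{|B(0,R_1)|^{(n-1)/n}}\,\bigl\|\phi-(\phi)_{B(0,R_2)}\bigr\|_{L^{n/(n-1)}(B(0,R_1))}
\leq C'\,M\,R_1^{-(n-1)}.
\]
Since the right-hand side tends to $0$ as $R_1\to\infty$, the family $\{(\phi)_{B(0,R)}\}_{R>0}$ is Cauchy, so there exists a real number $p$ with $(\phi)_{B(0,R)}\to p$.

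Finally, set $f_R:=(\phi-(\phi)_{B(0,R)})\chi_{B(0,R)}$. By the first step, $\|f_R\|_{L^{n/(n-1)}(\R^n)}\leq CM$ for all $R$, and for $\mathcal{L}_n$-a.e.\ $x$ one has $f_R(x)\to\phi(x)-p$ as $R\to\infty$. Fatou's lemma then gives
\[
\int_{\R^n}|\phi-p|^{n/(n-1)}\,d\mathcal{L}_n\leq\liminf_{R\to\infty}\int_{\R^n}|f_R|^{n/(n-1)}\,d\mathcal{L}_n\leq (CM)^{n/(n-1)},
\]
so $\phi-p\in L^{n/(n-1)}(\R^n)$, as claimed. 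The only nontrivial technical point is checking that the dimensional constant in the Sobolev--Poincar\'e inequality on balls is indeed scale-invariant for $BV$ functions; everything else is a routine application of H\"older's inequality and Fatou's lemma. Since this lemma is explicitly quoted from \cite[Theorem~3.47]{AFP}, one may of course simply invoke that reference instead of reproducing the argument.
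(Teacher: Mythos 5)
Your proof is correct (for $n\ge 2$, which is the setting of the cited result and the only case the exponent $n/(n-1)$ makes sense in): the scale-invariant Sobolev--Poincar\'e inequality on balls, the Cauchy estimate $|(\phi)_{B(0,R_1)}-(\phi)_{B(0,R_2)}|\le C'MR_1^{-(n-1)}$, and the Fatou passage to the limit are all sound. The paper itself gives no proof and simply cites \cite[Theorem 3.47]{AFP}; your argument is essentially the standard proof of that theorem, so there is nothing to compare beyond noting that you have correctly reconstructed it.
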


Next, we collect several definitions and properties
that are central to what follows. 
For  $E\subset \R^n$ a Borel set, the {\em measure-theoretical boundary} of $E$ is the set $\partial_M E$  defined by
\begin{equation}
\label{defthb}
\partial_M E := \left\{ x\in\R^n : \limsup_{\rho\rightarrow 0}\frac{\mathcal L_n(\B(x,\rho)\cap E)}{\mathcal L_n(\B(x,\rho))} > 0 \text{ and } \limsup_{\rho\rightarrow 0}\frac{\mathcal L_n(\B(x,\rho)\setminus E)}{\mathcal L_n(\B(x,\rho))} > 0 \right\}.
\end{equation}

Note that for any set $E$, $\partial_ME$ is a subset of the topological boundary of $E$.

A measurable set $E\subset \R^n$ such that $\grad \chi_E\in \mathcal{M}(\R^n)^n$  is said to be {\em of finite perimeter}\footnote{In \cite{Attetal2006,evans_gariepy_2015,Ziemer}, the definition is that $\chi_E\in BV(\R^n)$. The present definition means that
$\chi_E\in \dot{BV}(\R^n)$ and, in view of Lemma \ref{poin}, 
amounts to requiring that either $\chi_E$ or $\chi_{\R^n\setminus E}$ lies in $ BV(\R^n)$.}.
For such a set it holds that
\begin{equation}
\label{GGma}
|\grad \chi_E | = \mathcal{H}^{n-1}\mathcal{b}\partial_ME,
\end{equation} 
and $\|\grad \chi_E \|_{TV}=\mathcal{H}^{n-1}(\partial_ME)$ is called the perimeter of $E$, denoted as $\mathcal{P}(E)$.
The identity (\ref{GGma}) can be obtained by 
combining \cite[Theorem 5.15 (iii)]{evans_gariepy_2015}, saying
that (\ref{GGma}) holds when $\partial_ME$ is replaced by the so-called 
reduced boundary of $E$, with \cite[Lemma 5.5]{evans_gariepy_2015},
asserting that $\partial_M E$ differs from the reduced boundary by a set of $\mathcal{H}^{n-1}$-measure zero (see also \cite[Theorem 10.3.2]{Attetal2006}).

It follows from \eqref{GGma} that a set of finite perimeter has a measure-theoretical
boundary  of finite $\mathcal{H}^{n-1}$-measure. In contrast,
its Euclidean boundary can be much larger and
even have  positive $\mathcal{L}_n$-measure, as the following example shows.

\begin{exa}\label{cutout}
	
	Let $E_1=\overline{\B}(0,1)\subset\R^2$ and $\{q_j\}_{j\in\NatZer}$ 
enumerate  all points in $E_1$ with rational coordinates.
	Having defined inductively a closed set $E_n$ for $n\geq1$, 
let $j_n$ 
be the smallest integer such that $q_{j_n}$ lies interior to
$E_n$ and $B_n$  the
largest open ball centered at $q_{j_n}$ contained in $E_n$, with radius $r_n\leq2^{-n}$ ( at some steps $B_n$ could be empty).
	Then, define $E_{n+1}=E_n\setminus B_n$ which must be 
a closed set with nonempty interior, otherwise a finite 
union of balls of total $\mathcal{L}_2$-measure less than $\pi/3$ would cover
$\B(0,1)$. 
Hence, the process can continue indefinitely,
and we let $E=\bigcap E_n$ which is a closed set.

	Clearly $E$ has no interior, for all the $q_j$ have been excised out
in the process; therefore its Euclidean boundary is $E$ itself. Moreover,
$\mathcal{L}_2(E)\geq\pi-\pi\sum_{n=1}^\infty r_n^2
\geq\pi(1-\sum_{n=1}^\infty4^{-n})>0$.
	
	Now, by the standard Green formula, 
each $E_n$ is of finite perimeter, because it is a finitely
connected set with piecewise smooth boundary. Thus, 
	$\{\chi_{E_n}\}$ is a nonincreasing sequence of $BV$-functions and
 their point-wise limit $\chi_E$ is integrable.
	Also, by \eqref{GGma}, it holds that $\|\grad\chi_{E_n}\|_{TV}\leq2\pi\sum_{n=0}^\infty r_n\leq 4\pi$, therefore we can use \cite[Remark 5.2.2]{Ziemer} to the effect that $\chi_E\in BV(\R^2)$, i.e. $E$ is a set of finite perimeter with Euclidean boundary of positive $\mathcal{L}_2$-measure, as announced.
\end{exa}

For any $E\subset\R^n$ of finite perimeter,
one defines the {\em generalized unit inner normal} 
$\bs{\nu}_E$ to $\partial_ME$ as the Radon-Nikodym derivative  
$\bb{u}_{\grad\chi_E}$ which is but
$d\grad \chi_E/d(\mathcal{H}^{n-1}\mathcal{b}\partial_ME)$, by \eqref{GGma}.   Then, the Radon Nikodym Theorem entails the following version of the
Gauss-Green formula: \\
	{\em if $E\subset \R^n$ is a set of finite perimeter, then 
for each Borel set $B\subset\R^n$ it holds that
	\begin{equation}
	\label{dermeas}
	\grad \chi_E(B) = \int_{B} \bs{\nu}_E \, d\left(\mathcal{H}^{n-1}\mathcal{b}\partial_ME\right). 
	\end{equation}
	 }

The connection with the classical Gauss-Green formula is  more transparent
on the distributional version of (\ref{dermeas}), namely:
\begin{equation}
\label{dermeasd}
\int \chi_E\,\mbox\div{\bs \varphi} \,d\mathcal{L}_n = -\int {\bs \varphi}\cdot\bs{\nu}_E \, d\left(\mathcal{H}^{n-1}\mathcal{b}\partial_ME\right), \quad{\bs \varphi} \in C_c^1(\R^n,\R^n).
\end{equation}
The identity \eqref{dermeasd} was proven in 
\cite{DeGiorgi1,DeGiorgi2} and \cite{Federer1,Federer2}; see also \cite[Theorem~5.16]{evans_gariepy_2015} and \cite[Theorem~10.3.2]{Attetal2006}.
Note that if $E$ has finite perimeter, then so does $\R^n\setminus E$ and
$\bs{\nu}_{\R^n\setminus E}=-\bs{\nu}_E$. 

\begin{rmk}
When $n=2$, we see from \eqref{dermeasd} that $\bs\nu_E$ coincides with 
the usual, differential-geometric  inner unit normal to the boundary of 
$E$
when the latter is a rectifiable Jordan curve, for in this case the Gauss-Greenformula is valid for both definitions of the normal
(see \cite[Theorem 10--43]{Apostol1957} for a suitable version of the Gauss-Green formula here).
Actually, Lemma \ref{Jordan_decomposition} entails that the 
measure-theoretical boundary of any planar set of finite perimeter is comprised of a countable union of rectifiable Jordan curves, up to a set of $\mathcal{H}_1$-measure zero.
Thus, both notions of inner unit normal coincide
$\mathcal{H}^1$-a.e. on the 
measure-theoretical 
 boundary of such a set. 
\end{rmk}

Given a function  $\phi\in L^1_{loc}(\R^n)$,
we consistently denote with $E_t$ the suplevel sets:
\begin{equation}
\label{defsl}
E_t:=\{ x\in \R^n\mid \phi(x)>t\}.
\end{equation} 
Of course,
the set $E_t$, as well as a number of subsequent sets in $\R^n$ that we will consider, is defined up to a set of $\mathcal{L}_n$-measure zero only, but 
which representative is chosen will be irrelevant for our purposes. 
Hereafter, we abbreviate the sentence ``up to a set of $\mathcal{L}_n$-measure zero'' by ``mod-$\mathcal{L}_n$'', and similarly for $\mathcal{H}^{n-1}$.
The sup-level sets are the main components   of  the co-area 
(or Fleming-Rishel) formula for 
$\dot{BV}$-functions, 
of which we record a version in Lemma~\ref{coarea} below; see, {\it e.g.}
\cite[Theorem 3.40]{AFP}.

\begin{lemma}\label{coarea}
	Suppose $\phi\in \dot{BV}(\R^n)$
and let $E_t$ be as
	in (\ref{defsl}).
	Then, $E_t$ has finite perimeter
for a.e. $t\in\R$ and for any Borel set $B\subset\R^n$, $\bb{g}\in L^1[d|\grad\phi|]^n$ and $h\in L^1[d|\grad\phi|] $, it holds that
	\begin{enumerate}
		\item 
		$\displaystyle |\grad \phi |(B) = \int_{-\infty}^{\infty} |\grad \chi_{E_t} |(B) \, dt=
		\int_{-\infty}^{\infty} \mathcal{H}^{n-1}(\partial_ME_t\cap B) \, dt
		, $
			\item   $\displaystyle \int h\, d(|\grad \phi|) = \int_{-\infty}^{\infty}\int h \, d(|\grad \chi_{E_t}|) \, dt = \int_{-\infty}^{\infty}\int h \, d\left(\mathcal{H}^{n-1}\mathcal{b}\partial_ME_t\right) \, dt ,$
	
		\item $\displaystyle \grad \phi (B) = \int_{-\infty}^{\infty} \grad \chi_{E_t} (B) \, dt= \int_{-\infty}^{\infty}\int_{B} \bs{\nu}_{E_t} \, d\left(\mathcal{H}^{n-1}\mathcal{b}\partial_ME_t\right) \, dt, $
		
		\item $\displaystyle \int \bb{g} \cdot d(\grad \phi) = \int_{-\infty}^{\infty}\int \bb{g}\cdot d(\grad \chi_{E_t}) \, dt = \int_{-\infty}^{\infty}\int \bb{g}\cdot \bs{\nu}_{E_t} \, d\left(\mathcal{H}^{n-1}\mathcal{b}\partial_ME_t\right) \, dt $,
	\end{enumerate}
where in (b) the function $h$ lies in
both $L^1[d|\nabla\chi_{E_t}|]$ and  $L^1[d\mathcal{H}^{n-1}\mathcal{b}\partial_ME_t]$ for a.e. $t$ and in (d) the functions ${\bf g}$ and ${\bf g}\cdot\bs{\nu}_{E_t}$
lie in $L^1[d|\nabla\chi_{E_t}|]^n$ and  $L^1[d\mathcal{H}^{n-1}\mathcal{b}\partial_ME_t]$, respectively, for a.e. $t$.  
\end{lemma}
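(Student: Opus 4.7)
This is the Fleming--Rishel co-area formula, stated in \cite[Theorem 3.40]{AFP} for $BV(\Omega)$; the task is to adapt it to $\dot{BV}(\R^n)$. In each of (a)--(d), the second equality is immediate from \eqref{GGma} (scalar case) and \eqref{dermeas} (vector case), so I focus throughout on the first equalities. For (a), since $\phi\in L^1_{\text{loc}}(\R^n)$ and $\grad\phi$ is a finite measure, $\phi|_{\B(0,R)}\in BV(\B(0,R))$ for every $R>0$; applying the classical formula on $\B(0,R)$ yields $|\grad\phi|(A)=\int_{-\infty}^\infty|\grad\chi_{E_t}|(A)\,dt$ for every Borel $A$ compactly contained in $\B(0,R)$, since on such $A$ the perimeter measure of $E_t$ relative to $\B(0,R)$ coincides with its ambient analogue. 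Letting $R\to\infty$ and applying monotone convergence on both sides yields (a) for any Borel $B\subset\R^n$; the finiteness $|\grad\phi|(\R^n)<\infty$ then forces $E_t$ to have finite perimeter for a.e.\ $t$.

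Part (b) follows from (a) by a standard layer-cake argument: for $h\ge 0$ measurable, write $h(x)=\int_0^\infty\chi_{\{h>s\}}(x)\,ds$, apply (a) to each Borel set $\{h>s\}$, and exchange the order of integration via Fubini; extend to signed $h\in L^1[d|\grad\phi|]$ by linearity, with the integrability for a.e.\ $t$ following from finiteness of the iterated integral.

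For (d) (and hence (c)), I test both sides against $\bs\varphi\in C_c^\infty(\R^n,\R^n)$: by definition of $\grad\phi$, one has $\int\bs\varphi\cdot d\grad\phi=-\int\phi\,\div\bs\varphi\,d\mathcal{L}_n$. Inserting the pointwise layer-cake $\phi=\int_0^\infty\chi_{E_t}\,dt-\int_{-\infty}^0\chi_{\R^n\setminus E_t}\,dt$ (valid $\mathcal{L}_n$-a.e.), applying Fubini (justified by the compact support of $\div\bs\varphi$ together with $\phi\in L^1_{\text{loc}}$), and using the Gauss--Green identity \eqref{dermeasd} applied to each $\chi_{E_t}$ and $\chi_{\R^n\setminus E_t}$, together with $\grad\chi_{\R^n\setminus E_t}=-\grad\chi_{E_t}$, I obtain $\int\bs\varphi\cdot d\grad\phi=\int_{-\infty}^\infty\int\bs\varphi\cdot d\grad\chi_{E_t}\,dt$. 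Part (a) ensures the right-hand side defines a finite $\R^n$-valued measure on $\R^n$; since it agrees with $\grad\phi$ on test fields, it agrees as measures on Borel sets, yielding (c), and (d) for general $\bb{g}\in L^1[d|\grad\phi|]^n$ then follows by the layer-cake approximation already used for (b).

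The main technical obstacle is the localization in (a): one must argue that the perimeter measure of $E_t$ in $\B(0,R)$ restricted to Borel sets compactly contained in $\B(0,R)$ coincides with the ambient perimeter measure on those sets, so that the formula stabilizes as $R\to\infty$. The Fubini--Gauss--Green computation for (c)--(d) is more routine but demands care, since neither $\phi^+$ nor $\phi^-$ need be globally integrable; the absolute convergence one needs is supplied solely by the compactness of $\supp\bs\varphi$.
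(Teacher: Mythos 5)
Your proof is correct and follows essentially the same route as the paper's: both rest on the classical co-area formula \cite[Theorem 3.40]{AFP} together with the Gauss--Green identities \eqref{GGma} and \eqref{dermeas} for parts (a) and (c), and then obtain (b) and (d) by approximation, your layer-cake/Fubini argument being the continuous analogue of the paper's simple-function-plus-dominated-convergence step, with both hinging on the same consequence of (a) that $|\grad\phi|$-null sets are $|\grad\chi_{E_t}|$-null for a.e.\ $t$. The only cosmetic differences are that you localize to balls to pass from $BV$ to $\dot{BV}$ and re-derive the vector identity (c) from the distributional pairing via \eqref{dermeasd}, whereas the paper invokes the measure-valued form of the co-area theorem directly; neither extra step introduces a gap.
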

In fact, that $E_t$ has finite perimeter for a.e. $t$  and that
(a) and (c) hold follows from \cite[Theorem 3.40]{AFP} and \eqref{dermeas}.
Then, (a) and (c) respectively yield (b) and (d) for simple functions, 
and the general case follows by dominated convergence,
using (a) to ascertain that a Borel set $B$ such that $|\nabla\phi|(B)=0$
has $|\nabla\chi_{E_t}|(B)=0$ and $\mathcal{H}^{n-1}\mathcal{b}\partial_ME_t(B)=0$ for a.e. $t$.
One can  also describe the 
``measure theoretical discontinuities'' of $\dot{BV}$-functions as follows.
For $\Omega\subset\R^n$ an open set and a $\mathcal{L}_n$-measurable
$f:\Omega\to\R$,
define for $x\in\R^n$ (see \cite[Def. 5.8, 5.9]{evans_gariepy_2015}):
\begin{gather}
f^{\sup}(x):=\text{ap}\limsup_{y\to x}f(y)=
\inf \left\{
t\left|\lim_{r\to0}\frac{\mathcal{L}_n (\B(x,r) \cap \{\phi>t\})}{\mathcal{L}_n (\B(x,r))}=0\right.\right\},\notag\\
f^{\inf}(x):=\text{ap}\liminf_{y\to x}f(y)=
\sup \left\{
t\left|\lim_{r\to0}\frac{\mathcal{L}_n (\B(x,r) \cap \{\phi<t\})}{\mathcal{L}_n (\B(x,r))}=0\right.\right\}\label{jump}\\
\text{and } J(f):=\left\{x\left| f^{\inf}(x) < f^{\sup}(x) \right.\right\}.\notag
\end{gather}
\begin{lemma}\label{jump_lemma}
	Given $\phi\in\dot{BV}(\R^n)$, the set $J(\phi)$ is $(n-1)$-rectifiable.
	Furthermore, $\grad\phi\mathcal{b}J(\phi)$ is absolutely continuous with respect to $\mathcal{H}^{n-1}$  and, with $E_t$ as in \eqref{defsl},
its Radon-Nykodim derivative satisfies for a.e. $t\in\R$ and $\mathcal{H}^{n-1}$-a.e. $x\in\partial_ME_t\cap J$:
$d{\grad\phi}/d\mathcal{H}^{n-1}=(\phi^{\sup}-\phi^{\inf})\bs{\nu}_{E_t}$.
\end{lemma}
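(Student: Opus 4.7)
By Lemma~\ref{poin}, $\phi-p\in L^{n/(n-1)}(\R^n)$ for some constant $p\in\R$. Subtracting $p$ changes none of the objects in the statement—$\grad\phi$, the jump set $J(\phi)$, the approximate traces $\phi^{\sup},\phi^{\inf}$, and the suplevel sets $E_t$ are all translated consistently—so I may assume $\phi\in L^{n/(n-1)}(\R^n)$. In particular, $\phi\in BV(\Omega')$ on every bounded open $\Omega'\subset\R^n$, and because each assertion in the lemma is local, it suffices to verify them on arbitrary bounded open sets. This reduction lets me invoke the classical $BV$-function theory in $\Omega'$ without modification.

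Next, I apply the Federer--Vol'pert jump structure theorem (see \cite[Thm.~3.78]{AFP}): $J(\phi)$ is $(n-1)$-rectifiable, and there exists a Borel unit vector field $\nu_\phi:J(\phi)\to \sph(0,1)$ such that
$$\grad\phi\mathcal{b}J(\phi)=(\phi^{\sup}-\phi^{\inf})\,\nu_\phi\,\mathcal{H}^{n-1}\mathcal{b}J(\phi),$$
which already gives the rectifiability of $J(\phi)$ and the absolute continuity $\grad\phi\mathcal{b}J(\phi)\ll\mathcal{H}^{n-1}$. The same reference provides the blow-up characterization: at $\mathcal{H}^{n-1}$-a.e.\ $x\in J(\phi)$, the rescalings $\phi_r(y):=\phi(x+ry)$ converge in $L^1_{loc}(\R^n)$ as $r\to 0^+$ to $\phi^{\sup}(x)\chi_{H^+_x}+\phi^{\inf}(x)\chi_{H^-_x}$, where $H^\pm_x$ are the open half-spaces through the origin with inner normals $\pm\nu_\phi(x)$.

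It remains to identify $\nu_\phi(x)$ with $\nu_{E_t}(x)$ for $t$ in the jump interval. Fix $x\in J(\phi)$ at which the preceding blow-up holds and pick $t\in(\phi^{\inf}(x),\phi^{\sup}(x))$. From the $L^1_{loc}$ convergence of the rescalings and the definitions \eqref{jump}, the rescaled sets $(E_t-x)/r$ converge to $H^+_x$ in $L^1_{loc}$, so $x\in\partial_M E_t$ and the standard blow-up characterization of the inner normal of a set of finite perimeter (\cite[Thm.~5.15]{evans_gariepy_2015}) yields $\bs{\nu}_{E_t}(x)=\nu_\phi(x)$. Let $N\subset J(\phi)$ denote the $\mathcal{H}^{n-1}$-negligible exceptional set on which either the above blow-up fails or $\nu_\phi$ is not defined. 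Applying Lemma~\ref{coarea}(b) with $h=\chi_N$ shows that $\mathcal{H}^{n-1}(\partial_M E_t\cap N)=0$ for a.e.\ $t$, so for a.e.\ $t\in\R$ the identity $\bs{\nu}_{E_t}=\nu_\phi$ holds at $\mathcal{H}^{n-1}$-a.e.\ $x\in\partial_M E_t\cap J(\phi)$. Combining with the Federer--Vol'pert formula above yields the claimed pointwise identity for the Radon--Nikodym derivative.

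The main obstacle is the last step, namely making the $t$-independence of $\bs{\nu}_{E_t}(x)$ on the jump interval precise: the blow-up identification is pointwise on $J(\phi)\setminus N$, but the statement requires it simultaneously for a.e.\ $t$ and $\mathcal{H}^{n-1}$-a.e.\ $x\in\partial_M E_t\cap J(\phi)$. The coarea formula of Lemma~\ref{coarea}, used as a layer-cake/Fubini tool to transfer $\mathcal{H}^{n-1}$-negligible sets on $J(\phi)$ to $\mathcal{H}^{n-1}$-negligible sets on $\partial_M E_t$ for a.e.\ $t$, is exactly what handles this exchange of quantifiers.
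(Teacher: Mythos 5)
Your proposal is correct and follows essentially the same route as the paper: localize to a bounded open set so that $\phi$ becomes a genuine $BV$ function there, then invoke the Federer--Vol'pert jump structure theorem (the paper simply cites \cite[Remark 10.3.4, Theorem 10.4.1]{Attetal2006} and \cite[Theorem 3.78]{AFP} at this point). The extra work you do — the blow-up identification of $\nu_\phi$ with $\bs{\nu}_{E_t}$ and the coarea/Fubini argument to exchange the quantifiers over $t$ and $x$ — is just an unpacking of what those cited results already contain, and is carried out correctly.
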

\begin{proof}
	Clearly, it is enough that the result holds for the restriction 
$\phi_{|B}$ of $\phi$ to an arbitrary open ball $B\subset \R^n$, with $E_t$ replaced 
by $E_t\cap B$ and $J(\phi)$ by $J(\phi)\cap B$. 
Since $\phi_{|B}\in BV(B)$, the conclusion now follows from 
\cite[Remark 10.3.4, Theorem 10.4.1]{Attetal2006}, see also
\cite[Theorem 3.78]{AFP}. 
\end{proof}


A set $E\subset\R^n$ with finite perimeter is called
\emph{indecomposable} if it cannot be partitioned as $E=F_1\cup F_2$ with 
$\mathcal{L}_n(F_i)>0$ for $i=1,2$ and
$\mathcal{P}(F_1)+\mathcal{P}(F_2)=\mathcal{P}(E)$. Every set $E$ of finite 
perimeter can be partitioned as a countable union $\cup_iC_i$, where the $C_i$ 
are indecomposable with $\mathcal{L}_n(C_i)>0$ for each $i$ and
$\sum_i\mathcal{P}(C_i)=\mathcal{P}(E)$. Such a partition is 
unique mod-$\mathcal{L}_n$, and the $C_i$ are called \emph{the $M$-connected components of $E$}; moreover, if $F\subset E$ and $F$ is indecomposable, then $F\subset C_i$  mod-$\mathcal{L}_n$ for some $i$,  see
\cite[Theorem~1]{Ambetal}. 

We shall enumerate the $M$-connected components of a set $E$
with $\mathcal{P}(E)<\infty$ so that their
$\mathcal{L}_n$-measures are nonincreasing;
of course, several orderings with this property  exist
if distinct components have the same measure, which will eventually lead us 
to regard  $M$-connected components as equivalence classes. In this connection,
if $E$ has finitely many $M$-connected components, 
we find it convenient to append to them a countable infinity of spurious components 
having $\mathcal{L}_n$-measure zero (therefore also zero perimeter).
This will allow us to consistently 
index the $M$-connected components over $\NatZer$, regardless whether 
the set under consideration has finitely many nontrivial components or not.

Formally, let $\mathcal{S}$ be the set of sequences $(F_i)_{i\in\NatZer}$
of subsets of $\R^n$ mod-$\mathcal{L}_n$ such that 
$\mathcal{L}_n(F_i)\geq \mathcal{L}_n(F_{i+1})$
and $\lim_i\mathcal{L}_n(F_i)=0$. 
We say that two 
elements  $(F_i)_{i\in\NatZer}$, 
$(F_i^\prime)_{i\in\NatZer}$ of $\mathcal{S}$ are equivalent if
there is bijection 
$\sigma:\NatZer \to\NatZer$ such that
$F_{\sigma(i)}=F^\prime_i$ for all $i$ mod $\mathcal{L}_n$. 
We denote by $\dot{\mathcal{S}}$ the set of equivalence classes.
For $E$ a set of finite perimeter and $C_0, C_1,C_2,\cdots$ a list  of
its $M$-connected components, arranged so that $\mathcal{L}_n(C_i)\geq\mathcal{L}_n(C_{i+1}$, we consider $(C_i)_{i\in\NatZer}$ as (a representative of) 
an element of $\dot{\mathcal{S}}$.
If  $\mathcal{L}_n(E)<\infty$, then clearly $\mathcal{L}_n(C_i)<\infty$ 
for all $i$,  and if 
$\mathcal{L}_n(E)=\infty$, then $C_0$  is the only component with infinite 
$\mathcal{L}_n$-measure \cite[Rem. 1]{Ambetal}. 
In particular, since $\sum_i\mathcal{P}(C_i)=\mathcal{P}(E)$,
we have indeed that $\lim_i\mathcal{L}_n(C_i)=0$, by
the isoperimetric inequality 
(see {\it e.g.} \cite[Theorem~5.11]{evans_gariepy_2015}). Of course, 
$(C_i)_{i\in\NatZer}$ is a rather special element of $\mathcal{S}$,
because the $C_i$ are pairwise disjoint mod-$\mathcal{L}_n$ and the
$\partial_M C_i$ are pairwise disjoint mod-$\mathcal{H}^{n-1}$
(see \cite[Proposition~3]{Ambetal}).

We now recall  an extremal property of $M$-connected components. 
Fix $\alpha\in(1,n/(n-1))$ and, for
any measurable set $F\subset\R^n$,
set $G(F):=(\int_{F}e^{-|x|^2}dx)^{1/\alpha}$. 
If $E$ has finite perimeter,
then its $M$-connected components  are  the unique 
solution of
\begin{equation}
\label{maxcompc1}
\max\left\{\sum_{i\in\NatZer}G(F_i):\ 
(F_i)_{i\in\NatZer}\in\dot{\mathcal{S}},\ 
\text{\rm the $F_i$ partition }\ E,
\  \sum_{i\in\NatZer}\mathcal{P}(F_i)\leq\mathcal{P}(E)
\right\},
\end{equation}
see the proof of \cite[Theorem~1]{Ambetal}.

We will also need  the notion of local convergence in measure
for sets of finite perimeter, which is just the 
$L^1_{loc}$-convergence of their characteristic function. 
Any sequence of sets with uniformly bounded perimeters has a subsequence
converging locally in measure, and the perimeter is lower semi-continuous
for this type of convergence,
see {\it e.g.} \cite[Proposition~3.6 \& Theorem~3.7]{MirandaJr}.

\begin{prop}
\label{convmcc}
Let $\phi\in\dot{BV}(\R^n)$ and $E_t$ be as in \eqref{defsl}. For $t$ such that
$E_t$ has finite perimeter, let $(C_0^t, C_1^t,C_2^t,\cdots)\in\mathcal{S}$
be (a representative of) the $M$-connected components of $E_t$.
To each $\eta>0$, there is a $\sigma$-compact set $\Sigma_\eta\subset\R$, with
$\mathcal{L}_1(\R\setminus\Sigma_\eta)<\eta$, having the following properties.
\begin{enumerate}
\item[(i)] For each $t\in\Sigma_\eta$, it holds that $E_t$ has finite perimeter.
\item[(ii)] If $(t_m)_{m\geq1}$ is a sequence in $\Sigma_\eta$ converging to 
$t_0\in\Sigma_\eta$, there is a subsequence $t_{m_j}$ such that
$C_i^{t_{m_j}}$ converges locally in measure, for fixed  $i$
as $j\to\infty$, 
to a set $F_i\subset\R^n$ of finite perimeter, and
the sequence $(F_0,F_1,F_2,\cdots)$ is equivalent to 
$(C_0^{t_0},C_1^{t_0},C_2^{t_0},\cdots)$
in $\dot{\mathcal{S}}$.
\item[(iii)] 
it holds that 
$\lim_j\mathcal{L}_n((C^{t_{m_j}}_i\setminus  F_i)\cup(F_i\setminus C^{t_{m_j}}_i))=0$
and  $\lim_j\mathcal{P}(C^{t_{m_j}}_i)=\mathcal{P}(F_i)$ for each $i$.
\item[(iv)] One has the limiting relations:
\begin{equation}
\label{estqcc}
\lim_{p\to\infty}\,\,\limsup_j\,\,\sum_{i\geq p}\mathcal{P}_n(C_i^{t_{m_j}})=0,\quad\mathrm{and}\quad\lim_{p\to\infty}\,\,\limsup_j\,\,\sup_{i\geq p}\mathcal{L}_n(C_i^{t_{m_j}})=0.
\end{equation}
\end{enumerate}
\end{prop}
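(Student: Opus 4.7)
My plan is to construct $\Sigma_\eta$ by repeated application of Lusin's theorem to measurable functions of $t$ arising from the coarea formula and the extremal characterization \eqref{maxcompc1}, then extract limits via the compactness theorem for sets of finite perimeter, and finally identify those limits with the $M$-decomposition of $E_{t_0}$ through the uniqueness clause of \eqref{maxcompc1}.

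First, Lemma~\ref{coarea}(a) gives $t\mapsto\mathcal{P}(E_t)\in L^1(\R)$, so Chebyshev together with Lusin's theorem produce a $\sigma$-compact $\Sigma^{(0)}_\eta\subset\R$ with $\mathcal{L}_1(\R\setminus\Sigma^{(0)}_\eta)<\eta/3$, on which $\mathcal{P}(E_t)\leq M$ for some $M>0$ and $t\mapsto\mathcal{P}(E_t)$ is continuous. Since the monotone set-valued map $t\mapsto E_t$ is $L^1_{\mathrm{loc}}$-continuous everywhere outside the at most countable set $\{t:\mathcal{L}_n(\phi^{-1}(t))>0\}$, I delete this countable null set from $\Sigma^{(0)}_\eta$ at no cost of measure. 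After choosing a measurable labelling $t\mapsto(C_i^t)_i$ in $\dot{\mathcal{S}}$ (whose existence rests on uniqueness of the maximizer in \eqref{maxcompc1}), I further apply Lusin's theorem to each of the scalar functions $t\mapsto\mathcal{P}(C_i^t)$ and $t\mapsto G(C_i^t):=\bigl(\int_{C_i^t}e^{-|x|^2}\,dx\bigr)^{1/\alpha}$, intersecting so the total excluded measure remains below $\eta$. The resulting $\sigma$-compact $\Sigma_\eta$ satisfies (i), and sequences in $\Sigma_\eta$ enjoy continuity of all the above functions.

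Given $t_m\to t_0$ in $\Sigma_\eta$, one has $\chi_{E_{t_m}}\to\chi_{E_{t_0}}$ in $L^1_{\mathrm{loc}}$ and $\mathcal{P}(E_{t_m})\to\mathcal{P}(E_{t_0})$. The bound $\sum_i\mathcal{P}(C_i^{t_m})\leq M$, together with the compactness theorem for finite-perimeter sets (\cite[Theorem~3.7]{MirandaJr}) and a diagonal extraction, yields a subsequence $t_{m_j}$ and sets $F_i$ of finite perimeter with $C_i^{t_{m_j}}\to F_i$ locally in measure for every $i$, where the $F_i$ are pairwise disjoint mod-$\mathcal{L}_n$ and contained in $E_{t_0}$. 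The isoperimetric inequality applied to each finite-measure component (the possibly unique infinite-measure component $C_0^t$ handled separately) yields $p\,\mathcal{L}_n(C_p^{t_m})\leq\sum_{i\geq 1}\mathcal{L}_n(C_i^{t_m})\leq c_n^{-n/(n-1)}M^{n/(n-1)}$, so $\mathcal{L}_n(C_p^{t_m})=O(1/p)$ uniformly in $m$, and then $\sum_{i\geq p}\mathcal{L}_n(C_i^{t_m}\cap B)=O(p^{-1/n})$ uniformly in $m$ for every ball $B$. Hence $\bigcup_iF_i=E_{t_0}$ mod-$\mathcal{L}_n$. Subadditivity and lower semi-continuity of the perimeter then force
\[
\mathcal{P}(E_{t_0})=\mathcal{P}(\textstyle\bigcup_iF_i)\leq\sum_i\mathcal{P}(F_i)\leq\liminf_j\mathcal{P}(E_{t_{m_j}})=\mathcal{P}(E_{t_0}),
\]
so every inequality is an equality and $\mathcal{P}(C_i^{t_{m_j}})\to\mathcal{P}(F_i)$ for each $i$, proving (iii).

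The central difficulty is showing each $F_i$ is itself indecomposable, so that $(F_i)$ is equivalent in $\dot{\mathcal{S}}$ to the $M$-decomposition $(C_j^{t_0})$ by uniqueness of the partition characterized by indecomposability and $\sum\mathcal{P}=\mathcal{P}(E_{t_0})$. A priori, $L^1_{\mathrm{loc}}$-limits of indecomposable sets may be decomposable (as with a dumbbell whose neck shrinks), and the extra Lusin continuity imposed on $G(C_i^t)$ is precisely what rules this out. Indeed, local convergence in measure gives $G(F_i)=\lim_jG(C_i^{t_{m_j}})$, Lusin-continuity gives $\lim_jG(C_i^{t_{m_j}})=G(C_i^{t_0})$, and the isoperimetric bound $G(C_i^{t_m})\leq c\,\mathcal{P}(C_i^{t_m})^{n/(\alpha(n-1))}$ (whose exponent exceeds $1$ since $\alpha<n/(n-1)$), combined with the Lusin continuity of each $\mathcal{P}(C_i^t)$ and $\sum_i\mathcal{P}(C_i^{t_m})\to\sum_i\mathcal{P}(C_i^{t_0})$, supplies the dominated convergence (Scheff\'e-type) required to interchange limit and series. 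This yields $\sum_iG(F_i)=\sum_iG(C_i^{t_0})=V(E_{t_0})$, where $V$ is the maximum in \eqref{maxcompc1}. Uniqueness of the maximizer forces $(F_i)\sim(C_j^{t_0})$ in $\dot{\mathcal{S}}$, proving (ii). Property (iv) follows from the uniform estimate $\mathcal{L}_n(C_p^{t_m})=O(1/p)$ and an analogous Scheff\'e tail bound on $\sum_i\mathcal{P}(C_i^{t_m})$. The main obstacles are the measurable selection of a labelled $M$-decomposition and the dominated-convergence step for the $G$-series, both hinging on the extremal formulation \eqref{maxcompc1}.
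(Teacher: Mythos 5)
Your overall architecture (Lusin on level-dependent quantities, compactness of finite-perimeter sets, identification of the limit via the uniqueness of the maximizer in \eqref{maxcompc1}) is the same as the paper's, and several pieces — the tail estimate $\mathcal{L}_n(C_p^{t_m})=O(1/p)$ leading to \eqref{cvdr1}, the chain $\mathcal{P}(E_{t_0})\leq\sum_i\mathcal{P}(F_i)\leq\liminf_j\mathcal{P}(E_{t_{m_j}})=\mathcal{P}(E_{t_0})$ yielding $\mathcal{P}(C_i^{t_{m_j}})\to\mathcal{P}(F_i)$, and the Fatou/3-$\varepsilon$ argument giving $\lim_j\sum_iG(C_i^{t_{m_j}})=\sum_iG(F_i)$ — are correct and match the paper. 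But your proof has a load-bearing gap at its central step.

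You derive the key identity $\sum_iG(F_i)=\sum_iG(C_i^{t_0})$ from ``Lusin-continuity gives $\lim_jG(C_i^{t_{m_j}})=G(C_i^{t_0})$''. To apply Lusin's theorem to $t\mapsto G(C_i^t)$ and $t\mapsto\mathcal{P}(C_i^t)$ you must first know that these are measurable functions of $t$, i.e.\ that there is a Borel-measurable \emph{labelled} selection $t\mapsto(C_0^t,C_1^t,\dots)\in\mathcal{S}$ of the $M$-decomposition. You assert this ``rests on uniqueness of the maximizer in \eqref{maxcompc1}'', but uniqueness only holds in $\dot{\mathcal{S}}$ (up to permutation, with ties in $\mathcal{L}_n$-measure broken arbitrarily), and the map $E\mapsto(\text{$M$-components of }E)$ is \emph{not} continuous for local convergence in measure — approximate continuity in $t$ is precisely the conclusion being proved, so it cannot be invoked to get measurability. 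Establishing such a measurable selection would require a genuine argument (e.g.\ a selection theorem plus a proof that the relevant graph is Borel), which you do not supply and which is at least as delicate as what it replaces. The paper's proof is structured to avoid exactly this: it only uses measurability of $t\mapsto\mathcal{P}(E_t)$ (which comes for free from the co-area formula) and obtains the missing inequality $\sum_iG(C_i^{t_0})\leq\lim_j\sum_iG(C_i^{t_{m_j}})$ by exploiting the monotonicity of $t\mapsto E_t$: for $t_{m_j}>t_0$ it builds the competitor partition $F_i^{t_{m_j}}=E_{t_{m_j}}\cap C_i^{t_0}$ of $E_{t_{m_j}}$ (checking admissibility in \eqref{maxcompc1} via disjointness of measure-theoretic boundaries), and for $t_{m_j}<t_0$ it uses that each indecomposable $C_i^{t_0}\subset E_{t_{m_j}}$ sits inside some $C_\ell^{t_{m_j}}$. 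A secondary omission: the first half of (iii) asks for \emph{global} convergence $\mathcal{L}_n(C_i^{t_{m_j}}\triangle F_i)\to0$, which does not follow from local convergence in measure alone (mass can escape to infinity); the paper again gets it from the inclusions $E_{t_{m_j}}\subset E_{t_0}$ or $E_{t_{m_j}}\supset E_{t_0}$ together with $\mathcal{L}_n(E_{t_{m_j}}\triangle E_{t_0})\to0$, a step absent from your write-up.
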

\begin{proof}
By Lemma \ref{poin}, we may assume
that $\phi\in L^{n/(n-1)}(\R^n)$.
For $t\in\R$, let us define
$M(t):=\lim_{\epsilon\to0}
\mathcal{L}_n(\{x:t-\epsilon<\phi(x)\leq t+\epsilon\})$.
If we fix $k\in\NatOne$, every finite sequence $t_1,\cdots, t_\ell$ with
$1/k<t_1<t_2<\cdots<t_\ell$ is such that
$\sum M(t_i)\leq k^{n/(n-1)}\|\phi\|_{L^{n/(n-1)}(\R^n)}^{n/(n-1)}$. Hence, the set of $t>0$ such that
$M(t)>0$ is at most countable, and the same holds for $t<0$. Let
$N\subset\R$ be a countable set with $0\in N$ such that $M(t)=0$ for 
$t\notin N$.
Let further $Z\subset\R$ be a Borel set of 
measure zero such that
$E_t$ has finite perimeter for $t\notin Z$, see Lemma \ref{coarea}.
It follows from Lemma
\ref{coarea} (a) that the map $t\mapsto\mathcal{P}(E_t)$ 
is integrable on $\R$ and  therefore, by Lusin's theorem
and the regularity of $\mathcal{L}_1$,
for each relative integer $k\in\mathbb{Z}$ we can find
a compact set $K_k\subset (k,k+1)$, with $K_k\cap (Z\cup N)=\emptyset$
and $\mathcal{L}_1((k,k+1)\setminus K_k)<3\eta/(2\pi^2(1+|k|)^2)$,
such that $t\mapsto\mathcal{P}(E_t)$ is continuous $K_k\to\R$.
Define $\Lambda_\eta:=\cup_{k\in\mathbb{Z}}K_k$, and observe that it is a $\sigma$-compact set 
such that $\mathcal{L}_1(\R\setminus\Lambda_\eta)<\eta/2$ 
and the restriction of $t\mapsto\mathcal{P}(E_t)$ to $\Lambda_\eta$ is 
continuous. 

Let $N_1$ denote the norm of
$t\mapsto \mathcal{P}(E_t)$ in $L^1(\R)$, and set
 $\Sigma_\eta:=\{t\in\Lambda_\eta, \mathcal{P}(E_t)\leq 2N_1/\eta\}$. 
By construction, $\Sigma_\eta$ is $\sigma$-compact
and $\mathcal{L}_1(\R\setminus\Sigma_\eta)<\eta$.
Note that $\Sigma_\eta\cap Z=\emptyset$, therefore $(i)$ holds.

Now, let $t_m\to t_0$ in $\Sigma_\eta$. 
As $t_0\neq0$ (for $0\notin\Sigma_\eta$) and 
$\phi\in L^{n/(n-1)}(\R^n)$, either $t_0>0$ in which 
case $\mathcal{L}_n(E_{t_0})<\infty$,  or else $t_0<0$
in which case $\mathcal{L}_n(E_{t_0})=\infty$.
In the former (resp. latter) case, we may assume that $t_m>0$ (resp. $t_m<0$), and then $\mathcal{L}_n(E_{t_m})<\infty$ (resp.
$\mathcal{L}_n(E_{t_m})=\infty$) for all $m$.
By the boundednes of $t_m\mapsto \mathcal{P}(E_{t_m})=\sum_i\mathcal{P}(C_i^{t_m})$ 
(since $t\mapsto \mathcal{P}(E_t)$ is bounded on
$\Sigma_\eta$ by construction),
we get that $\mathcal{P}(C_i^{t_m})$ is bounded independently 
of $i$ and $m$,  hence for each $i$ some subsequence $C_i^{t^{(i)}_{m_j}}$ converges locally 
in measure to a set $F_i$ of finite perimeter.
Using a diagonal argument, we may assume that $t_{m_j}^{(i)}=t_{m_j}$ is independent of $i$, and that $C_i^{t_{m_j}}$ converges locally in measure to $F_i$
for each $i\geq0$. Next, recall the definition of 
$G$ given before \eqref{maxcompc1}  and observe that the argument of \cite[proof of Eqn. (12)]{Ambetal}
applies with minor modifications to yield
\begin{equation}
\label{cvdr1}
\lim_{p\to\infty}\limsup_j\sum_{i=p}^\infty G(C_i^{t_{m_j}})=0.
\end{equation}
Note also that  $G(C_i^{t_{m_j}})\to G(F_i)$ for fixed $i$ as $m\to\infty$,
because $x\mapsto e^{-|x|^2}$ is summable and so a 3-$\varepsilon$
argument reduces the issue to $L^1_{loc}$-convergence of $e^{-|x|^2}\chi_{C_i^{t_{m_j}}}(x)$ to $e^{-|x|^2}\chi_{F_i}(x)$, which follows from 
local convergence in  measure of $C_i^{t_{m_j}}$ to $F_i$.  
Now, by \eqref{cvdr1}, for every $\epsilon>0$ there is a $p>0$ such that $\limsup_j\sum_{i=p}^\infty G(C_i^{t_{m_j}})<\epsilon$.
Thus
\begin{align*}
\sum_iG(F_i) \leq \liminf_{j\to\infty}\sum_iG(C_i^{t_{m_j}})
\leq \lim_{j\to\infty}\sum_{i=0}^pG(C_i^{t_{m_j}})+\epsilon 
= \sum_{i=0}^p G(F_i) +\epsilon
\leq \sum_iG(F_i) +\epsilon,
\end{align*}
where the first inequality follows from Fatou's lemma (for series). 
Since $\epsilon$ was arbitrary, we get
\begin{equation}
\label{ElimG1}
\lim_{j\to\infty}\sum_iG(C_i^{t_{m_j}})=\sum_iG(F_i).
\end{equation}

Because the $C_i^{t_{m_j}}$ are pairwise disjoint mod-$\mathcal{L}_n$, so are the $F_i$. Moreover, since $t_0\notin N$ by definition of $\Sigma_\eta$, we have that
\begin{equation}
\label{ulsint}
\lim_{t\to t_0}\mathcal{L}_n\left((E_t\setminus E_{t_0})\bigcup (E_{t_0}\setminus E_t)\right)=0,
\end{equation}
implying by local convergence in measure 
that $F_i\subset E_{t_0}$ mod-$\mathcal{L}_n$ for each $i$. In addition,
as $\alpha>1$, we see that
\eqref{cvdr1} {\it a fortiori} implies: 
\[
\sum_i  \int_{F_i}e^{-|x|^2}dx=\lim_{j\to\infty}\sum_i\int_{C_i^{t_{m_j}}}e^{-|x|^2}dx=\lim_{j\to\infty}\int_{E_{t_{m_j}}}e^{-|x|^2}dx
=\int_{E_{t_0}}e^{-|x|^2}dx,
\]
where the last equality follows from \eqref{ulsint}.
Thus, as $e^{-|x|^2}>0$ for all $x$, we
get $\mathcal{L}_n(E_{t_0}\setminus\cup_iF_i)=0$, 
whence the $F_i$ partition $E_{t_0}$ mod-$\mathcal{L}_n$. 
Also,  by the lower semi-continuity of perimeter
with respect to local convergence in measure, we get that 
\begin{equation}
\label{lscp}
\sum_i\mathcal{P}(F_i)\leq\lim_j\sum_i\mathcal{P}(C_i^{t_{m_j}})
=\lim_j\mathcal{P}(E_{t_{m_j}})=\mathcal{P}(E_{t_0}),
\end{equation}
where the last equality comes from the continuity of
$t\mapsto\mathcal{P}(E_t)$ on $\Sigma_\eta$. Therefore, 
by the maximizing property \eqref{maxcompc1}
 of $M$-connected components, it holds that
\begin{equation}
\label{limpres1}
\sum_i G(F_i)\leq \sum_i G(C_i^{t_0}).
\end{equation}
 \emph{We claim} that in fact 
$\sum_i G(F_i)= \sum_i G(C_i^{t_0})$. To show this, it is enough to 
consider separately the two cases where $t_{m_j}\to t_0$ from above and
from below. Assume first that $t_{m_j}>t_0$ for all $j$, whence 
$E_{t_{m_j}}\subset E_{t_0}$.
Set $F_i^{t_{m_j}}:=E_{t_{m_j}}\cap C_i^{t_0}$ and observe that
the $(F_i^{t_{m_j}})_{i\in\NatZer}$ are disjoint mod-$\mathcal{L}_n$
and form a partition of $E_{t_{m_j}}$ mod-$\mathcal{L}_n$. 
As 
$\partial_MF_i^{t_{m_j}}\subset \partial_ME_{t_{m_j}}\cup\partial_MC_i^{t_0}$
by  definition  \eqref{defthb},
and because each point of $\partial_MF_i^{t_{m_j}}\setminus\partial_MC_i^{t_0}$
is clearly a density point of $C_i^{t_0}$, we get since 
the sets of density points of the $C_i^{t_0}$ are pairwise disjoint while  
$\mathcal{H}^{n-1}(\partial_MC_{i_1}^{t_0}
\cap\partial_MC_{i_2}^{t_0})=0$ for $i_1\neq i_2$ (see \cite[Proposition 3]{Ambetal})
that  the $\partial_MF_i^{t_{m_j}}$ are pairwise disjoints mod-$\mathcal{H}^{n-1}$. Hence, by \cite[Proposition 3]{Ambetal} again, it holds that
$\mathcal{P}(E_{t_{m_j}})=\sum_i\mathcal{P}(F_i^{t_{m_j}})$ and so the
$F_i^{t_{m_j}}$ are candidate maximizers in \eqref{maxcompc1} if 
we put $E=E_{t_{m_j}}$ there.
However, 
as $\mathcal{L}_n(E_{t_0}\setminus E_{t_{m_j}})\to0$ by
\eqref{ulsint},
it holds that
$\sum_i\mathcal{L}_n(C_i^{t_0}\setminus F_i^{t_{m_j}})\to0$ when $j\to\infty$, and
since $e^{-|x|^2}$ is summable we get by dominated convergence that
\begin{equation}
\label{partind}
\sum_iG(C_i^{t_0})=\lim_j\sum_iG(F_i^{t_{m_j}})\leq\lim_j\sum_iG(C_i^{t_{m_j}}), 
\end{equation}
where the last inequality comes from the maximizing character of the
$(C_i^{t_{m_j}})$ in \eqref{maxcompc1} when $E=E_{t_{m_j}}$. 
The claim in this case now follows from \eqref{partind}, \eqref{limpres1}
and \eqref{ElimG1}. Assume next that $t_{m_j}<t_0$ for all $j$, whence 
$E_{t_{m_j}}\supset E_{t_0}$. Since $C^{t_0}_i$ is indecomposable and 
$C^{t_0}_i\subset E_{t_{m_j}}$, it holds that $C^{t_0}_i\subset C^{t_{m_j}}_{\ell_i}$
mod-$\mathcal{L}_n$
for some $\ell_i$,
by \cite[Theorem~1]{Ambetal}. Obviously then,
$\sum_iG(C_i^{t_0})\leq\sum_iG(C_i^{t_{m_j}})$, and in view of
\eqref{ElimG1}, \eqref{limpres1} \emph{this proves the claim in all cases}.

From the claim,  we deduce by uniqueness of a maximizer in \eqref{maxcompc1}
that $(F_i)_{i\in\NatZer}$ and $(C_i^{t_0})_{i\in\NatZer}$ are equivalent in $\dot{\mathcal{S}}$,
thereby proving $(ii)$. In particular
$\sum_i\mathcal{P}(F_i)=\mathcal{P}(E_{t_0})$, and since $\lim_j\mathcal{P}(C^{t_{m_j}}_i)\geq \mathcal{P}(F_i)$ for each $i$ 
by lower semi-continuity of the perimeter under local convergence in measure, 
we deduce from \eqref{lscp} that 
$\lim_j\mathcal{P}(C^{t_{m_j}}_i)=\mathcal{P}(F_i)$, thereby proving the second
half of $(iii)$. To prove the first half, observe that if $t_{m_j}>t_0$ then 
$E_{t_{m_j}}\subset E_{t_0}$. Therefore 
$C^{t_{m_j}}_i$, which is indecomposable, must be included in $C^{t_0}_\ell$
for some $\ell=\ell(i,j)$. But for $j$ large enough $C^{t_0}_\ell$
can be none but $F_i$,
and so $\lim_j\mathcal{L}_n(F_i\setminus C^{t_{m_j}}_i)\leq
\lim_j\mathcal{L}_n(E_{t_0}\setminus E_{t_{m_j}})=0$, by \eqref{ulsint}.
If on the contrary  $t_{m_j}<t_0$, then 
$E_{t_{m_j}}\supset E_{t_0}$ and each
$C^{t_0}_\ell$, which is indecomposable, must be included in $C^{t_{m_j}}_i$
for some  $i=i(\ell,j)$.
Necessarily then, it holds that $C^{t_0}_\ell=F_i$, and so
$\lim_j\mathcal{L}_n(C^{t_{m_j}}_i\setminus F_i)\leq
\lim_j\mathcal{L}_n( E_{t_{m_j}}\setminus E_{t_0})=0$, 
by \eqref{ulsint} again. Since every $F_i$ is a
$C^{t_0}_\ell$ for some $\ell=\ell(i)$, this proves $(iii)$.

To establish $(iv)$, note since
$\sum_{i=0}^\infty\mathcal{P}(C^{t_0}_i)<\infty$ that  to each $\varepsilon>0$
there is 
$i_0\geq1$ with $\sum_{i=i_0}^\infty\mathcal{P}(C^{t_0}_i)<\varepsilon$.
Then, by  lower-semi continuity of the
perimeter with respect to local convergence in measure, there is
$j_0=j_0(i_0)$ so large that
\[\sum_{i=0}^{i_0-1}\mathcal{P}(C^{t_{m_j}}_i)>\sum_{i=0}^{i_0-1}\mathcal{P}(C^{t_0}_i)-\varepsilon,\qquad j\geq j_0,
\]
and since $\lim_j\sum_i\mathcal{P}(C_i^{t_{m_j}})=\sum_i\mathcal{P}(C_i^{t_0})$
by \eqref{lscp}, we get for  $j$
large enough 
$\sum_{i=i_0}^\infty\mathcal{P}(C^{t_{m_j}}_i)\leq2\varepsilon$.
As $\varepsilon$ was arbitrary, this gives us the first limit in \eqref{estqcc}, which implies the second by the isoperimetric inequality
because $\mathcal{L}_n(C_i^{t_{m_j}})<\infty$ for $i\geq1$.
\end{proof}

We equip $\mathcal{S}$ with the distance $d_{\mathcal{S}}((E_i),(E_i^\prime))=\sup_id_1(\chi_{E_i},\chi_{E_i^\prime})$, where $d_1$ is a distance function
on $L^1_{loc}(\R^n)$, 
and we endow $\dot{\mathcal{S}}$ with the quotient topology ({\it i.e.} the
coarsest topology such that the canonical map
$\mathcal{S}\to\dot{\mathcal{S}}$ is continuous). Then, 
Proposition \ref{convmcc} may be construed as an approximate continuity result
of  the $M$-connected components of the suplevel sets of a homogeneous 
$BV$-function with respect to the level.
Recall that a map $\psi:\R\to \mathcal{E}$, with  $\mathcal{E}$ a topological space, is approximately contiuous at $t_0\in\R$ if, for
every neighborhood $V\subset\mathcal{E}$ of $\psi(t_0)$, it holds that  
\begin{equation}
\label{acE}
\lim_{r\to0}\frac{\mathcal{L}_1\left( \{t:\,|t-t_0|<r,\;\psi(t)\notin V\}\right)}{r}=0.
\end{equation}

\begin{theorem}
\label{approxceq}
Let $\phi\in\dot{BV}(\R^n)$ and $E_t$ its suplevel set
at level $t$, {\it cf.}  \eqref{defsl}. 
Then, 
the map  $\psi:\R\to\dot{\mathcal{S}}$ sending $t$ to the $M$-connected
components of $E_t$ is approximately continuous 
$\mathcal{L}_1$-a.e.
\end{theorem}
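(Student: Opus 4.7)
The plan is to combine Proposition \ref{convmcc} with the Lebesgue density theorem. Specifically, I will show that for each $\eta>0$ the restriction $\psi_{|\Sigma_\eta}$ is \emph{continuous} at every $t_0\in\Sigma_\eta$. Approximate continuity of $\psi$ at a Lebesgue density point of $\Sigma_\eta$ then follows immediately, and since $\mathcal{L}_1(\R\setminus\Sigma_\eta)<\eta$ for arbitrary $\eta>0$, this delivers the conclusion on a conull subset of $\R$.

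For the continuity statement, I argue by contradiction. Suppose $\psi_{|\Sigma_\eta}$ is discontinuous at some $t_0\in\Sigma_\eta$: then there exist an open neighborhood $V\subset\dot{\mathcal{S}}$ of $\psi(t_0)$ and a sequence $t_m\to t_0$ in $\Sigma_\eta$ with $\psi(t_m)\notin V$ for every $m$. Proposition \ref{convmcc}(ii) produces a subsequence $(t_{m_j})$ along which the $M$-connected components $(C_i^{t_{m_j}})_{i\in\NatZer}$ converge coordinatewise in local measure to a sequence $(F_i)$ that represents $\psi(t_0)$ in $\dot{\mathcal{S}}$. The crux is to upgrade this coordinatewise convergence to convergence in the metric $d_{\mathcal{S}}$, which requires uniform control in $i$. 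Here part (iii) gives, for each fixed $i$, $L^1_{loc}$-convergence $\chi_{C_i^{t_{m_j}}}\to\chi_{F_i}$ and hence $d_1(\chi_{C_i^{t_{m_j}}},\chi_{F_i})\to 0$; for the tail $i\geq p$, part (iv) together with the inherent property $\mathcal{L}_n(F_i)\to 0$ of sequences in $\mathcal{S}$ yields the bound $d_1(\chi_{C_i^{t_{m_j}}},\chi_{F_i})\le \mathcal{L}_n(C_i^{t_{m_j}})+\mathcal{L}_n(F_i)$, which is uniformly small for $p$ and $j$ large. Combining these, $\sup_i d_1(\chi_{C_i^{t_{m_j}}},\chi_{F_i})\to 0$, so $(C_i^{t_{m_j}})\to(F_i)$ in $\mathcal{S}$. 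Continuity of the canonical projection $\mathcal{S}\to\dot{\mathcal{S}}$ then gives $\psi(t_{m_j})\to\psi(t_0)$ in $\dot{\mathcal{S}}$, placing $\psi(t_{m_j})\in V$ eventually and contradicting the standing hypothesis. This uniform-in-$i$ promotion is the main obstacle of the proof: without the tail estimate of Proposition \ref{convmcc}(iv), the coordinatewise convergence of the $M$-connected components would not lift to convergence in $\mathcal{S}$ and hence not to $\dot{\mathcal{S}}$.

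Finally, let $t_0\in\Sigma_\eta$ be a Lebesgue density point of $\Sigma_\eta$ and $V$ a neighborhood of $\psi(t_0)$. By the continuity just established, there exists $r_0>0$ such that $\psi\bigl((t_0-r_0,t_0+r_0)\cap\Sigma_\eta\bigr)\subset V$, so for $0<r\le r_0$,
\[
\{t:|t-t_0|<r,\ \psi(t)\notin V\}\ \subset\ (t_0-r,t_0+r)\setminus\Sigma_\eta.
\]
The density condition at $t_0$ makes the right-hand side of $\mathcal{L}_1$-measure $o(r)$, yielding \eqref{acE}. By the Lebesgue density theorem, $\mathcal{L}_1$-a.e.\ point of $\Sigma_\eta$ is a density point, so $\psi$ is approximately continuous at $\mathcal{L}_1$-a.e.\ $t_0\in\Sigma_\eta$; setting $\eta=1/k$ and taking the union over $k\in\NatOne$ produces a conull subset of $\R$ on which $\psi$ is approximately continuous, completing the proof.
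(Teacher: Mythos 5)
Your proof is correct and follows essentially the same route as the paper's: continuity of $\psi$ on each $\Sigma_\eta$ via Proposition \ref{convmcc} (ii)--(iv), then approximate continuity at density points, then exhaustion over $\eta=1/k$ (the paper phrases this last step via Borel--Cantelli with $\eta=1/k^2$, but it is the same argument). Your explicit promotion of the coordinatewise convergence to convergence in $d_{\mathcal{S}}$ using the tail bound of part (iv) is exactly the detail the paper leaves implicit in the phrase ``by the definition of the quotient topology.''
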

\begin{proof}
By Proposition \ref{convmcc} points $(ii)$, $(iv)$, 
and the definition of the quotient topology, $\psi$ is continuous on $\Sigma_\eta$ for each $\eta>0$.
So, when $t_0$ is a density point of $\Sigma_\eta$ for some $\eta>0$, then 
\eqref{acE} holds. 
But if $D_\eta$ denotes the set of such density 
points, then $\R\setminus(\cup_{k\geq1} D_{1/k^2})$ has measure zero, by the Borel-Cantelli lemma.
Hence \eqref{acE} holds a.e.
\end{proof}

\section{Loop decomposition of divergence-free planar measures}
 \label{loppdecSM}
 In this section, we  make  use of 
Lemma \ref{coarea} and Proposition \ref{convmcc} when $n=2$ to decompose 
gradients of functions in $\dot{BV}(\R^2)$ as a continuous sum of measures of the form \eqref{dermeas}, with $\partial _M E$ a rectifiable Jordan curve.
The results in this section, up to and including Proposition \ref{convmcur}, could be developed in an analogous way for $n\geq3$, replacing Jordan curves with Jordan boundaries (see \cite{Ambetal}). 
However, we stick with $n=2$ since our main application, stated in Theorem \ref{rep}, is to describe divergence-free vector fields whereas the connection 
with gradients, stated in Lemma \ref{2Dsole}, only works in the plane.

\begin{lemma}\label{normalsubset}
	Let $E, F\subset\R^2$ be sets  of finite perimeter such that $\mathcal{L}_2(E\setminus F)=0$. 
	Then for $\mathcal{H}^1$-a.e. $x\in\partial_M E\cap\partial_M F$, it holds that $\bs{\nu}_F(x)=\bs{\nu}_E(x)$.
\end{lemma}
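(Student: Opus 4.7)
The plan is to reduce the equality of generalized inner normals to the classical blow-up characterization of the reduced boundary due to De Giorgi. Recall from \cite[Lemma 5.5]{evans_gariepy_2015} (quoted after \eqref{GGma}) that the reduced boundaries $\partial^* E$ and $\partial^* F$ differ from $\partial_M E$ and $\partial_M F$, respectively, by sets of $\mathcal{H}^1$-measure zero. Thus, for $\mathcal{H}^1$-a.e. $x\in\partial_M E\cap\partial_M F$, the point $x$ lies in $\partial^*E\cap\partial^*F$, and at such a point De Giorgi's theorem (see, e.g., \cite[Theorem 5.13]{evans_gariepy_2015}) asserts that, as $r\to 0^+$, the rescaled sets
\[
E_{x,r}:=\{y\in\R^2:\ x+ry\in E\},\qquad F_{x,r}:=\{y\in\R^2:\ x+ry\in F\}
\]
converge in $L^1_{loc}(\R^2)$ to the half-planes
\[
H_E(x):=\{y\in\R^2:\ y\cdot\bs{\nu}_E(x)\geq 0\},\qquad
H_F(x):=\{y\in\R^2:\ y\cdot\bs{\nu}_F(x)\geq 0\}.
\]

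Now I would use the hypothesis $\mathcal{L}_2(E\setminus F)=0$, which says $\chi_E\leq\chi_F$ $\mathcal{L}_2$-a.e.\ on $\R^2$, and hence $\chi_{E_{x,r}}\leq\chi_{F_{x,r}}$ $\mathcal{L}_2$-a.e.\ for every $r>0$. Passing to the $L^1_{loc}$ limit as $r\to 0^+$ yields $\chi_{H_E(x)}\leq\chi_{H_F(x)}$ $\mathcal{L}_2$-a.e., i.e., $H_E(x)\subset H_F(x)$ mod-$\mathcal{L}_2$.

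Finally, two closed half-planes of $\R^2$ through the origin that are nested mod-$\mathcal{L}_2$ must coincide mod-$\mathcal{L}_2$ (otherwise their symmetric difference would contain an open half-plane, which has infinite Lebesgue measure). Therefore $H_E(x)=H_F(x)$ mod-$\mathcal{L}_2$, which forces $\bs{\nu}_E(x)=\bs{\nu}_F(x)$. The only delicate step is the appeal to De Giorgi's blow-up theorem and the identification of the reduced and measure-theoretic boundaries up to $\mathcal{H}^1$-null sets; once these standard facts are invoked, the rest is a direct monotonicity-of-limits argument.
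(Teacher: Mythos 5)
Your proof is correct, but it routes through a different key fact than the paper does. You invoke Federer's identification of $\partial_M$ with the reduced boundary up to $\mathcal{H}^1$-null sets and then De Giorgi's blow-up theorem, so that at a common reduced-boundary point both sets converge under rescaling to half-planes; the inclusion $E\subset F$ mod-$\mathcal{L}_2$ passes to the $L^1_{loc}$ limit and forces the half-planes to be nested, hence equal. The paper instead works directly with the half-disk densities $L_G(x,v)$, using the fact that $L_G(x,\bs{\nu}_G(x))=1$ and $L_G(x,-\bs{\nu}_G(x))=0$ at $\mathcal{H}^1$-a.e.\ point of $\partial_M G$, and derives a contradiction from a truncated cone of positive angle $\theta$ (getting $L_E(x,\bs{\nu}_E)\leq 1-\theta/\pi<1$) when the normals differ. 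Both arguments rest on the same De Giorgi--Federer structure theory; yours is cleaner in that the conclusion falls out of monotonicity of limits rather than an explicit angle estimate, while the paper's stays at the level of densities and avoids quoting the full blow-up theorem. One minor imprecision in your last step: if the two half-planes through the origin are distinct, their difference in general contains an open \emph{sector} of positive angle (a full open half-plane only when the normals are antipodal); but since any such sector has positive Lebesgue measure, your conclusion stands.
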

\begin{proof}
		Given $\epsilon>0$, $x,v\in\R^2$ with $v\neq0$ and $G\subset\R^2$, define the half-disk 
		\begin{equation}\label{defhp}
		H_\epsilon(x,v):=\{y\in \B(\epsilon,x)\ :\ (y-x)\cdot v>0\},
		\end{equation} 
		and let
		$$
		L_G(x,v):=\lim_{\epsilon\rightarrow 0}\frac{\mathcal{L}_2(H_\epsilon(x,v)\cap G)}{\mathcal{L}_2(H_\epsilon(x,v))}=
\lim_{\epsilon\rightarrow 0}\frac{2\mathcal{L}_2(H_\epsilon(x,v)\cap G)}{\pi\epsilon^2}
		$$
		whenever the limit exists.
		Assume $G$ has finite perimeter. Then, for $\mathcal{H}^1$-a.e. $x\in\partial_M G$, $\bs{\nu}_G(x)$ is the unique unit vector that satisfies 
		$$
		L_G(x,\bs{\nu}_G(x))=1\qquad \text{ and }\qquad L_G(x,-\bs{\nu}_G(x))=0,
		$$
		(see \cite[Proposition 10.3.4 and Theorem 10.3.2]{Attetal2006}
or \cite[Thm. 5.6.5]{Ziemer}).
		Since $E$ is included in $F$ except for a set of
$\mathcal{L}_2$-measure zero, clearly $L_E(x,-\bs{\nu}_F(x))=0$ 
for $\mathcal{H}^1$-a.e. $x\in\partial_M F$. Let $Z\subset\partial_M F$
be the set consisting of such $x$. Moreover, $L_E(x,\bs{\nu}_E(x))=1$
for $\mathcal{H}^1$-a.e. $x\in\partial_M E$, and we let
$Y\subset\partial_M F$ be the set consisting of such $x$. Now,
if for $x\in X\cap Y$ we had $\bs{\nu}_E\neq\bs{\nu}_F$, the truncated positive cone 
$C_\epsilon:=H_\epsilon(x,-\bs{\nu}_F)\cap H_\epsilon(x,\bs{\nu}_E)$ would have strictly 
positive  angle, say  $\theta$, and since 
\[
\limsup_{\epsilon\to0} \frac{2\mathcal{L}_2(H_\epsilon(x,\bs{\nu}_E)\cap E\cap C_\epsilon)}{\pi\epsilon^2} = 
\limsup_{\epsilon\to0} \frac{2\mathcal{L}_2(E\cap C_\epsilon)}{\pi\epsilon^2}\leq L_E(x,-\bs{\nu}_F)=0,
\]
we would have that
\begin{multline*}
L_E(x,\bs{\nu}_E)= \lim_{\epsilon\to0} \frac{\mathcal{L}_2(H_\epsilon(x,\bs{\nu}_E)\cap (E\setminus C_\epsilon))}{\mathcal{L}_2(H_\epsilon(x,\bs{\nu}_E))}
\leq\limsup_{\epsilon\to0} \frac{\mathcal{L}_2(H_\epsilon(x,\bs{\nu}_E)\setminus C_\epsilon)}{\mathcal{L}_2(H_\epsilon(x,\bs{\nu}_E))}
\leq 1-\frac{\theta}{\pi},
\end{multline*}
a contradiction.
\end{proof}

Let us make one more piece of notation:
for $\Gamma\subset\R^2$ a Jordan curve,  
we denote by $\text{int}(\Gamma)$ (resp. $\text{ext}(\Gamma)$)
the bounded (resp. unbounded) connected component of 
$\R^2\setminus \Gamma$.

\begin{lemma}\label{Gamma}
	If $\Gamma\subset\R^2$ is a rectifiable Jordan curve, then $\partial_M(\text{\rm int}(\Gamma))=\Gamma$ mod-$\mathcal{H}^1$.
\end{lemma}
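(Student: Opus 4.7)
The plan is to prove the two inclusions separately, the inclusion $\partial_M(\text{int}(\Gamma))\subset \Gamma$ exactly (no null-set qualifier) and the reverse inclusion only mod-$\mathcal{H}^1$, which is the substantive part. The first inclusion is immediate: for any measurable $E\subset\R^2$, the set $\partial_M E$ defined in \eqref{defthb} is contained in the topological boundary of $E$; and since int$(\Gamma)$ is by definition the bounded connected component of the open set $\R^2\setminus\Gamma$, the Jordan curve theorem yields $\partial(\text{int}(\Gamma))=\Gamma$. Thus it remains only to show $\mathcal{H}^1(\Gamma\setminus\partial_M(\text{int}(\Gamma)))=0$.

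For the nontrivial direction, I would show that int$(\Gamma)$ is a set of finite perimeter with $|\grad\chi_{\text{int}(\Gamma)}|=\mathcal{H}^1\mathcal{b}\Gamma$; combined with identity \eqref{GGma} this forces $\mathcal{H}^1\mathcal{b}\partial_M(\text{int}(\Gamma))=\mathcal{H}^1\mathcal{b}\Gamma$, and the conclusion follows from the inclusion established above. To carry this out, let $\bs{\gamma}:[0,L]\to\R^2$ be a unit-speed parametrization of $\Gamma$ with $\bs{\gamma}(0)=\bs{\gamma}(L)$ and $L=\mathcal{H}^1(\Gamma)$, injective on $[0,L)$. The classical Gauss-Green theorem for rectifiable Jordan curves, already invoked in the remark following \eqref{dermeasd} (see \cite[Thm.~10--43]{Apostol1957}), yields that for every $\bs\varphi\in C_c^1(\R^2,\R^2)$
\[
\int \chi_{\text{int}(\Gamma)}\,\mbox{div}\,\bs{\varphi}\,d\mathcal{L}_2 \;=\; \pm\int_0^L \bs{\varphi}(\bs{\gamma}(t))\cdot \mathfrak{R}\bs{\gamma}'(t)\,dt,
\]
the sign depending on the orientation of $\bs{\gamma}$. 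Since $|\mathfrak{R}\bs{\gamma}'(t)|=1$ a.e., the absolute value of the right-hand side is at most $L\sup|\bs{\varphi}|$. By the characterization \eqref{TVviadiv}, this shows that $\chi_{\text{int}(\Gamma)}\in\dot{BV}(\R^2)$ with $\|\grad\chi_{\text{int}(\Gamma)}\|_{TV}\leq L$, and moreover, using the injectivity of $\bs{\gamma}$ on $[0,L)$, the right-hand side can be rewritten as $\pm\int_{\Gamma}\bs{\varphi}\cdot\bs{n}\,d\mathcal{H}^1$ for a unit vector field $\bs{n}$ defined $\mathcal{H}^1$-a.e.\ on $\Gamma$. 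Therefore $\grad\chi_{\text{int}(\Gamma)}=\pm\bs{n}\,\mathcal{H}^1\mathcal{b}\Gamma$, and taking total variation we conclude $|\grad\chi_{\text{int}(\Gamma)}|=\mathcal{H}^1\mathcal{b}\Gamma$.

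Finally, applying \eqref{GGma} to $E=\text{int}(\Gamma)$ gives $\mathcal{H}^1\mathcal{b}\partial_M(\text{int}(\Gamma))=|\grad\chi_{\text{int}(\Gamma)}|=\mathcal{H}^1\mathcal{b}\Gamma$, so $\mathcal{H}^1(\Gamma\setminus\partial_M(\text{int}(\Gamma)))=0$ in view of $\partial_M(\text{int}(\Gamma))\subset\Gamma$. The main obstacle is that the Gauss-Green theorem in the generality needed here (rectifiable, not merely piecewise $C^1$, Jordan curve) is nonelementary; however, the version in Apostol already treats precisely this case, and the paper relies on it elsewhere, so invoking it is legitimate. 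One could alternatively proceed through approximate tangents to $\Gamma$ and a local separation argument based on the Jordan curve theorem to conclude that each of int$(\Gamma)$ and ext$(\Gamma)$ has density at least $1/2$ at $\mathcal{H}^1$-a.e.\ $x\in\Gamma$, but such an approach is more delicate than the Gauss-Green route.
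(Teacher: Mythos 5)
Your proof is correct, but it follows a genuinely different route from the paper's. The paper disposes of the lemma in three lines by invoking the structure theory of \cite{Ambetal} directly: Proposition~2 and Theorem~7 there give that $\mathrm{int}(\Gamma)$ has finite perimeter and that $\partial_M(\mathrm{int}(\Gamma))$ coincides mod-$\mathcal{H}^1$ with \emph{some} rectifiable Jordan curve $\tilde\Gamma$; since $\tilde\Gamma\subset\Gamma$ up to an $\mathcal{H}^1$-null set, a density-plus-compactness argument gives the honest inclusion $\tilde\Gamma\subset\Gamma$, and the Jordan curve theorem then forces $\tilde\Gamma=\Gamma$ (a Jordan curve cannot properly contain another). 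Your argument instead computes the perimeter measure exactly: the classical Gauss--Green theorem for rectifiable Jordan curves (Apostol, Theorem 10--43, which the paper itself cites in the remark after \eqref{dermeasd}, so there is no circularity) identifies $\grad\chi_{\mathrm{int}(\Gamma)}$ with $\pm\bs{n}\,\mathcal{H}^1\mathcal{b}\Gamma$, whence $|\grad\chi_{\mathrm{int}(\Gamma)}|=\mathcal{H}^1\mathcal{b}\Gamma$, and \eqref{GGma} converts this into the desired statement about $\partial_M$. Both proofs lean on one nontrivial external input — the paper on the indecomposability/Jordan-boundary machinery of \cite{Ambetal}, you on the rectifiable-curve Green's theorem — and both first reduce to the easy inclusion $\partial_M(\mathrm{int}(\Gamma))\subset\Gamma$. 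Your version has the advantage of being more classical and of producing the stronger identity $|\grad\chi_{\mathrm{int}(\Gamma)}|=\mathcal{H}^1\mathcal{b}\Gamma$ (which also recovers $\mathcal{P}(\mathrm{int}(\Gamma))=\mathcal{H}^1(\Gamma)$, not merely $\leq$); the paper's version is shorter given that \cite{Ambetal} is already a standing reference throughout Section~\ref{loppdecSM}. The only point worth making explicit in your write-up is the passage from the parametrized integral $\int_0^L\bs{\varphi}(\bs{\gamma}(t))\cdot\mathfrak{R}\bs{\gamma}'(t)\,dt$ to $\int_\Gamma\bs{\varphi}\cdot\bs{n}\,d\mathcal{H}^1$, which uses the area formula \eqref{areas} together with injectivity on $[0,L)$ to see that $N(\bs{\gamma},x)=1$ for $\mathcal{H}^1$-a.e.\ $x\in\Gamma$; you gesture at this correctly, so the gap is only expository.
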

\begin{proof}
	Clearly $\partial_M(\text{int}(\Gamma))$ is a subset of the topological boundary of $\text{int}(\Gamma)$ which is $\Gamma$. 
Now, by \cite[Proposition 2 \& Theorem 7]{Ambetal}, $\partial_M(\text{int}(\Gamma))$ is equal to a rectifiable Jordan curve $\tilde{\Gamma}$ mod-$\mathcal{H}^1$.
	
	Thus, $\mathcal{H}^1(\tilde{\Gamma}\setminus\Gamma)=0$ whence 
	$\tilde{\Gamma}\cap\Gamma$ is dense in $\tilde{\Gamma}$, and so $\tilde{\Gamma}\subset\Gamma$ by compactness of $\Gamma$.
	Therefore, by the Jordan curve theorem, $\tilde{\Gamma}=\Gamma$ which implies our lemma.	
\end{proof}


In \cite[Corollary 1]{Ambetal}, it is shown that the measure-theoretical boundary of a planar set $E$ with finite perimeter can be decomposed
into countably many  Jordan curves.
The next lemma elaborates on this result and relates the latter
decomposition with the $M$-connected components of the complements 
of the $M$-connected components of $E$.
\begin{lemma}\label{Jordan_decomposition}
The measure-theoretical boundary of a set $E\subset\R^2$ of finite perimeter decomposes mod-$\mathcal{H}^1$ as 
the union of two countable families of rectifiable Jordan curves $\{ \Gamma^+_k\}_{k\in K}$ and $\{\Gamma^-_j \}_{j\in J}$, with $K,J\,\subset\{1,2,3\cdots\}$, such that
\begin{align}
\grad\chi_E  &= \sum_{k\in K}\grad\chi_{\mathrm{int}(\Gamma^+_k)} -\sum_{j\in J}\grad\chi_{\mathrm{int}(\Gamma^-_j)}	\label{eq:Jordan_grad} \\
\mathcal{H}^1\mathcal{b}(\partial_M E) &= \sum_{k\in K}\mathcal{H}^1\mathcal{b}\Gamma^+_k +\sum_{j\in J}\mathcal{H}^1\mathcal{b}\Gamma^-_j. \label{eq:Jordan_H1}
\end{align} 
Moreover, if we let
\begin{equation}
\label{defIke}
I_k:=\{j\in J : \text{\rm int}(\Gamma^-_j)\subset\text{\rm int}(\Gamma^+_k) \}\qquad
\mbox{\rm and}\qquad Y_k=\text{\rm int}(\Gamma^+_k)\setminus\cup_{j\in I_k}\text{\rm int}(\Gamma^-_j),
\end{equation} 
 as well as
\begin{equation}
\label{defTf}
Y_0:=\bigcap_{j\in J}\text{\rm ext}(\Gamma^-_j)
\ \text{if}\ \mathcal{L}_2(E)=\infty \ \text{\rm and}\    Y_0:=\emptyset\ 
\text{ otherwise},
\end{equation}
then the $Y_k$ for $k\in K$, together with $Y_0$ if nonempty, are the $M$-connected components of $E$. In particular, it holds that 
\begin{equation}
\label{decE}
E=\left(\bigcup_{k\in K}Y_k\right)\cup Y_0\quad
\mbox{\rm mod-}\mathcal{L}_2.
\end{equation}
In addition, if we put 
\begin{equation}
\label{defIkg}
\tilde I_{k}:=\{j\in I_{k}: \ \text{there is no } k'\in K
\text{ such that} \ \text{\rm int}(\Gamma_{k}^+)\supsetneq
\text{\rm int}(\Gamma_{k'}^+)\supset \text{\rm int}(\Gamma_j^-)\}
\end{equation}
along with 
\begin{equation}
\label{defJt0}
I_\infty:=\{j\in J: \ \text{there is no } k\in K
\text{ such that} \ \text{\rm int}(\Gamma_{k}^+)\supset
\text{\rm int}(\Gamma_{j}^-)\},
\end{equation}
then $I_\infty\neq\emptyset$ if and only if $\mathcal{L}_2(E)=\infty$
and each $j\in J$ belongs to $\tilde I_{k}$ for some unique $k$ or else to
 $I_\infty$. Furthermore, for each $k\in K$,
the sets $\{\mathrm{int }(\Gamma^-_i)\}_{i\in {\tilde I}_k}$ together
with $\mathrm{ext }(\Gamma^+_k)$ are the $M$-connected components of
$\R^2\setminus Y_k$, and if $\mathcal{L}_2(E)=\infty$ then
the $\{\mathrm{int }(\Gamma^-_j)\}_{j\in {I\infty}}$ are the 
$M$-connected components of $\R^2\setminus Y_0$.
\end{lemma}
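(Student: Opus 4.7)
The plan is to invoke \cite[Corollary 1]{Ambetal} as the base decomposition of $\partial_M E$ and then carry out a structural analysis of the resulting nested Jordan curves. First, I would note that \cite[Corollary 1]{Ambetal} already delivers the two at-most-countable families of rectifiable Jordan curves, identity \eqref{eq:Jordan_grad}, and the pointwise representation
\[
\chi_E = \sum_{k \in K} \chi_{\mathrm{int}(\Gamma_k^+)} - \sum_{j \in J} \chi_{\mathrm{int}(\Gamma_j^-)} + c \quad \text{mod-}\mathcal{L}_2,
\]
with $c=0$ when $\mathcal{L}_2(E)<\infty$ and $c=1$ otherwise, the constant being fixed by comparing values ``at infinity'' (the difference has vanishing distributional gradient, hence is a.e.\ constant). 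Identity \eqref{eq:Jordan_H1} is then immediate from Lemma~\ref{Gamma}, which identifies $\mathcal{H}^1\mathcal{b}\partial_M(\mathrm{int}(\Gamma))$ with $\mathcal{H}^1\mathcal{b}\Gamma$ for each such Jordan curve $\Gamma$.

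Next I would exploit the nesting tree of the Jordan curves: by the Jordan curve theorem, any two among $\{\Gamma_k^+,\Gamma_j^-\}$ have either disjoint or nested interiors, so the collection inherits a forest structure under inclusion of interiors. The alternation of signs forced by the pointwise formula for $\chi_E$ then implies that each $\Gamma_j^-$ has a unique smallest $\Gamma_k^+$ strictly containing it (if any), yielding that $j\in J$ belongs to $\tilde{I}_k$ for a unique $k\in K$, or else to $I_\infty$; furthermore $I_\infty\neq\emptyset$ iff the constant $c$ above equals $1$, equivalently iff $\mathcal{L}_2(E)=\infty$. Combining the pointwise formula for $\chi_E$ with the forest structure, the sets $Y_k$ and $Y_0$ are seen to be pairwise disjoint mod-$\mathcal{L}_2$ and to cover $E$ mod-$\mathcal{L}_2$, proving \eqref{decE}.

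To identify the $Y_k$ (and $Y_0$) as the $M$-connected components of $E$, I would first establish $\partial_M Y_k=\Gamma_k^+\cup\bigcup_{j\in\tilde{I}_k}\Gamma_j^-$ mod-$\mathcal{H}^1$ (the non-direct $\Gamma_j^-$ disappear because they are buried inside some $\mathrm{int}(\Gamma_{k'}^+)\subset\mathrm{int}(\Gamma_k^+)$ and do not bound $Y_k$; Lemma~\ref{normalsubset} tracks orientations of normals on shared pieces of boundary). Summation yields $\sum_{k\in K}\mathcal{P}(Y_k)+\mathcal{P}(Y_0)=\mathcal{P}(E)$. Indecomposability of each $Y_k$ then follows by contradiction: a splitting $Y_k=A\sqcup B$ with $\mathcal{P}(A)+\mathcal{P}(B)=\mathcal{P}(Y_k)$ would, via \cite[Corollary 1]{Ambetal} applied to $A$ and $B$, refine the single connected Jordan curve $\Gamma_k^+$ into at least two disjoint Jordan curves, which is impossible. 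Uniqueness of $M$-connected components \cite[Theorem~1]{Ambetal} then closes this half of the claim.

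The analysis of $\R^2\setminus Y_k$ proceeds analogously. Mod-$\mathcal{L}_2$ one has $\R^2\setminus Y_k=\mathrm{ext}(\Gamma_k^+)\cup\bigcup_{j\in I_k}\mathrm{int}(\Gamma_j^-)$, and for $j\in I_k\setminus\tilde{I}_k$ the connected set $\mathrm{int}(\Gamma_{k'}^+)$ witnessing $j\notin\tilde{I}_k$ is disjoint from $Y_k$ (two different $M$-components of $E$), and thus lies inside some $\mathrm{int}(\Gamma_i^-)$ with $i\in\tilde{I}_k$, whence so does $\mathrm{int}(\Gamma_j^-)$; therefore $\R^2\setminus Y_k$ is already covered mod-$\mathcal{L}_2$ by $\mathrm{ext}(\Gamma_k^+)$ and the $\mathrm{int}(\Gamma_i^-)$ for $i\in\tilde{I}_k$. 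Each of these is indecomposable by the same Jordan-curve argument, and the case of $\R^2\setminus Y_0$ is identical with $I_\infty$ in place of $\tilde{I}_k$. \textbf{The main obstacle} I anticipate is formalizing the ``a single Jordan curve cannot be split'' step: conceptually transparent from the Jordan curve theorem, it nevertheless requires a careful invocation of the uniqueness in the Jordan-curve decomposition of \cite{Ambetal} combined with the connectedness of $\Gamma_k^+$.
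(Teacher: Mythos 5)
There is a genuine gap at the very first step: you assert that \cite[Corollary 1]{Ambetal} ``already delivers \dots identity \eqref{eq:Jordan_grad}.'' It does not. That corollary gives the set-theoretic decomposition $\partial_M E=\bigcup_k\Gamma_k^+\cup\bigcup_j\Gamma_j^-$ mod-$\mathcal{H}^1$, the perimeter identity (hence \eqref{eq:Jordan_H1}), the nesting and alternation properties, and the identification of the $Y_k$ (and $Y_0$) as the $M$-connected components of $E$ --- but not the signed measure identity \eqref{eq:Jordan_grad}, which is precisely the content that makes this lemma usable for decomposing $\grad\chi_E$ into oriented loops. Establishing \eqref{eq:Jordan_grad} amounts to showing that $\bs{\nu}_E=\bs{\nu}_{\mathrm{int}(\Gamma_{k}^+)}$ $\mathcal{H}^1$-a.e.\ on $\Gamma_{k}^+$ and $\bs{\nu}_E=-\bs{\nu}_{\mathrm{int}(\Gamma_{j}^-)}$ $\mathcal{H}^1$-a.e.\ on $\Gamma_{j}^-$, and this is where essentially all the work of the paper's proof lies: one introduces intermediate sets such as $F_{k_0}=\mathrm{int}(\Gamma_{k_0}^+)\cap E$ and $Y_{k_0}$, verifies via \cite[Theorem 5]{Ambetal} that their measure-theoretic boundaries are the expected unions of curves, and then transports the normal through a chain of inclusions using Lemma~\ref{normalsubset} (with a separate chain through $Y_0$ and $\mathrm{ext}(\Gamma_{j_0}^-)$ when $j_0\in I_\infty$). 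Your subsequent derivation of the pointwise formula $\chi_E=\sum_k\chi_{\mathrm{int}(\Gamma_k^+)}-\sum_j\chi_{\mathrm{int}(\Gamma_j^-)}+c$ relies on \eqref{eq:Jordan_grad} having zero-gradient difference, so it inherits the gap rather than filling it.

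A secondary issue: you spend effort re-proving that the $Y_k$ are the $M$-connected components via a splitting argument (``a splitting $Y_k=A\sqcup B$ \dots would refine the single connected Jordan curve $\Gamma_k^+$''), but this identification is already part of the quoted corollary, and the splitting argument as sketched is not airtight ($\partial_M A$ and $\partial_M B$ need not literally partition $\Gamma_k^+$ into sub-curves; one must argue through the uniqueness of the Jordan decomposition and the disjointness mod-$\mathcal{H}^1$ of the boundaries, as in \cite[Proposition 3]{Ambetal}). Your analysis of $\R^2\setminus Y_k$ and of $I_\infty$ versus $\tilde I_k$ is on the right track and close to the paper's argument (including the observation that a non-direct $\mathrm{int}(\Gamma_j^-)$ must be buried inside some $\mathrm{int}(\Gamma_i^-)$ with $i\in\tilde I_k$), but the proposal as a whole cannot stand without a proof of the orientation statement underlying \eqref{eq:Jordan_grad}.
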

\begin{proof}
	By \cite[Corollary 1]{Ambetal}, there exists two families $\{ \Gamma^+_k\}_{k\in K}$ and $\{\Gamma^-_j \}_{j\in J}$ of countably many rectifiable Jordan curves (we can always take $K,J\subset \{1,2,3\cdots\})$, satisfying:
	\begin{enumerate}
		\item $\partial_M E=\bigcup_k\Gamma^+_k\cup\bigcup_j\Gamma^-_j$ mod-$\mathcal{H}^1$,
		\item For any two $\text{int}(\Gamma^+_k)$ and $\text{int}(\Gamma^+_l)$ either one is contained in the other or they are disjoint. 
		Similarly, for any two $\text{int}(\Gamma^-_j)$ and $\text{int}(\Gamma^-_i)$ either one is contained in the other or they are disjoint. 
		\item $\mathcal{H}^1(\partial_M E)=\sum_k\mathcal{H}^1(\Gamma_k^+)+\sum_j\mathcal{H}^1(\Gamma^-_j)$, in particular the curves are disjoint 
mod-$\mathcal{H}^1$.
		\item If $l\neq k$ and $\text{int}(\Gamma^+_k)\subset\text{int}(\Gamma^+_l)$ then there exists a $\text{int}(\Gamma^-_j)$ with the property
that $\text{int}(\Gamma^+_k)\subset\text{int}(\Gamma^-_j)\subset\text{int}(\Gamma^+_l)$. 
		Analogously, if $j\neq i$ and $\text{int}(\Gamma^-_j)\subset\text{int}(\Gamma^-_i)$ then there exists a $\text{int}(\Gamma^+_k)$ such that $\text{int}(\Gamma^-_j)\subset\text{int}(\Gamma^+_k)\subset\text{int}(\Gamma^-_i)$.
		\item The $Y_k$ defined in \eqref{defIke}, along with 
$Y_0$ defined in \eqref{defTf} if nonempty\footnote{In \cite[Cor. 1]{Ambetal},
the set $Y_0$ is not introduced, but an abstract  ``Jordan curve" $\Gamma_\infty^+$,
reducing to  the point at $\infty$ ({\it i.e.} having zero length
and interior $\R^2$), is allowed  in case $\mathcal{L}_2(E)=\infty$, 
so that $Y_0$ corresponds to
$\text{int}(\Gamma^+_\infty)\setminus\cup_j\text{int}(\Gamma_j^-)$.}, are the $M$-connected components of $E$, in particular \eqref{decE} holds. Note
that  if $\mathcal{L}_2(E)=\infty$, then $Y_0$ is the $M$-connected component 
of infinite $\mathcal{L}_2$-measure.
Note also that $\mathcal{L}_2(E)=\infty$ (equivalently: $Y_0\neq\emptyset$)
if and only if there exists a $\text{int}(\Gamma^-_j)$ not contained in any $\text{int}(\Gamma^+_k$), that is: if and only if $I_\infty\neq\emptyset$.
	\end{enumerate}
	
	It remains for us to show that this decomposition satisfies 
\eqref{eq:Jordan_grad} and that the last two assertions after \eqref{defJt0}
do hold.
In view of \eqref{dermeasd} and \eqref{eq:Jordan_H1},
it is enough for \eqref{eq:Jordan_grad} to hold  that 
	\begin{enumerate}[label=(\roman*)]
		\item 
for any $k\in K$, $\grad\chi_E\mathcal{b}\Gamma_k^+=\grad\chi_{\text{int}(\Gamma_k^+)}$,
		\item 
for any $j\in J$, $\grad \chi_E\mathcal{b}\Gamma_j^-=-\grad\chi_{\text{int}(\Gamma_j^-)}$.
	\end{enumerate}
To obtain (i) and (ii), we will prove that for each $k_0\in K$
(resp. $j_0\in J$) and $\mathcal{H}^1$-a.e. $x\in\Gamma^+_{k_0}$ (resp. $\Gamma^-_{j_0}$), we have $\bs{\nu}_E(x)=\bs{\nu}_{\text{int}\Gamma^+_{k_0}}(x)$
(resp. $\bs{\nu}_E(x)=-\bs{\nu}_{\text{int}\Gamma^-_{j_0}}(x)$).

	
	Fix $k_0\in K$ and	
	let $F_{k_0}:=\text{int}(\Gamma^+_{k_0})\cap E$. Define  $\tilde K:=\{k\in K : \text{int}(\Gamma^+_k)\subset\text{int}(\Gamma^+_{k_0}) \}$ and $\tilde J:=\bigcup_{k\in\tilde K}I_k$.
	The pair of families of rectifiable Jordan curves 
$\{ \Gamma^+_k\}_{k\in \tilde  K}$, $\{\Gamma^-_j \}_{j\in \tilde J}$  
{\it a fortiori} meets properties (b) and (d) above when
 the indices $k$, $l$ and
$j$, $i$ range over $\tilde  K$ and 
$\tilde J$, respectively.
Also, by (c), these families are such that 
	\begin{enumerate}
		\item[(f)] each two different Jordan curves are disjoint mod-$\mathcal{H}^1$, 
		\item[(g)] $\sum_k\mathcal{H}^1(\Gamma_k)+\sum_j\mathcal{H}^1(\Gamma^-_j)<\infty$, $k\in\tilde K$, $j\in\tilde J$.
	\end{enumerate}
	Moreover, we get from (b)  and \eqref{decE} that
        \begin{enumerate}
\item[(h)] $F_{k_0}=\bigcup_{k\in\tilde{K}} Y_k$ mod-$\mathcal{L}_2$. 
       \end{enumerate}
Properties (b), (d), (f), (g) and (h) show that $F_{k_0}$, $\{ \Gamma^+_k\}_{k\in \tilde  K}$ and $\{\Gamma^-_j \}_{j\in \tilde J}$  
	satisfy  the assumptions of
\cite[Theorem 5]{Ambetal}. The latter implies that $F_{k_0}$ has
finite perimeter and that $\partial_M F_{k_0}=\bigcup_{k\in\tilde{K}}\Gamma^+_k\cup\bigcup_{j\in\tilde{J}}\Gamma^-_j$ mod-$\mathcal{H}^1$.
	Applying Lemma \ref{normalsubset} twice, we now get
that $\bs{\nu}_E(x)=\bs{\nu}_{F_{k_0}}(x)=\bs{\nu}_{\text{int}(\Gamma^+_{k_0})}(x)$ for $\mathcal{H}^1$-a.e. $x\in(\partial_M F_{k_0}\cap\partial_M E\cap\partial_M\text{int}(\Gamma^+_{k_0}))$, and by Lemma \ref{Gamma}
this intersection reduces to $\Gamma^+_{k_0}$ mod-$\mathcal{H}^1$. This proves (i).
	

To prove (ii), pick $j_0\in J$ and assume first that $j_0\notin I_\infty$, 
so there is  ${k_0}\in K$ such that $\text{int}(\Gamma^+_{k_0})\supset \text{int}(\Gamma^-_{j_0})$. As there is no infinite sequence 
$\text{\rm int}(\Gamma_{{\ell}_1}^+)\supsetneq\text{\rm int }(\Gamma_{{\ell}_2}^+)\supsetneq\cdots$ each element of which contains $\text{int}(\Gamma^-_{j_0})$ (otherwise the isoperimetric inequality would imply that
$\pi^{1/2}\mathcal{H}^1(\Gamma^+_{{\ell}_i})\geq\mathcal{L}^{1/2}_2(\text{int}(\Gamma_{j_0}^-))>0$ for all $i$ and this would contradict (g)),
we may choose  ${k_0}$ so that $\text{int}(\Gamma^+_{k_0})$ is smallest 
with the property that $\text{int}(\Gamma^+_{k_0})\supset \text{int}(\Gamma^-_{j_0})$ or, equivalently, such that $j_0\in \tilde{I}_{k_0}$ defined in \eqref{defIkg}. Note that such a $k_0$ is unique, by (b), thereby proving
in passing the next-to-last  assertion after \eqref{defJt0}.

Now, the sets $\{\text{int}(\Gamma^-_j)\}_{j\in\tilde I_{k_0}}$ are disjoint, by (b) and (d). Moreover, for each
$i\in I_{k_0}$,  there is $j\in\tilde{I}_{k_0}$ such that
 $\text{int}(\Gamma^-_i)\subset\text{int}(\Gamma^-_j)$, because of (d) and  
the fact that there is no infinite sequence
$\text{int}(\Gamma_{j_1}^-)\subsetneq\text{int}(\Gamma_{k_1}^+)\subsetneq\text{int}(\Gamma^-_{j_2})\subsetneq
\text{int}(\Gamma_{k_2}^+)\cdots$, by (c) and the isoperimetric inequality again. In particular, we have that 
\begin{equation}
\label{redYell}
Y_{k_0}=\text{int}(\Gamma^+_{k_0})\setminus\bigcup_{j\in \tilde I_{k_0}}\text{int}(\Gamma^-_j).
\end{equation} 
Thus, the set  $Y_{k_0}$ and the pair of families of curves
$\{\Gamma_{k_0}^+\}$, $\{\Gamma^-_j,\,j\in\tilde I_{k_0}\}$ 
(the first family has only one element) satisfy the assumptions of \cite[Theorem 5]{Ambetal}, to the effect that 
\begin{equation}
\label{transitTell}
\partial_M Y_{k_0}=\Gamma_{k_0}^+\cup \bigcup_{j\in\tilde I_{k_0}}\Gamma_j^-
\quad\text{ mod-}\mathcal{H}^1. 
\end{equation}
In another connection, if we define $F_{k_0}$ as before, we get from the first part of the proof and Lemma \ref{normalsubset}  that 
\begin{equation}
\label{initm}
\Gamma_{j_0}^-\subset\partial_M F_{k_0}\cap\partial_M E\qquad \text{and}\qquad
\bs{\nu}_E(x)=\bs{\nu}_{F_{k_0}}(x),\quad  \mathcal{H}^1\text{-a.e.} \  x\in\Gamma^-_{j_0}.
\end{equation}
Moreover, (h) implies that
$F_{k_0}\supset Y_{k_0}$
mod-$\mathcal{L}_2$, and \eqref{initm},
\eqref{transitTell} that  
$\Gamma^-_{j_0}\subset \partial_M F_{k_0}\cap\partial_M Y_{k_0}$ mod-$\mathcal{H}^1$,  therefore we conclude from
 Lemma \ref{normalsubset} that 
\begin{equation}
\label{transinormal}
\bs{\nu}_{F_{k_0}}(x)=\bs{\nu}_{Y_{k_0}}(x),\qquad
\mathcal{H}^1\text{-a.e. } x\in \Gamma^-_{j_0}.
\end{equation}
Besides, since $Y_{k_0}\subset \text{ext}(\Gamma^-_{j_0})$ by \eqref{redYell},
while $\Gamma^-_{j_0}\subset \partial_M Y_{k_0}\cap\partial_M  \text{ext}(\Gamma^-_{j_0})$ mod-$\mathcal{H}^1$ by \eqref{transitTell} and Lemma \ref{Gamma},
we get from  Lemma \ref{normalsubset}  again that
\begin{equation}
\label{fineqnm}
\bs{\nu}_{Y_{k_0}}(x)=
\bs{\nu}_{\text{ext}(\Gamma^-_{j_0})}(x)=
-\bs{\nu}_{\text{int}(\Gamma^-_{j_0})}(x),\qquad
\mathcal{H}^1\text{-a.e. }x\in \Gamma^-_{j_0}. 
\end{equation}
The conjunction of \eqref{initm}, \eqref{transinormal} and \eqref{fineqnm}
proves (ii) when  $j_0\notin I_\infty$.
Next, assume that $j_0\in I_\infty$; in particular $I_\infty\neq\emptyset$
so that $Y_0\neq\emptyset$, where $Y_0$ was defined 
in \eqref{defTf}.
 If we define
\begin{equation}
\label{defIt}\tilde I:=\{i\in J : \ \text{there is no } j\in J\ 
\text{such that} \ \text{int}(\Gamma_j^-)\supsetneq
\text{int}(\Gamma_i^-)\},
\end{equation}
we obviouly have that $Y_0=\bigcap_{i\in \tilde I}\text{ext}(\Gamma^-_j)$.
Note that the sets $\{\text{int}(\Gamma^-_i)\}_{i\in\tilde I}$ are disjoint,
by (b). Thus, if we let $\Upsilon^+_i:=\Gamma^-_i$, we get in view of (c)
that the set $\R^2\setminus Y_0=\bigcup_{i\in\tilde I}\text{int}(\Upsilon^+_i)$
together with the pair of families
of rectifiable Jordan curves
$\{\Upsilon^+_i,\,i\in\tilde I\}$, $\emptyset$ ({\it i.e.} 
the second family is empty),
satisfy the assumptions of    \cite[Theorem 5]{Ambetal}. The latter
implies that 
\begin{equation}
\label{bY}
\partial_M(\R^2\setminus Y_0) =\bigcup_{i\in\tilde I}\Gamma^-_i,
\end{equation}
and since  $j_0\in\tilde I$, by (d), we get from  Lemma \ref{normalsubset}   that
$\bs{\nu}_{\text{int}(\Gamma^-_{j_0})}(x)=\bs{\nu}_{\R^2\setminus Y_0}(x)=-
\bs{\nu}_{Y_0}(x)$ for $\mathcal{H}^1$-a.e. $x\in \Gamma_{j_0}^-$.
As  $Y_0\subset E$ and $\Gamma_{j_0}^-\subset\partial_M E\cap\partial_M Y_0$,
by \eqref{bY},
another application of Lemma \ref{normalsubset}  yields that
$\bs{\nu}_{Y_0}(x)=\bs{\nu}_{E}(x)$ for $\mathcal{H}^1$-a.e. 
$x\in \Gamma_{j_0}^-$, thereby establishing (ii) in this case as well.

To prove the last assertion after \eqref{defJt0}, pick $k\in K$ and observe 
from (b) and (d) that the sets $\mathrm{ext }(\Gamma^+_k)$ and
$\{\mathrm{int }(\Gamma^-_i)\}_{i\in {\tilde I}_k}$ are pairwise disjoint,
while $\R^2\setminus Y_k$ is their union. These 
sets are indecomposable, by Lemma \ref{Gamma} and \cite[Theorem~2]{Ambetal},
and since their measure-theoretical boundaries are pairwise
disjoint mod-$\mathcal{H}^1$,
because of (c), we deduce from \cite[Propostion~3]{Ambetal} that
their perimeters add up to $\mathcal{P}(\R^2\setminus Y_k)$.
Hence, they are indeed the $M$-connected components of
$\R^2\setminus Y_k$. If $\mathcal{L}_2(E)=\infty$, so that
$Y_0\neq\emptyset$, a  similar reasoning on \eqref{bY} shows that
the $\{\mathrm{int }(\Gamma_i^-)\}_{i\in\tilde I}$ are the $M$-connected components of $\R^2\setminus Y_0$, and it remains for us to prove that $\tilde I=I_\infty$.
From (d), we know that $I_\infty\subset \tilde I$. Conversely,
if $j\in J$ and $j\notin I_\infty$, we showed earlier there is a unique 
$k_0\in K$ 
such that $j\in \tilde{I}_{k_0}$. We also know that   $Y_{k_0}$ 
is a $M$-connected component of $E$, therefore it is indecomposable and 
disjoint 
mod-$\mathcal{L}_2$ from $Y_0$ which is another such component.
Consequently,  by 
\eqref{bY}, we have that
$Y_{k_0}\subset \bigcup_{i\in \tilde I}\text{int}(\Gamma^-_i)$
mod-$\mathcal{L}_2$. As the $\{\mathrm{int }(\Gamma_i^-)\}_{i\in\tilde I}$ are the $M$-connected components of $\R^2\setminus Y_0$ and  $Y_{k_0}$ is 
indecomposable, we get that $Y_{k_0}\subset \mathrm{int }(\Gamma_{i_0}^-)$
mod-$\mathcal{L}_2$ for some $i_0\in\tilde I$. 
It implies easily that 
$\mathcal{H}^1$-a.e. point of $\partial_MY_{k_0}$ is not a density point of
$\mathrm{ext }(\Gamma_{i_0}^-)$. 
{\it A fortiori} then, by
\eqref{transitTell}, $\Gamma^-_j\subset
\overline{\mathrm{int }(\Gamma_{i_0}^-)}$ mod-$\mathcal{H}^1$
where the bar indicates Euclidean closure. Since
$\Gamma^-_j$ is a closed curve we get in fact that
$\Gamma^-_j\subset
\overline{\mathrm{int }(\Gamma_{i_0}^-)}$, and by the Jordan curve theorem
it follows that $\mathrm{int }(\Gamma^-_j)\subset
\mathrm{int }(\Gamma_{i_0}^-)$, whence $j\notin \tilde I$.
The proof is now complete.
\end{proof}

Lemma \ref{Jordan_decomposition} tells us  that the measure-theoretical boundary of a set $E$ of finite perimeter consists of two countable families of 
Jordan curves, namely  
$\{ \Gamma^+_k\}_{k\in K}$ and $\{\Gamma^-_j \}_{j\in J}$, such that 
the $\mathrm{int }\,\Gamma^-_j$ and the $\mathrm{ext }\Gamma^+_k$ are the 
$M$-connected components of the complements of the $M$-connected components of $E$. This  will allow us to put a structure on these Jordan curves.
More precisely, recall from Section \ref{BVdsec} (we put $n=2$)
the set $\mathcal{S}$
 of sequences of subsets of $\R^2$ mod-$\mathcal{L}_2$
whose $\mathcal{L}_2$-measures are non increasing and tend to zero,
as well as the set $\dot{\mathcal{S}}$ of equivalence classes modulo permutations. As stressed in that section, the $M$-connected components of a set
of finite perimeter may  be regarded as a member of $\dot{\mathcal{S}}$,
a representative of which is obtained in  $\mathcal{S}$ by arranging the 
$M$-connected components in nonincreasing  measure, and appending to them infinitely many copies of the emptyset if these components are finite in number.
For $S\in\mathcal{S}$, say $S=(F_0,F_1,F_2,\cdots)$, we let for
simplicity $\mathfrak{U}_ S=\cup_j F_j$, and  we let $\mathcal{T}$ be the 
subset of  $\mathcal{S}^{\NatZer}$ consisting
of sequences $(S_0,S_1,S_2,\cdots)$ such that
$(\R^2\setminus\mathfrak{U}_ {S_0}\,,\,\R^2\setminus\mathfrak{U}_{S_1}\,,\,
\R^2\setminus\mathfrak{U}_{S_2}\,,\cdots)$
also lies in $\mathcal{S}$.
We say that two elements $(S_i)_{i\in\NatZer}$ and $(S'_i)_{i\in\NatZer}$
of $\mathcal{T}$ are equivalent if there
 is a permutation $\sigma:\NatZer\to\NatZer$  such that $S_i$ and
$S'_{\sigma(i)}$ represent the same element in $\dot{\mathcal{S}}$.
We call 
 $\dot{\mathcal{T}}$ the set of equivalence classes.

With the notation of Lemma \ref{Jordan_decomposition},
let $K$ be ordered so that the $\mathcal{L}_2(Y_k)$, $k\in K$, are nonincreasing,
and append to the sequence $Y_k$ infinitely many copies of the empty 
set if $K$ is finite.  We define a particular element 
$S=(S_0,S_1,S_2,\cdots)$ of $\mathcal{T}$ as follows. Let 
$S_0=(\emptyset,\emptyset,\cdots)$ if $\mathcal{L}_2(E)<\infty$, otherwise let
$S_0$ be a representative in $\mathcal{S}$ of the $M$-connected components of $\R^2\setminus Y_0$.
Let further $S_k$, for $k\geq1$,
be a representative in $\mathcal{S}$ of the $M$-connected components of $\R^2\setminus Y_k$. Note that $(\R^2\setminus\mathfrak{U}_{S_0},\R^2\setminus\mathfrak{U}_{S_1},
\R^2\setminus\mathfrak{U}_{S_2},\cdots)$ is equal to
$(Y_0,Y_1,\cdots)$ if $\mathcal{L}_2(E)=\infty$ and to
$(\R^2,Y_1,\cdots)$ if $\mathcal{L}_2(E)<\infty$, so it is an element of
$\mathcal{S}$.
Hence, $S:=(S_0,S_1,S_2,\cdots)$ belongs to 
$\mathcal{T}$, and  if for $k\geq0$ 
we write $S_k=(S_{k,0},S_{k,1},\cdots)$, where the $S_{k,j}$ are sets of 
finite perimeter mod-$\mathcal{L}_2$ constitutive of $S_k\in\mathcal{S}$,
then: (i) for $k\geq1$ we have $S_{k,0}=\mathrm{ext }(\Gamma_k^+)$ while
$(S_{k,j})_{j\geq1}$ enumerates the $(\mathrm{int}(\Gamma_j^-))_{j\in \tilde I_k}$ in nonincreasing $\mathcal{L}_2$-measure,
with infinitely many copies of the empty set appended when $\tilde I_k$
is finite;
(ii) if $\mathcal{L}_2(E)=\infty$ then  $(S_{0,j})_{j\in\NatZer}$ enumerates the $(\mathrm{int}(\Gamma_j^-))_{j\in  I_\infty}$ in nonincreasing $\mathcal{L}_2$-measure, 
with infinitely many copies of the empty set appended when $ I_\infty$
is finite, and if $\mathcal{L}_2(E)<\infty$
then $S_{0,j}=\emptyset$ for all $j$.
Altogether, the families 
$\{(\mathrm{ext }(\Gamma_k^+))_{k\in K}\}$,
$\{(\mathrm{int }(\Gamma_j^-))_{j\in J}\}$, padded with copies of the empty set
if needed and  arranged  in the previously described structure as entries
of the infinite array $(S_{k,j})$, $0\leq k,j\leq\infty$,
define some $S\in\mathcal{T}$. Of course, $S$ depends on the ordering
we chose to enumerate the $Y_k$ and the $M$-connected components
of the $\R^2\setminus Y_k$, if there are several orderings
making their $\mathcal{L}_2$-measures nonincreasing. However, the 
equivalence class $\dot{S}\in\dot{\mathcal{T}}$ is independent of 
such choices.

We orient the $\Gamma_k^+$ counterclockwise 
and the $\Gamma_j^-$ clockwise. This allows us to regard
$\Gamma_k^+$ (resp. $\Gamma_j^-$) as the image of a
unique  parametrized Jordan curve
$\bs{\gamma}_k^+$ (resp. $\bs{\gamma}_j^-$).
We shall
identify $\mathrm{ext }(\Gamma_k^+)$ (resp. 
$\mathrm{int } (\Gamma_j^-)$) with $\bs{\gamma}_k^+$ (resp. $\bs{\gamma}_j^-$),
and we regard the emptyset  as a degenerate curve
reducing to a point.
This way, the sets $S_{k,j}$ defined above can be viewed as
parametrized rectifiable Jordan curves, and the latter can in turn be considered
as measures  if we regard a parametrized Jordan curve $\bs{\gamma}$ as the 
member $\bb{R}_{\bs{\gamma}}$ of $\mathcal{M}(\R^2)^2$ defined 
in \eqref{Rgamma}. Here, a degenerate curve has
constant parametrization and therefore corresponds  to the zero measure.   Recall also from Section \ref{sec:decss} that if $\bs{\gamma}$ is
 a parametrized rectifiable Jordan curve of length $L>0$ and
 $\tilde{\bs{\gamma}}:\R\to\R^2$ is the periodic extension of $\bs{\gamma}$,
then $\tilde{\bs{\gamma}}$ defines {\it via} \eqref{SmirnovA} the  elementary solenoid  ${\mathbf T}_{\tilde{\bs{\gamma}}}=\bb{R}_{\bs{\gamma}}/L$, and in the degenerate case where $\bs{\gamma}$ reduces to a point, we define ${\mathbf T}_{\tilde{ \bs{\gamma}}}=0$.

\begin{prop}
\label{convmcur}
Let $\phi\in\dot{BV}(\R^2)$ and $E_t$ be as in \eqref{defsl}. For $t$ such that
$E_t$ has finite perimeter, let $S^t:=(S_0^t,S_1^t,S_2^t,\cdots)\in\mathcal{T}$
be constructed as indicated above from the curves 
$\{(\Gamma_k^+)_{k\in K}\}$,
$\{(\Gamma_j^-)_{j\in J}\}$ obtained by applying Lemma
\ref{Jordan_decomposition} to $E_t$. Write  $S_k^t=(S_{k,0}^t,S_{k,1}^t,\cdots)$ for the components of $S_k^t\in\mathcal{S}$. As we just explained, 
each $S^t_{k,j}$ identifies with a parametrized Jordan curve $\bs{\gamma}^t_{k,j}$ with image $\Gamma_{k,j}^t$. 
To each $\eta>0$, there is a $\sigma$-compact set $\Sigma_\eta\subset\R$, with
$\mathcal{L}_1(\R\setminus\Sigma_\eta)<\eta$, such that:
\begin{enumerate}
\item[(i)] For each $t\in\Sigma_\eta$, it holds that $E_t$ has finite perimeter.
\item[(ii)] For each sequence $(t_m)_{m\geq1}$ in $\Sigma_\eta$ converging to 
$t_0\in\Sigma_\eta$, there is a subsequence $t_{m_\ell}$ such that
$\bb{R}_{\bs{\gamma}_{k,j}^{t_{m_\ell}}}$ converges weak-$*$, as $\ell\to\infty$ for fixed  $k,j$, to $\bb{R}_{\bs{\gamma}_{k,j}}$ for some
parametrized 
Jordan curve $\bs{\gamma}_{k,j}$ with image $\Gamma_{k,j}$.
Moreover, $(\bs{\gamma}_{k,j})_{k,j\in\NatZer}$ is equivalent to 
$S^{t_0}$ 
in $\dot{\mathcal{T}}$.
\item[(iii)] We have the limiting relation 
 $\lim_\ell\mathcal{H}^1(\Gamma^{t_{m_\ell}}_{k,j})=\mathcal{H}^1(\Gamma_{k,j})$ 
for each $(k,j)$.
\item[(iv)] It holds that ${\mathbf T}_{\tilde{\bs{\gamma}}_{k,j}^{t_{m_\ell}}}$ converges weak-$*$, as $\ell\to\infty$ for fixed  $k,j$, to
${\mathbf T}_{\tilde{\bs{\gamma}}_{k,j}}$.
\end{enumerate}
\end{prop}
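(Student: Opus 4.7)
The proof will proceed in three stages: first a direct application of Proposition~\ref{convmcc} to $\phi$ to control the outer layer $Y_k^t$ of the array $S^t$; then an adapted version of the same argument applied to the complements $\R^2\setminus Y_k^t$ to control the inner layer; and finally a translation of set-theoretic convergence into convergence of the vector-valued measures $\bb{R}_{\bs{\gamma}}$ and $\mathbf{T}_{\tilde{\bs{\gamma}}}$ using Lemma~\ref{Jordan_decomposition} and Lemma~\ref{Gamma}.

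Applying Proposition~\ref{convmcc} directly to $\phi$ with parameter $\eta$ produces $\Sigma_\eta$ satisfying (i) and giving, for any $t_m\to t_0$ in $\Sigma_\eta$, a subsequence along which each $M$-connected component $Y_k^{t_m}$ of $E_{t_m}$ converges locally in measure and in perimeter to a set $F_k$, with $(F_k)_{k\in\NatZer}$ equivalent to $(Y_k^{t_0})_{k\in\NatZer}$ in $\dot{\mathcal{S}}$. The same convergence transfers to the complements $\R^2\setminus Y_k^{t_m}\to\R^2\setminus F_k$ with $\mathcal{P}$-convergence, since $\mathcal{P}(\R^2\setminus Y_k^{t_m})=\mathcal{P}(Y_k^{t_m})$. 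The extremal characterization \eqref{maxcompc1} then lets one rerun the core of the proof of Proposition~\ref{convmcc} on each such complement-sequence: its essential inputs are local convergence in measure plus perimeter convergence, and the $t$-monotonicity used in the original proof is replaced here by a direct appeal to the uniqueness clause of \eqref{maxcompc1}, once one verifies by lower semicontinuity of perimeter that the candidate limits partition $\R^2\setminus F_k$ with total perimeter bounded by $\mathcal{P}(\R^2\setminus F_k)$. A diagonal extraction over $k$ and $j$ then produces a single subsequence $t_{m_\ell}$ along which each entry $S_{k,j}^{t_{m_\ell}}$ converges locally in measure and in perimeter to some $H_{k,j}$; Lemma~\ref{Jordan_decomposition} applied to $E_{t_0}$ then lets us identify $(H_{k,j})_{k,j\in\NatZer}$ with a representative of $S^{t_0}$ in $\dot{\mathcal{T}}$. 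The chief obstacle is this diagonal extraction, since the non-increasing orderings in $\mathcal{S}$ that define the array $S^t$ are only specified up to permutation within ties, so matching indexings across $t_{m_\ell}$ and $t_0$ requires careful bookkeeping.

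With set-theoretic convergence in hand, each nondegenerate $S_{k,j}^{t_{m_\ell}}$ (respectively $H_{k,j}$) is the exterior or interior of a rectifiable Jordan curve by Lemma~\ref{Jordan_decomposition} and Lemma~\ref{Gamma}. Local convergence in measure of the characteristic functions paired with convergence of total variations (equal to perimeters) yields strict convergence — in particular weak-$*$ convergence — of $\grad\chi_{S_{k,j}^{t_{m_\ell}}}$ to $\grad\chi_{H_{k,j}}$. Rotating by $\mp\mathfrak{R}$ according to whether we face an outer counterclockwise or inner clockwise boundary (under the conventions fixed before the proposition) delivers (ii), while (iii) is immediate from $\|\bb{R}_{\bs{\gamma}_{k,j}}\|_{TV}=\mathcal{H}^1(\Gamma_{k,j})=\mathcal{P}(\text{int}(\Gamma_{k,j}))$. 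Part (iv) then follows by dividing the convergence in (ii) by the lengths supplied by (iii); the only subtle case is when the limit curve is degenerate, for then $\mathcal{H}^1(\Gamma^{t_{m_\ell}}_{k,j})\to0$ forces the Jordan curves to shrink to a single point by the isoperimetric inequality, and a first-order Taylor expansion of any test function together with the identity $\int_\Gamma\bs{\tau}\,d\mathcal{H}^1=0$ valid for closed curves shows that $\mathbf{T}_{\tilde{\bs{\gamma}}^{t_{m_\ell}}_{k,j}}$ then converges weak-$*$ to $0=\mathbf{T}_{\tilde{\bs{\gamma}}_{k,j}}$, as required.
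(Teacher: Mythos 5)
Your overall architecture matches the paper's proof closely: take $\Sigma_\eta$ from Proposition~\ref{convmcc}, pass to the complements $\R^2\setminus Y_k^{t_m}$, rerun the component-convergence argument there, diagonalize, and then convert local-in-measure plus perimeter convergence of the sets into weak-$*$ convergence of $\grad\chi$ (hence of $\bb{R}_{\bs{\gamma}}$ after rotation by $\pi/2$), with $(iii)$ read off from perimeter convergence and $(iv)$ handled by the annihilation-of-constants argument in the degenerate case. All of that is sound and is essentially what the paper does.

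The gap is in your treatment of the second stage. You assert that the $t$-monotonicity used in the proof of Proposition~\ref{convmcc} can be ``replaced by a direct appeal to the uniqueness clause of \eqref{maxcompc1}, once one verifies \dots that the candidate limits partition $\R^2\setminus F_k$ with total perimeter bounded by $\mathcal{P}(\R^2\setminus F_k)$.'' That verification only shows the limit family $(H_{k,j})_j$ is an \emph{admissible competitor} in \eqref{maxcompc1}; the maximality of the true $M$-connected components then gives $\sum_jG(H_{k,j})\leq\sum_jG(C_{k,j})$, and uniqueness of the maximizer tells you nothing unless you also prove the reverse inequality. A competitor can be strictly suboptimal: think of two disks joined by a neck whose length and width both shrink along the sequence — each approximant is indecomposable, its limit is the union of two tangent disks, which is a legitimate one-piece partition of the limit set with the right perimeter, yet it is \emph{not} the $M$-decomposition, and $G(D_1\cup D_2)<G(D_1)+G(D_2)$ since $\alpha>1$. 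So components can merge in the limit, and the uniqueness clause cannot rule this out. The reverse inequality is exactly the ``claim'' in the proof of Proposition~\ref{convmcc}, and its proof genuinely needs the nested inclusions $E_{t_m}\subset E_{t_0}$ or $E_{t_m}\supset E_{t_0}$. The fix is to observe that the complements inherit this nesting: if $t_m>t_0$ then $Y_k^{t_m}$, being indecomposable and contained in $E_{t_0}$, lies (for large $m$) inside the matched component $F_k=Y_{k'}^{t_0}$, so $\R^2\setminus Y_k^{t_m}\supset\R^2\setminus F_k$, and symmetrically for $t_m<t_0$; with these inclusions the two-case argument of Proposition~\ref{convmcc} runs verbatim on the complements, which is what the paper means by ``apply the proof of Proposition~\ref{convmcc} to $\R^2\setminus Y_k^{t_{m_i}}$.''
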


\begin{proof} We adopt the notation of Lemma \ref{Jordan_decomposition} 
for the decomposition of $E^t$, only with an extra-superscript $t$ to keep track of the level; {\it e.g.}, as in $Y_k^t$. By Lemma \ref{poin} 
we may assume that $\phi\in L^2(\R^2)$, so that
$\mathcal{L}_2(E_t)=\infty$ when $t<0$ and 
$\mathcal{L}_2(E_t)<\infty$ when $t>0$. To avoid bookkeeping 
with indices, we give the proof when $t_0<0$  only,
as the case where $t_0>0$ is similar but simpler. 
Thus, we may assume that 
$t_m<0$ for all $m$.
With $\Sigma_\eta$ as in Proposition~\ref{convmcc},
 we know from the latter that $(i)$ holds and that, for some subsequence 
$t_{m_i}$, 
the $Y_k^{t_{m_i}}$ converge,  locally in measure for fixed $k$ 
as $i\to\infty$, to some $F_k$ such that $(F_k)_{k\geq0}$ is equivalent to
$(Y_k^{t_0})_{k\geq0}$ in $\dot{\mathcal{S}}$. Moreover,
we know from $(iii)$ of this proposition that 
$\lim_i\mathcal{L}_n((Y_k^{t_{m_i}}\setminus  F_k)\cup(F_k\setminus Y^{t_{m_i}}_k))=0$
and that $\lim_i\mathcal{P}(Y^{t_{m_i}}_k)=\mathcal{P}(F_k)$ for each $k$.
Equivalently, the $\R^2\setminus Y_k^{t_{m_i}}$ 
converge  locally in measure to $\R^2\setminus F_k$
as $i\to\infty$ and
$\lim_i\mathcal{L}_n((\R^2\setminus Y_k^{t_{m_i}})\setminus  (\R^2\setminus F_k)\cup((\R^2\setminus F_k)\setminus (\R^2\setminus Y^{t_{m_i}}_k)))=0$,
while  $\lim_i\mathcal{P}(\R^2\setminus Y^{t_{m_i}}_k)=\mathcal{P}(\R^2\setminus F_k)$ for each $k$. This is all we need to apply the proof 
of Proposition \ref{convmcc} to $\R^2\setminus Y_k^{t_{m_i}}$ instead of
$E^{t_{m_i}}$,
to the effect that for each $k\geq0$ there is a subsequence $t^{(k)}_{m_{i_\ell}}$
of $t_{m_i}$ such that $S_{k,j}^{t^{(k)}_{m_{i_\ell}}}$ converges 
locally in measure to some $C_{k,j}$, where $(C_{k,j})_{j\in\NatZer}$
is equivalent to $S_k^{t_0}$ in $\dot{\mathcal{S}}$. Using a diagonal argument, we can  make $t^{(k)}_{m_{i_\ell}}$
independent of $k$ and we rename it as
$t_{m_\ell}$ for simplicity. 
By construction, we may write for $k=0$ or $j\geq1$ that
$C_{k,j}=\mathrm{int }(\Gamma_{k,j})$ 
mod-$\mathcal{L}_2$ with $\Gamma_{k,j}=\Gamma_l^{-,t_0}$ for some $l=l(k,i)$,
while for $k\geq1$ we have $C_{k,0}=\mathrm{ext }(\Gamma_{k,0})$ 
mod-$\mathcal{L}_2$ with $\Gamma_{k,0}=\Gamma_k^{+,t_0}$.
Moreover, we know from the proof of 
Proposition \ref{convmcc} point $(iii)$
that $\lim_\ell\mathcal{P}(S_{k,j}^{t_{m_\ell}})=\mathcal{P}(C_{k,j})$
or, equivalently, that
$\lim_\ell\mathcal{H}^1(\Gamma^{t_{m_\ell}}_{k,j})=\mathcal{H}^1(\Gamma_{k,j})$, which proves $(iii)$. Now, if we let $\bs{\gamma}_{k,j}$ be a parametrization of $\Gamma_{k,j}$ and $\bs{\gamma}^{t_{m_\ell}}_{k,j}$ be a parametrization
of $\Gamma_{k,j}^{t_{m_\ell}}$, 
oriented  clockwise 
for $j\geq1$ or $k=0$ and counterclockwise 
when $j=0$ and $k\geq1$,
it follows from \eqref{dermeasd} and a
mollification argument,
since $\mathcal{H}^1(\Gamma_{k,j}^{t_{m_\ell}})$ is bounded
for fixed $k,j$ as $\ell\to\infty$,
that $\bs{\gamma}^{t_{m_\ell}}_{k,j}$ converges weak-$*$ to
$\bs{\gamma}_{k,j}$. 
 Applying pointwise a rotation 
by $\pi/2$, this is tantamount
to say that $\bb{R}_{\bs{\gamma}^{t_{m_\ell}}_{k,j}}$ converges
weak-$*$ to $\bb{R}_{\bs{\gamma}_{k,j}}$, thereby proving $(ii)$.
 Note that when $\mathcal{H}^1(\Gamma_{k,j})>0$, then   the assertion of item $(iv)$ follows immediately from items $(ii)$ and $(iii)$.   
 Now suppose $\mathcal{H}^1(\Gamma_{k,j})=0$.   Let $\mathbf{f}\in C_c(\R^2)^2$ and   $\epsilon>0$. By uniform continuity, there is some $\delta>0$ such that $|\mathbf{f}(x)-\mathbf{f}(y)|<\epsilon$ whenever $|x-y|<\delta$.  Let $L_\epsilon$ be such that ${\rm diam } (\Gamma^{t_{m_\ell}}_{k,j})<\delta$ for $\ell\ge L_\epsilon$.    Since $\bb{R}_{\bs{\gamma}^{t_{m_\ell}}_{k,j}}$ 
 is divergence free for all $j,k,\ell$, it annihilates constant functions.    Thus, for $x_\ell\in \Gamma^{t_{m_\ell}}_{k,j}$, we have
 $$
| \langle \mathbf{f},\bb{R}_{\bs{\gamma}^{t_{m_\ell}}_{k,j}}\rangle | =  | \langle \mathbf{f}-\mathbf{f}(x_\ell),\bb{R}_{\bs{\gamma}^{t_{m_\ell}}_{k,j}}\rangle | \le \epsilon \mathcal{H}^1(\Gamma^{t_{m_\ell}}_{k,j}),
 $$
 which verifies $(iv)$ in this case.
 \end{proof}


In the discussion before Proposition \ref{convmcur},
we identified the curves $\{ \Gamma^+_k\}_{k\in K}$ and $\{\Gamma^-_j \}_{j\in J}$
forming the measure-theoretical boundary of a set of finite perimeter
with (the equivalence classes of) an element of $\mathcal{\mathcal{T}}$ 
of the form $S=(S_{k,j})_{k,j\in\NatZer}$ where  $S_{k,j}$ is (the interior
of) a (possibly degenerate) Jordan curve oriented  clockwise 
for $j\geq1$ or $k=0$, while
$S_{k,0}$ is (the exterior of) a Jordan curve oriented  counterclockwise 
when $k\geq1$.
We let $\mathcal{C}\subset\mathcal{T}$ denote 
the set of such elements, and $\dot{\mathcal{C}}$ the set of equivalence classes.
Recalling that $\mathcal{M}(\R^2)^2$ equipped with the weak-$*$ topology 
is a metric space, say with distance $d_w$, 
we endow $\mathcal{C}$ with the distance $d_{\mathcal{C}}((S_{k,j}),(S'_{k,j})):=\sup_{k,j}
d_w(S_{k,j},S'_{k,j})$ and $\dot{\mathcal{C}}$ with the quotient topology.
We also find it more convenient to enumerate with a single index 
the curves $S_{k,j}$ 
constitutive of $S\in\mathcal{C}$:
for this, we choose a bijection $\sigma:\NatZer^2\to\NatZer$
and we write $\Gamma_{\sigma(k,j)}:=S_{i,j}$. The orientation of
the corresponding parametrized curve $\bs{\gamma}_{\sigma(i,j)}$ will
depend on the choice of $\sigma$, and so do  the permutations defining 
equivalence classes in $\dot{\mathcal{C}}$,
but our results  will not.
We can now state the representation theorem for divergence-free measures in the plane:

\begin{theorem} 
\label{rep}
Let $\bs{\nu}\in \mathcal{M}(S)^2$  be divergence-free in $\R^2$.
Then, there exists $G\subset\R$ with $\mathcal{L}_1(\R\setminus G)=0$
such that, for $t\in G$,
there is a countable collection  of (possibly degenerate) parametrized rectifiable
Jordan curves $\{\bs\gamma_n^t\}_{n\in\NatZer}$ with
images $\Gamma_n^t$ such that:
\begin{enumerate}
	\item[(i)] 
the $(\Gamma_n^t)_{n\in\mathbb{N}}$ are
disjoint up to a set of $\mathcal{H}^1$-measure zero and
$\Gamma_n^t\subset \text{\rm supp}\,\bs{\nu}$ for each $n$;
\item[(ii)] the union
 $\bigcup_n \Gamma_n^t$ is, up to a set of $\mathcal{H}^1$-measure zero,
the measure-theoretical boundary $\partial_M\Omega(t)$ of
a set $\Omega(t)\subset\R^2$ of finite perimeter; 
\item[(iii)] $\Omega(t_1)\supset\Omega(t_2)$ if $t_1<t_2$,
and the mapping $t\mapsto (\bs{\gamma}_n^t)_{n\in\NatZer}$ from $\R$ to $\dot{\mathcal{C}}$ is approximately continuous for a.e. $t$;
\item[(iv)]
For any Borel set $B\subset\R^2$, $\bb{g}\in L^1[d|\bs{\nu}|]^2$ and $h\in L^1[d|\bs{\nu}|]$, it holds that
\begin{equation}\label{eqrep}
\bs{\nu}(B) = \int_\R\sum_{n\in \NatZer}\left(\int_{B} \boldsymbol{\tau}_n^t \, d\left(\mathcal{H}^1\mathcal{b} \Gamma_n^t\right)\right) \, dt,
\end{equation}
where $\bs{\tau}_n^t=(\boldsymbol{\gamma}_n^t)'/|(\bs{\gamma}_n^t)'|$ 
is the unit tangent vector field to $\Gamma_n^t$ oriented by
$\bs{\gamma}_n^t$,
\begin{equation}\label{eqrepabs}
|\bs{\nu}|(B)=\int_\R \mathcal{H}^1(\partial_M\Omega(t)\cap B)dt=\int_\R \left(\sum_{n\in \NatZer}\mathcal{H}^1(\Gamma_n^t\cap B)\right)dt,
\end{equation}
\begin{equation}\label{eqrepint}
\int\bb{g}\cdot d\bs{\nu} = \int_\R\sum_{n\in \NatZer}\left(\int \bb{g}\cdot \boldsymbol{\tau}_n^t \, d\left(\mathcal{H}^1\mathcal{b} \Gamma_n^t\right)\right) \, dt,
\end{equation}
and 
\begin{equation}\label{eqrepint2}
\int h d|\bs{\nu}| = \int_\R\sum_{n\in \NatZer}\left(\int h \, d\left(\mathcal{H}^1\mathcal{b} \Gamma_n^t\right)\right) \, dt,
\end{equation}
where  the inner integrals on   the right handsides of \eqref{eqrepint} and \eqref{eqrepint2} are well defined for a.e. $t\in \R$.
\item[(v)] 
The set $J:=\bigcup_{\stackrel{t_1\neq t_2\in G}{n_1,n_2\in\mathbb{N}}}\Gamma_{n_1}^{t_1}\cap\Gamma_{n_2}^{t_2}$
is 1-rectifiable in $\R^2$ and $\bs{\nu}\mathcal{b}_J$ is absolutely continuous with respect to $\mathcal{H}^1$; 
for a.e. $t\in G$, $\bb{u}_{\bs{\nu}}(x)=\bs{\tau}_n^t(x)$ for $\mathcal{H}^1$-a.e.   $x\in J\cap\partial_M\Omega(t)$.
More generally, it holds for a.e. $t\in G$ and every $n\in\NatZer$ that $\bb{u}_{\bs{\nu}}(x)=\bs{\tau}_n^t(x)$ for $\mathcal{H}^1$-a.e. $x\in\Gamma^t_n$.
\end{enumerate}
\end{theorem}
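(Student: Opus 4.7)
The plan is to leverage Lemma~\ref{2Dsole} to reduce $\bs{\nu}$ to the gradient of a homogeneous $BV$-function, apply the co-area formula from Lemma~\ref{coarea} to decompose this gradient over suplevel sets, and then split each boundary into Jordan curves via Lemma~\ref{Jordan_decomposition}. Concretely, I write $\bs{\nu} = \mathfrak{R}\grad\phi$ for some $\phi \in \dot{BV}(\R^2)$ and set $\Omega(t) := E_t$, so $\Omega(t_1) \supset \Omega(t_2)$ whenever $t_1 < t_2$. Lemma~\ref{coarea} supplies a set $G \subset \R$ with $\mathcal{L}_1(\R \setminus G) = 0$ such that for $t \in G$, $E_t$ has finite perimeter with $\grad\chi_{E_t} = \bs{\nu}_{E_t}\,(\mathcal{H}^1\mathcal{b}\partial_M E_t)$, and both $\grad\phi(B) = \int_\R \grad\chi_{E_t}(B)\,dt$ and $|\grad\phi|(B) = \int_\R \mathcal{H}^1(\partial_M E_t \cap B)\,dt$ hold for Borel $B$. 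For each $t \in G$, Lemma~\ref{Jordan_decomposition} splits $\partial_M E_t$ mod-$\mathcal{H}^1$ into pairwise $\mathcal{H}^1$-disjoint rectifiable Jordan curves $\{\Gamma_k^{+,t}\}$ and $\{\Gamma_j^{-,t}\}$, which I reindex as $(\Gamma_n^t)_{n \in \NatZer}$ with unit-speed parametrizations $\bs{\gamma}_n^t$ oriented counterclockwise on the outer loops and clockwise on the inner loops; this establishes (ii).

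The core identity converting normals to tangents is $\mathfrak{R}\bs{\nu}_{\text{int}(\Gamma)} = \bs{\tau}$ at $\mathcal{H}^1$-a.e.\ point of a rectifiable Jordan curve $\Gamma$ oriented counterclockwise, with $\bs{\tau}$ its oriented unit tangent (cf.\ the remark following \eqref{dermeasd}). Combined with the normal identifications obtained in the proof of Lemma~\ref{Jordan_decomposition}---namely $\bs{\nu}_{E_t} = \bs{\nu}_{\text{int}(\Gamma_k^{+,t})}$ on $\Gamma_k^{+,t}$ and $\bs{\nu}_{E_t} = -\bs{\nu}_{\text{int}(\Gamma_j^{-,t})}$ on $\Gamma_j^{-,t}$---and observing that the clockwise orientation on $\Gamma_j^{-,t}$ absorbs the sign, this yields $\mathfrak{R}\bs{\nu}_{E_t}(x) = \bs{\tau}_n^t(x)$ for $\mathcal{H}^1$-a.e.\ $x \in \Gamma_n^t$. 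Applying $\mathfrak{R}$ to Lemma~\ref{coarea}(c), (d), (a), (b) in turn and using the $\mathcal{H}^1$-disjointness of the $\Gamma_n^t$ produces \eqref{eqrep}, \eqref{eqrepint}, \eqref{eqrepabs} and \eqref{eqrepint2}. For the support clause in (i), $|\bs{\nu}|(\R^2 \setminus \text{supp}\,\bs{\nu}) = 0$ and \eqref{eqrepabs} force $\mathcal{H}^1(\partial_M E_t \setminus \text{supp}\,\bs{\nu}) = 0$ for a.e.\ $t$; since $\text{supp}\,\bs{\nu}$ is closed and each $\Gamma_n^t$ compact, one may reparametrize $\bs{\gamma}_n^t$ onto $\Gamma_n^t \cap \text{supp}\,\bs{\nu}$ (possibly degenerate) without altering any formula.

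The approximate continuity in (iii) is a direct consequence of Proposition~\ref{convmcur}: on each $\Sigma_\eta$, sequential weak-$*$ convergence of $\bb{R}_{\bs{\gamma}_n^{t_m}}$ together with length convergence descends through the quotient projection to continuity of $t \mapsto (\bs{\gamma}_n^t)$ in $\dot{\mathcal{C}}$, and a Borel--Cantelli argument applied to $\Sigma_{1/k^2}$, identical in spirit to the one used in Theorem~\ref{approxceq}, produces approximate continuity at $\mathcal{L}_1$-a.e.\ $t$.

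For (v), if $x \in \Gamma_{n_1}^{t_1} \cap \Gamma_{n_2}^{t_2}$ with $t_1 < t_2$, then $x \in \partial_M E_{t_1} \cap \partial_M E_{t_2}$ and \eqref{defthb}--\eqref{jump} yield $\phi^{\sup}(x) \geq t_2 > t_1 \geq \phi^{\inf}(x)$, so $J \subset J(\phi)$; Lemma~\ref{jump_lemma} then gives the $1$-rectifiability of $J$, the absolute continuity of $\bs{\nu}\mathcal{b}J$ with respect to $\mathcal{H}^1$ (after rotation by $\mathfrak{R}$), and at $\mathcal{H}^1$-a.e.\ $x \in \partial_M E_t \cap J(\phi)$ for a.e.\ $t$ the identity $\bb{u}_{\bs{\nu}}(x) = \mathfrak{R}\bs{\nu}_{E_t}(x) = \bs{\tau}_n^t(x)$, after normalizing by $(\phi^{\sup} - \phi^{\inf}) > 0$. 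I expect the stronger ``more generally'' assertion to be the main obstacle: it requires $\bb{u}_{\bs{\nu}} = \bs{\tau}_n^t$ at $\mathcal{H}^1$-a.e.\ $x \in \Gamma_n^t$, not merely on $J \cap \Gamma_n^t$, for a.e.\ $t$. Its proof should reduce, via \eqref{eqrepabs} and Lemma~\ref{jump_lemma}, to showing that $\mathcal{H}^1$-almost every point of $\Gamma_n^t$ lies in $J(\phi)$ for a.e.\ $t \in G$; indeed, a point of $\partial_M E_t$ off $J(\phi)$ would have $\phi^{\sup} = \phi^{\inf} = t$ and could therefore belong to $\partial_M E_s$ only for an $\mathcal{L}_1$-null set of $s$, so a Fubini argument against the co-area integral should force the exceptional set to be $\mathcal{L}_1$-negligible in $t$.
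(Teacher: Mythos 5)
Your treatment of items (i)--(iv), of the approximate continuity in (iii), and of the first two assertions of (v) follows the paper's own route essentially verbatim: reduce to $\bs{\nu}=\mathfrak{R}\grad\phi$ via Lemma~\ref{2Dsole}, disintegrate $\grad\phi$ over suplevel sets by Lemma~\ref{coarea}, split each $\partial_ME_t$ into Jordan curves by Lemma~\ref{Jordan_decomposition}, get approximate continuity from Proposition~\ref{convmcur} by the Borel--Cantelli argument of Theorem~\ref{approxceq}, and deduce $J\subset J(\phi)$ from $\phi^{\inf}(x)\le t_1<t_2\le\phi^{\sup}(x)$ together with Lemma~\ref{jump_lemma}. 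All of that is sound (your ``reparametrize onto $\Gamma_n^t\cap\mathrm{supp}\,\bs{\nu}$'' step is unnecessary: a relatively closed subset of a rectifiable Jordan curve whose complement has $\mathcal{H}^1$-measure zero is dense, hence the whole curve, exactly as in Lemma~\ref{Gamma}).

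The genuine gap is in your plan for the final ``more generally'' assertion of (v). You propose to reduce it to showing that $\mathcal{H}^1$-a.e.\ point of $\Gamma_n^t$ lies in $J(\phi)$ for a.e.\ $t$, and to get this from a Fubini argument. That reduction is false: for a smooth compactly supported $\phi$ one has $J(\phi)=\emptyset$ while $\mathcal{H}^1(\partial_ME_t)>0$ for a.e.\ $t$ in the range, and indeed the co-area formula gives $\int_\R\mathcal{H}^1(\partial_ME_t\setminus J(\phi))\,dt=|\grad\phi|(\R^2\setminus J(\phi))$, which is the total mass of the absolutely continuous and Cantor parts of $\grad\phi$ and is nonzero in general. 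Lemma~\ref{jump_lemma} identifies $\bb{u}_{\bs{\nu}}$ with $\bs{\tau}_n^t$ only on the jump set, so no argument confined to $J(\phi)$ can reach all of $\Gamma_n^t$. The paper closes this instead by a saturation argument: evaluating $\|\bs{\nu}\|_{TV}$ once via \eqref{eqrepabs} and once by applying \eqref{eqrepint} to $\bb{g}=\bb{u}_{\bs{\nu}}$ yields
\begin{equation*}
\int_\R\sum_{n\in\NatZer}\mathcal{H}^1(\Gamma_n^t)\,dt=\int\bb{u}_{\bs{\nu}}\cdot d\bs{\nu}=\int_\R\sum_{n\in\NatZer}\Bigl(\int\bb{u}_{\bs{\nu}}\cdot\bs{\tau}_n^t\,d\bigl(\mathcal{H}^1\mathcal{b}\Gamma_n^t\bigr)\Bigr)dt,
\end{equation*}
and since $\bb{u}_{\bs{\nu}}\cdot\bs{\tau}_n^t\le 1$ pointwise with equality only when the two unit vectors coincide, equality of the outer integrals forces $\bb{u}_{\bs{\nu}}=\bs{\tau}_n^t$ $\mathcal{H}^1$-a.e.\ on $\Gamma_n^t$ for a.e.\ $t$ and every $n$. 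You should replace your Fubini sketch with this argument (or an equivalent one); as written, that step would fail.
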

\begin{proof}
By Lemma~\ref{2Dsole}, we have $\bs{\nu}(B)=\mathfrak{R}\grad \phi(B)$ for some $\phi\in \dot{BV}(\R^2)$.
 Defining  $E_t$ as in \eqref{defsl},
we get from Lemma \ref{coarea} that it has finite perimeter for a.e. $t$.
We let $G$ be the set of such $t$,
and for $t\in G$ we let $\{\bs{\gamma}_n^t\}_{n\in \NatZer}$
 be a representative in $\mathcal{C}$ of the element of $\dot{\mathcal{C}}$
corresponding to the family of curves $(\bs{\gamma}_{k,j}^t)\in\mathcal{T}$ 
appearing in Proposition
\ref{convmcur}, see discussion after the proof of that proposition.
If we set  $\Omega(t)=E_t$,
then $(ii)$ and the first assertion in $(i)$ come from Lemma
\ref{Jordan_decomposition}, the first assertion in $(iii)$ is obvious  
and the second on approximate continuity follows from Proposition
\ref{convmcur} much like Theorem  \ref{approxceq} did from 
Proposition \ref{convmcc}. Recalling definition
\eqref{Rgammappv}, we see that Lemma \ref{coarea} and the 
 remark after \eqref{dermeasd}
together  imply 
$(iv)$, where it should be noted that equations \eqref{eqrep} through \eqref{eqrepint2}
only depend on the equivalence class of
$\{\bs{\gamma}_n^t\}_{n\in \NatZer}$ in $\dot{\mathcal{C}}$.
Since  \eqref{eqrepabs}
implies that $\mathcal{H}^1(\Gamma_n(t)\setminus\supp\bs{\nu})=0$ for a.e. 
$t\in\R$ the second half of $(i)$ holds.

	Observing that
$\bigcup_{n\in\mathbb{N}}\Gamma_n^t=\partial_M E_t$ mod-$\mathcal{H}^1$, we see for each $t\in G$ that 
every $x\in J$ lies in $\partial_M (\R^2\setminus E_{t_1})\cap\partial_M E_{t_2}$ for some $t_1<t_2$. 
Remembering the definitions in \eqref{jump}, this implies that, for every $x\in J$, $\phi^{\inf}(x)\leq t_1 < t_2 \leq \phi^{\sup}(x)$.
Hence, by Lemma \ref{jump_lemma}, $J\subset J(\phi)$ and the first two assertions of $(v)$ follow.
Now, evaluating $\|\bs{\nu}\|$ with \eqref{eqrepabs} and integrating \eqref{eqrepint} against $\bb{u}_{\bs{\nu}}$ we get,
$$
\int_\R \left(\sum_{n\in \NatZer}\mathcal{H}^1(\Gamma_n^t)\right)dt
= \|\bs{\nu}\| 
= \int\bb{u}_{\bs{\nu}}\cdot d\bs{\nu} = \int_\R\sum_{n\in \NatZer}\left(\int \bb{u}_{\bs{\nu}}\cdot \boldsymbol{\tau}_n^t \, d\left(\mathcal{H}^1\mathcal{b} \Gamma_n^t\right)\right) \, dt,
$$
and noting that $\bb{u}_{\bs{\nu}}\cdot\bs{\tau}_n^t\leq1$, with equality only when $\bb{u}_{\bs{\nu}}=\bs{\tau}_n^t$, gives us the last assertion of $(v)$.
\end{proof}

Decomposition \eqref{eqrep}-\eqref{eqrepabs} is a 
	special case of \eqref{measonb}, as we now show.
	\begin{prop}
\label{mesi}
	Let $\bs{\nu}\in \mathcal{M}(S)^2$  be divergence-free in $\R^2$, with
$G$, $\{\bs\gamma_n^t\}_{n\in\NatZer}$ and $\Gamma_n^t$ as in Theorem \ref{rep}.
	Take $\tilde{\bs{\gamma}}_n^t$ to be the periodic extension to $\R$ of 
	$\bs{\gamma}_n^t$. 
	If we set
	\begin{equation}\label{eq:intrep}
	\rho(\mathfrak{B}):=\int_{\R}\sum_{n\in\mathbb{N}}\mathcal{H}^1(\Gamma_n^t)\delta_{\bb{T}_{\tilde{\bs{\gamma}}_n^t}}(\mathfrak{B})dt \quad\text{for every Borel }\mathfrak{B}\subset \mathfrak{S}(\R^2),
	\end{equation}
	then the integral exists and $\rho$ defines a Borel measure on $\mathfrak{S}(\R^2)$ such that \eqref{measonb} holds with $\bs{\mu}=\bs{\nu}$.
\end{prop}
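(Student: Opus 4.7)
The plan is to realize $\rho$ as the Lebesgue-integral in $t$ of a family of finite atomic measures $\rho_t$ on $\mathfrak{S}(\R^2)$. Setting $\ell_n^t := \mathcal{H}^1(\Gamma_n^t)$, define for each $t$ in the full measure set $G$ of Theorem \ref{rep}
\begin{equation*}
\rho_t \ :=\ \sum_{n\in\NatZer,\ \ell_n^t>0}\ell_n^t\ \delta_{\bb{T}_{\tilde{\bs{\gamma}}_n^t}}.
\end{equation*}
Since each $\bs{\gamma}_n^t$ is a rectifiable Jordan curve, the discussion at the end of Section \ref{sec:decss} ensures that $\bb{T}_{\tilde{\bs{\gamma}}_n^t}\in\mathfrak{S}(\R^2)$ and $\ell_n^t\bb{T}_{\tilde{\bs{\gamma}}_n^t}=\bb{R}_{\bs{\gamma}_n^t}$. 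Degenerate curves contribute nothing and the measure $\rho_t$ is independent of the chosen representative in the equivalence class $\dot{\mathcal{C}}$, because the definition is invariant under relabelling. Its total mass $\sum_n\ell_n^t=\mathcal{H}^1(\partial_M\Omega(t))$ is integrable in $t$ with integral $\|\bs\nu\|_{TV}$ by \eqref{eqrepabs}, so formally setting $\rho(\mathfrak{B}):=\int_\R\rho_t(\mathfrak{B})\,dt$ yields a finite measure of total mass $\|\bs\nu\|_{TV}$, provided we verify measurability of $t\mapsto\rho_t(\mathfrak{B})$.

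Next I would establish that measurability. For a bounded continuous $\phi:\mathfrak{S}(\R^2)\to\R$, consider $F_\phi(t):=\int\phi\,d\rho_t=\sum_n\ell_n^t\phi(\bb{T}_{\tilde{\bs{\gamma}}_n^t})$. I would show that $F_\phi$ is approximately continuous $\mathcal{L}_1$-a.e. by passing to the limit along sequences $t_m\to t_0$ in the sets $\Sigma_\eta$ of Proposition \ref{convmcur}: up to a subsequence, $\bb{T}_{\tilde{\bs{\gamma}}_n^{t_{m_\ell}}}\to\bb{T}_{\tilde{\bs{\gamma}}_n^{t_0}}$ weak-$*$ and $\ell_n^{t_{m_\ell}}\to\ell_n^{t_0}$ for each fixed $n$ (items (ii)--(iv) of Proposition \ref{convmcur}), and the tail $\sum_{n\geq p}\ell_n^{t_{m_\ell}}$ tends to $0$ uniformly in $\ell$ as $p\to\infty$ by the first limit in \eqref{estqcc} applied to $\partial_M E_t$ (using that the total length of the Jordan curves equals the perimeter). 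Combined with boundedness of $\phi$ this yields $F_\phi(t_{m_\ell})\to F_\phi(t_0)$, hence approximate continuity and measurability of $F_\phi$. To pass from continuous $\phi$ to indicators of Borel sets I would use a monotone-class argument: the class of $\mathfrak{B}$ for which $t\mapsto\rho_t(\mathfrak{B})$ is measurable contains all open subsets of $\mathfrak{S}(\R^2)$ (whose indicators are suprema of bounded continuous functions, by metrizability), is closed under complements within $\rho_t$-finiteness, and under disjoint countable unions, so covers the whole Borel $\sigma$-algebra. Then $\rho$ is well-defined and $\sigma$-additive by monotone convergence, completing its construction as a finite Borel measure on $\mathfrak{S}(\R^2)$.

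Finally, to verify \eqref{measonb}, fix a Borel set $B\subset\R^2$. By Fubini and the identity $\ell_n^t\bb{T}_{\tilde{\bs{\gamma}}_n^t}=\bb{R}_{\bs{\gamma}_n^t}$,
\begin{equation*}
\int_{\mathfrak{S}(\R^2)}\bb{T}(B)\,d\rho(\bb{T})=\int_\R\sum_n\ell_n^t\bb{T}_{\tilde{\bs{\gamma}}_n^t}(B)\,dt=\int_\R\sum_n\bb{R}_{\bs{\gamma}_n^t}(B)\,dt.
\end{equation*}
Since $\bs{\gamma}_n^t$ is a (non-degenerate) Jordan curve, the multiplicity $N(\bs{\gamma}_n^t,\cdot)$ equals $1$ on $\Gamma_n^t$, so \eqref{Rgammas} gives $\bb{R}_{\bs{\gamma}_n^t}(B)=\int_{\Gamma_n^t\cap B}\bs{\tau}_n^t\,d\mathcal{H}^1$, and comparison with \eqref{eqrep} yields $\bs\nu(B)$. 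An identical computation using $|\bb{R}_{\bs{\gamma}_n^t}|(B)=\mathcal{H}^1(\Gamma_n^t\cap B)$ and \eqref{eqrepabs} handles the total variation identity. The main obstacle is the measurability step: one must control the infinite sum over $n$ uniformly despite the permutation freedom inherent in the equivalence relation defining $\dot{\mathcal{C}}$, and the crucial tool is the uniform tail estimate \eqref{estqcc} of Proposition \ref{convmcur}(iv), which makes the continuity of $F_\phi$ effective rather than only pointwise per index.
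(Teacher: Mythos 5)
Your proof is correct and follows essentially the same route as the paper's: measurability of the integrand is extracted from Proposition~\ref{convmcur} (continuity along sequences in the sets $\Sigma_\eta$ together with tail control over $n$), and \eqref{measonb} is reduced to \eqref{eqrep}--\eqref{eqrepabs} through the identity $\mathcal{H}^1(\Gamma_n^t)\,\bb{T}_{\tilde{\bs{\gamma}}_n^t}=\bb{R}_{\bs{\gamma}_n^t}$. The paper merely packages the measurability step differently (as a composition of a continuous map into a quotient sequence space with a permutation-invariant Borel functional, rather than your monotone-class argument on continuous test functions) and replaces your Fubini step by an explicit upper/lower Riemann-sum computation, but these are differences of bookkeeping, not of substance.
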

\begin{proof}

As in Section~\ref{subsec:curves}, let
 $\mathcal{B}_1$ denote the unit ball in  $\mathcal{M}(\R^2)^2$
with the weak-$*$ topology.
Let $\mathfrak{B}\subset \mathcal{B}_1$ be Borel,
and $F:\R\to\R$ denote the integrand in \eqref{eq:intrep}.
Recall from Proposition \ref{convmcur} the $\sigma$-compact sets $\Sigma_\eta$ 
such that $\mathcal{L}_1(\R\setminus\Sigma_\eta)<\eta$ for $\eta>0$.
By the Borel-Cantelli lemma, $\Sigma_0:=\bigcup_{j\in\NatOne}\Sigma_{1/j^2}$ is   $\sigma$-compact   such that $\mathcal{L}_1(\R\setminus\Sigma_0)=0$. Hence, if  $F|_{\Sigma_\eta}$ is a Borel function, then $F$ is also
Borel.
We will show that $F|_{\Sigma_\eta}$ is Borel by writing it as a composition of Borel functions.

Let $\mathcal{Q}:=\ell_1(\NatZer)\times\mathcal{B}_1^\NatZer$ where $\mathcal{B}_1^\NatZer$ is given the product topology, and   $\dot{\mathcal{Q}}$ denote the quotient space under the relation $(a_n,\bs{\mu}_n)\sim(b_n,\bs{\nu}_n)$ if and only if there is a bijection $\sigma:\NatZer\to\NatZer$ such that $b_{\sigma(n)}=a_n$ and $\bs{\nu}_{\sigma(n)}=\bs{\mu}_n$.
We endow $\dot{\mathcal{Q}}$ with the quotient topology.
Define $f_1:\Sigma_\eta\to \dot{\mathcal{Q}}$ by $f_1(t):=[(\mathcal{H}^1(\Gamma^t_n),\bb{T}_{\tilde{\bs{\gamma}}^t_n})]$, 
where the bracket represents the equivalence class; note that indeed
$\sum_n\mathcal{H}^1(\Gamma_n^t)<\infty$, because this sum is $\mathcal{P}(E_t)$
which is uniformly bounded on $\Sigma_\eta$ by construction, see
proof of Proposition \ref{convmcc}.
By points $(iii)$ and $(iv)$ of Proposition \ref{convmcur}, $f_1$ is continuous (observe that $\sim$ takes quotient by all permutations,
not just those used to define $\dot{\mathcal{T}}$, 
which does not affect continuity).
Now let $\tilde{f}_2:\mathcal{Q}\to\R$ be defined by $\tilde{f}_2(a_n,\bs{\mu}_n):=\sum_na_n\chi_\mathfrak{B}(\bs{\mu}_n)$.
Clearly, $\tilde{f}_2$ is Borel since it is the limit of Borel functions, and since it is invariant under permutations on $n$
the quotient map $f_2:\dot{\mathcal{Q}}\to\R$ is well-defined and Borel.
%

Altogether, $F|_{\Sigma_\eta}=f_2\circ f_1$, is Borel and so is $F$.
Hence, since $F$ is nonnegative and its integral is bounded by $\int_{\R}\sum_{n\in\mathbb{N}}\mathcal{H}^1(\Gamma_n^t)=\|\bs{\nu}\|$, the set function $\rho$ given by \eqref{eq:intrep} defines a Borel measure on  $\mathcal{B}_1$.
By restriction ${\rho}$ defines a Borel measure on $\mathfrak{S}(\R^2)$. 
Finally we will show that the left equation of \eqref{measonb} holds, the proof for the right one is similar. 
Let $B\subset\R^2$ be Borel, $\{a_i\}_{i=0}^n$ be a partition of $[-1,1]$, $(T_1,T_2)$ be the components of $\bb{T}$, for $i<n$ and $j=1,2$, $\mathfrak{A}_i^j:=\{\bb{T}\in\mathfrak{S}(\R^2)|a_{i-1}\leq{T}_j(B)<a_i\}$, $\mathfrak{A}_n^j:=\{\bb{T}\in\mathfrak{S}(\R^2)|a_{n-1}\leq{T}_j(B)\leq1\}$, $M_j=\sum_ia_i\rho(\mathfrak{A}_i^j)$ and $m_j=\sum_ia_{i-1}\rho(\mathfrak{A}_i^j)$.
Then
$$
m_j=\sum_ia_{i-1}\int_{\R}\sum_{n\in\mathbb{N}}\mathcal{H}^1(\Gamma_n^t)\delta_{\bb{T}_{\tilde{\bs{\gamma}}_n^t}}(\mathfrak{A}_i^j)dt
\leq \int_{\R}\sum_i\sum_{n:\bb{T}_{\tilde{\bs{\gamma}}_n^t}\in\mathfrak{A}_i}\mathcal{H}^1(\Gamma_n^t){({\bb{T}}_{\tilde{\bs{\gamma}}_n^t}})_j(B)dt,
$$
where the right hand-side of this equation is equal to $(\bs{\nu}(B))_j$ in view of \eqref{eqrep}, Fubini's theorem and the fact that the $\mathfrak{A}_i^j$'s form a partition of $\mathfrak{S}(\R^2)$.
Analogously $(\bs{\nu}(B))_j\leq M_j$, hence, taking the limit as $\max\{a_i-a_{i-1}\}\to0$ and using $\rho(\mathfrak{S}(\R^2))=\|\bs{\nu}\|<\infty$, we get \eqref{measonb}.
\end{proof}

Theorem \ref{rep} $(iii)$ asserts approximate 
continuity of 
$\partial_M\Omega(t)$ with respect to $t$ in the weak-$*$ sense. 
Still, the $\Omega(t)$ could all have different 
topologies as can be seen from the following example. 

\begin{exa}\label{dif_topology}
We will generate a $BV$ function $\varphi_\infty$, valued in $[0,1]$,
whose suplevel sets $E_t$ all have different topologies.
Then, $\bs\nu:=\mathfrak{R}\grad\varphi_\infty$ is divergence-free and
$\Omega(t)=E_t$ in Theorem \ref{rep}, thereby 
yielding an example with the aforementioned property.

We construct $\varphi_\infty$
as the limit of a bounded increasing sequence $(\phi_m)$ of $BV$ functions.
Let us first define a family of sets of finite perimeter that we will use 
to construct the $\phi_m$.
For any two integers $m$ and $n$ such that $m\geq0$ and $1\leq n\leq 2^m$, define the set $b(n,m)\subset\R^2$ to be the closed ball around the point $(n,m)$ with perimeter $2^{-2m-1}$ (thus,  radius $2^{-2m-2}/\pi$) minus
$2^m$ pairwise disjoint nonempty open balls contained in this closed ball.
We pick the sum of the perimeters of this $2^m$ open balls 
to be strictly less than $2^{-2m-1}$.
Note that the $b(n,m)$ are pairwise disjoint.
Define  $\varphi_0:=\frac{1}{2}\chi_{b(1,0)}$ and, for $m>0$, $\varphi_m:=\varphi_{m-1} + \sum_{k=1}^{2^m}\frac{2k-1}{2^{m+1}}\chi_{b(k,m)}$.
Then $\|\grad\varphi_0\|_{TV}<1/2$, moreover for $m>0$:
\begin{align*}
\|\grad\varphi_m\|_{TV} &= \|\grad\varphi_{m-1}\|_{TV} + 
			   \sum_{k=1}^{2^m}\frac{2k-1}{2^{m+1}}\|\grad\chi_{b(k,m)}\|_{TV}	\\
			&< \|\grad\varphi_{m-1}\|_{TV} + 
			\sum_{k=1}^{2^m}\frac{2k-1}{2^{m+1}}(2^{-2m-1}+2^{-2m-1})	\\
			&= \|\grad\varphi_{m-1}\|_{TV} + \frac{2^{2m}}{2^{3m+1}},
\end{align*}
and hence, $\|\grad\varphi_m\|_{TV}<1$ for every $m$. 
Thus, $\varphi_\infty$, the pointwise limit of the nondecreasing sequence of 
functions $\{\varphi_m\}_m$, is a BV function (see \cite[Theorem 5.2.1]{Ziemer}).

Now, for $m$, $n$, $p$ and $q$ some integers such that $1\leq n\leq 2^m$ and $1\leq p\leq 2^q$, it is clear that $b(n,m)$ is topologically equivalent to $b(p,q)$ if and only if $q=m$.
Hence,  with the notation of Theorem \ref{rep}, we see that
given $s,t\in(0,1)$, the sets $\Omega(t)$ and $\Omega(s)$ 
can be topologically equivalent only if they contain, for each fixed $m$, the same number of sets from the family $\{b(n,m)\}_{n=1}^{2^m}$. 
However if $s<t$ then there exist two positive integers $m$ and $n$ such that $s<\frac{2n-1}{2^{m+1}}<t$,  thus $b(n,m)\subset\Omega(s)\setminus\Omega(t)$ and therefore $\Omega(t)$ is not topologically equivalent to $\Omega(s)$.
\end{exa}

\section{Applications to Inverse Magnetization Problems}
\label{AIP}

\subsection{Solutions to Extremal Problem 1}

For $\bs{\mu}, \bs{\nu}\in \mathcal{M}(\R^3)$ with
${\mathbf f}_{\bs\mu}$ to denote the Radon-Nikodym derivative of $\bs \mu$ 
with respect to $|\bs \nu|$, we define for    $|\bs \nu|$-a.e. $x$:
\begin{equation}
\label{wmu}
{\mathbf w}^{\bs \nu}_{\bs \mu}(x):=\begin{cases} \frac{{\mathbf f}_{\bs\mu} (x)}{|{\mathbf f}_{\bs\mu} (x)|}, & {\mathbf f}_{\bs\mu} (x)\neq 0,\\
{\mathbf u}_{\bs \nu}(x),& {\mathbf f}_{\bs\mu} (x)= 0.\end{cases}
\end{equation}

We put  $E= {\mathbf f}^{-1}_{\bs\mu}(0)$  and observe that
\begin{equation} \label{eq:wmunuInt}
	\int  {\mathbf w}^{\bs \nu}_{\bs{\mu}}\cdot  d\bs{\nu} =\int_{E^c} {\mathbf w}^{\bs \nu}_{\bs{\mu}}\cdot {\mathbf u}_{\bs{\nu}}\, d| \bs{\nu}| +|\bs{\nu}|(E). 
	\end{equation}
The next lemma provides a variational characterization of solutions to  Extremal Problem 1.

\begin{lemma}\label{var_equiv} Let $S\subset \R^3$ be closed and suppose 
	$\bs{\mu}, \bs{\nu}\in \mathcal{M}(S)^3$, with ${\mathbf w}^{\bs \nu}_{\bs{\mu}}$ and $E$ as above.   Then 
	\begin{equation}\label{var_equiv1}\|\bs{\mu}  \|_{TV}\le \|\bs{\mu}+ t \bs{\nu}\|_{TV}, \text{ for every } t > 0,
	\end{equation}
	if and only if
	\begin{equation} \label{eq:var_equiv}
	\int  {\mathbf w}^{\bs \nu}_{\bs{\mu}}\cdot  d\bs{\nu}  \ge 0. 
	\end{equation}
	Hence, $\|\bs{\mu}\|_{TV}=M_S(\bs{\mu})$ if and only if \eqref{eq:var_equiv} holds for every $S$-silent $\bs{\nu}\in \mathcal{M}(S)^3$.
The inequality \eqref{var_equiv1} is strict for every $t>0$  if the inequality \eqref{eq:var_equiv} is strict.
\end{lemma}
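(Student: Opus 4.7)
\medskip
\noindent\textbf{Plan of proof.}
The natural strategy is to view $g(t):=\|\bs{\mu}+t\bs{\nu}\|_{TV}$ as a convex function of $t\in[0,\infty)$ (convexity being immediate from the triangle inequality) and to show that its right-derivative at $0$ equals exactly $\int\mathbf{w}^{\bs\nu}_{\bs{\mu}}\cdot d\bs{\nu}$. Start from the Lebesgue decomposition of $\bs{\mu}$ with respect to $|\bs{\nu}|$, namely $\bs{\mu}=\mathbf{f}_{\bs{\mu}}|\bs{\nu}|+\bs{\mu}^{\perp}$ with $\bs{\mu}^{\perp}\perp|\bs{\nu}|$. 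Since $\bs{\mu}^{\perp}$ is mutually singular with $\bs{\nu}$, the total variation splits as
\[
\|\bs{\mu}+t\bs{\nu}\|_{TV}=\int\bigl|\mathbf{f}_{\bs{\mu}}+t\,\mathbf{u}_{\bs{\nu}}\bigr|\,d|\bs{\nu}|+\|\bs{\mu}^{\perp}\|_{TV},\qquad t\geq 0,
\]
so the perpendicular part cancels when we form $g(t)-g(0)$.

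Next, compute the pointwise right-derivative of $t\mapsto|\mathbf{f}_{\bs{\mu}}(x)+t\,\mathbf{u}_{\bs{\nu}}(x)|$ at $t=0$. On $E^c=\{\mathbf{f}_{\bs{\mu}}\neq 0\}$, differentiating the Euclidean norm gives $(\mathbf{f}_{\bs{\mu}}/|\mathbf{f}_{\bs{\mu}}|)\cdot\mathbf{u}_{\bs{\nu}}=\mathbf{w}^{\bs{\nu}}_{\bs{\mu}}\cdot\mathbf{u}_{\bs{\nu}}$; on $E$, $|\mathbf{f}_{\bs{\mu}}+t\mathbf{u}_{\bs{\nu}}|=t$ so the right-derivative equals $1=\mathbf{u}_{\bs{\nu}}\cdot\mathbf{u}_{\bs{\nu}}=\mathbf{w}^{\bs{\nu}}_{\bs{\mu}}\cdot\mathbf{u}_{\bs{\nu}}$ by definition \eqref{wmu} of $\mathbf{w}^{\bs{\nu}}_{\bs{\mu}}$ on $E$. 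The two cases dovetail exactly because $\mathbf{w}^{\bs{\nu}}_{\bs{\mu}}$ was set up to equal $\mathbf{u}_{\bs{\nu}}$ on $E$; this is the only delicate bookkeeping in the argument. The uniform Lipschitz bound $\bigl||\mathbf{f}_{\bs{\mu}}+t\mathbf{u}_{\bs{\nu}}|-|\mathbf{f}_{\bs{\mu}}|\bigr|\leq t$ and dominated convergence then give
\[
g'_+(0)=\int\mathbf{w}^{\bs{\nu}}_{\bs{\mu}}\cdot\mathbf{u}_{\bs{\nu}}\,d|\bs{\nu}|=\int\mathbf{w}^{\bs{\nu}}_{\bs{\mu}}\cdot d\bs{\nu},
\]
the last equality being precisely \eqref{eq:wmunuInt}. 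By convexity, $g(t)\geq g(0)$ for all $t>0$ is equivalent to $g'_+(0)\geq 0$ (one direction is the sign of a limit of nonnegative quotients; the other is $g(t)\geq g(0)+t\,g'_+(0)$), which proves the equivalence of \eqref{var_equiv1} and \eqref{eq:var_equiv}. Strictness of the latter upgrades to strictness of the former through the same inequality $g(t)\geq g(0)+t\,g'_+(0)$.

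Finally, for the characterization of $\|\bs{\mu}\|_{TV}=M_S(\bs{\mu})$, observe that the set of $S$-silent magnetizations is a linear subspace $V$ of $\mathcal{M}(S)^3$, and that $\bs{\sigma}\in\mathcal{M}(S)^3$ is $S$-equivalent to $\bs{\mu}$ iff $\bs{\sigma}=\bs{\mu}+\bs{\nu}$ for some $\bs{\nu}\in V$. Thus $\|\bs{\mu}\|_{TV}=M_S(\bs{\mu})$ is the statement $\|\bs{\mu}\|_{TV}\leq\|\bs{\mu}+\bs{\nu}\|_{TV}$ for every $\bs{\nu}\in V$. Since $V$ is stable under scaling by any $t>0$, the latter is equivalent to $\|\bs{\mu}\|_{TV}\leq\|\bs{\mu}+t\bs{\nu}\|_{TV}$ for all $t>0$ and all $\bs{\nu}\in V$, which by the first part is precisely \eqref{eq:var_equiv} for every $S$-silent $\bs{\nu}$. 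The main obstacle, as noted, is not difficulty but precision on the set $E$ where $\mathbf{f}_{\bs{\mu}}$ vanishes; the calculation of $g'_+(0)$ there must be matched with the convention \eqref{wmu}, after which everything reduces to a convex-analytic reading of first-order optimality.
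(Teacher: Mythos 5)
Your proof is correct and follows essentially the same route as the paper: Lebesgue decomposition of $\bs{\mu}$ with respect to $|\bs{\nu}|$, splitting over $E$ and $E^c$, identifying the first-order term as $\int\mathbf{w}^{\bs{\nu}}_{\bs{\mu}}\cdot d\bs{\nu}$, and then convexity of $t\mapsto\|\bs{\mu}+t\bs{\nu}\|_{TV}$ to pass between small and arbitrary $t$. The only (harmless) difference is technical: you obtain $g'_+(0)$ by pointwise differentiation plus dominated convergence with the uniform bound $\bigl||\mathbf{a}+t\mathbf{b}|-|\mathbf{a}|\bigr|\le t|\mathbf{b}|$, whereas the paper uses a Taylor expansion of $|\mathbf{a}+\epsilon\mathbf{b}|$ together with the observation that $|\bs{\nu}|(\{0<|\mathbf{f}_{\bs{\mu}}|<\epsilon\})=o(1)$ to control the error term.
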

\begin{proof}
	Let   $\bs{\mu}_s$ denote the singular part
 of $\bs{\mu}$ with respect to $|\bs{\nu}|$.  Then, for $\epsilon>0$, 
	\begin{equation}\begin{split}\label{pf:eqvar0}
	\|\bs{\mu}+\epsilon \bs{\nu}\|_{TV}&=\int |{\mathbf f}_{\bs{\mu}} +\epsilon {\mathbf u}_{\bs{\nu}}|\, d|\bs{\nu}| +\|\bs{\mu}_s\|_{TV} \\
	&=\int_{E^c} |{\mathbf f}_{\bs{\mu}} +\epsilon {\mathbf u}_{\bs{\nu}}|\, d|\bs{\nu}|+\epsilon|\bs{\nu}|(E)+\|\bs{\mu}_s\|_{TV} \\
	&=\|\bs{\mu} \|_{TV}+\epsilon \left(\int_{E^c} {\mathbf w}^{\bs \nu}_{\bs{\mu}}\cdot {\mathbf u}_{\bs{\nu}}\, d|\bs{\nu}|+|\bs{\nu}|(E)\right) +o(\epsilon) \\
	&=\|\bs{\mu} \|_{TV}+\epsilon \int  {\mathbf w}^{\bs \nu}_{\bs{\mu}}\cdot  d\bs{\nu}  +o(\epsilon),
	\end{split}
	\end{equation}
	where the above used that  for $\mathbf{a},\mathbf{b}\in\R^3$, $\mathbf{a}\neq 0$ and $|\mathbf{b}|=1$ (with $\mathbf{a}={\mathbf f}_{\bs{\mu}}$ and $\mathbf{b}= {\mathbf u}_{\bs{\nu}}$),
	$$
	|{\mathbf a}+\epsilon {\mathbf b}|
	=|{\mathbf a}|\left(1+2\epsilon \frac{\mathbf{ a}\cdot \mathbf {b}}{|{\mathbf a}|^2}+\epsilon^2 \frac{|{\mathbf b}|^2}{|{\mathbf a}|^2} \right)^{1/2}
	=|\mathbf a |+\epsilon \frac{\mathbf a}{|\mathbf a|}\cdot \mathbf b +\frac{1}{|\mathbf{a}|}\mathcal{O}(\epsilon^2),
	$$
 together with 
	$|\bs{\nu}|(\{ \mathbf{x}: 0<|{\mathbf f}_{\bs{\mu}}\left(\mathbf{x})|<\epsilon\}\right) =o(1)$ as   $\epsilon\to 0$.
	Using the convexity of the TV-norm we have for $0<\epsilon\le 1$ and $t>0$: 
	\begin{equation*}
	\|\bs{\mu}+t\epsilon\bs{\nu}\|_{TV} =\|(1-\epsilon)\bs{\mu}+\epsilon(\bs{\mu}+t\bs{\nu})\|_{TV} \le (1-\epsilon)\|\bs{\mu}\|_{TV}+\epsilon\|\bs{\mu}+t\bs{\nu}\|_{TV},
	\end{equation*}
	which implies
	\begin{equation}\label{pf:eqvar}
	t\frac{\|\bs{\mu}+\epsilon t\bs{\nu}\|_{TV}-\|\bs{\mu}\|_{TV}}{t\epsilon}\le \| \bs{\mu}+ t\bs{\nu}\|_{TV}-\|\bs{\mu}\|_{TV}.
	\end{equation}
	If \eqref{eq:var_equiv} holds, then it follows in view of \eqref{pf:eqvar0} (with $t\epsilon$ instead of $\epsilon$) that the limit of the left-hand side of \eqref{pf:eqvar} is nonnegative when $\epsilon\rightarrow 0^+$, which implies \eqref{var_equiv1}.
	Conversely, if \eqref{var_equiv1} holds then the left hand side of \eqref{pf:eqvar} is nonnegative and using \eqref{pf:eqvar0} we can take the limit as $\epsilon\rightarrow 0^+$ to obtain \eqref{eq:var_equiv}.    That the inequality \eqref{var_equiv1} is strict for every $t>0$ when the inequality \eqref{eq:var_equiv} is strict follows immediately from the above computations.
\end{proof}

We say that ${\bs{\mu}} \in \mathcal{M}(S)^3$ is {\em carried by} a set  if that set has full $|{\bs{\mu}}|$-measure; i.e., the complement has $|{\bs{\mu}}|$-measure zero.  Recall that a set $B\subset \R^n$ is {\em purely 1-unrectifiable} if $\mathcal{H}^1(E\cap B)=0$ for every 1-rectifiable set $E$.  Clearly a set of $\mathcal{H}^1$-measure zero is purely 1-unrectifiable.  

\begin{theorem}\label{Thm6.2}
	Let $S\subset \R^3$ be slender and closed and suppose  $\widetilde{\bs{\mu}} \in \mathcal{M}(S)^3$ is carried by a purely 1-unrectifiable set.  Then $\widetilde{\bs{\mu}}$ is strictly $TV$-minimal.  Moreover, if $\bs{\mu} \in \mathcal{M}(S)^3$  is $TV$-minimal on 
	$S$, then so is $ \bs{\mu}+\widetilde{\bs{\mu}}$. 
\end{theorem}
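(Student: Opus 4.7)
The plan is to apply the variational characterization of Lemma~\ref{var_equiv}, which reduces both claims to controlling the sign of $\int\mathbf{w}^{\bs{\nu}}_{\widetilde{\bs{\mu}}}\cdot d\bs{\nu}$ for every $S$-silent $\bs{\nu}\in\mathcal{M}(S)^3$, with strictness when $\bs{\nu}\neq 0$. Since $S$ is slender, \cite[Theorem~2.2]{BVHNS} gives that every such $\bs{\nu}$ is divergence-free. I would then invoke Smirnov's decomposition \eqref{Smi1} for $\bs{\nu}$ to argue that $|\bs{\nu}|$ is carried by a 1-rectifiable set, and therefore is mutually singular with $\widetilde{\bs{\mu}}$.

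Concretely, by inner regularity I first pick a Borel subset $A$ of a purely 1-unrectifiable carrier of $\widetilde{\bs{\mu}}$ with $|\widetilde{\bs{\mu}}|(\R^3\setminus A)=0$; subsets of purely 1-unrectifiable sets are again purely 1-unrectifiable. Applying \eqref{Smi1} with $B=A$ yields
$$|\bs{\nu}|(A)=\int_{\mathcal{C}_\ell}|\mathbf{R}_{\bs\gamma}|(A)\,d\rho(\mathbf{R}_{\bs\gamma}).$$
For each $\bs\gamma$, the image $\Gamma=\bs\gamma([a,b])$ is a 1-rectifiable curve, so $\mathcal{H}^1(\Gamma\cap A)=0$; combined with $|\mathbf{R}_{\bs\gamma}|\ll\mathcal{H}^1\mathcal{b}\Gamma$ from Lemma~\ref{RNcurve}, this forces $|\mathbf{R}_{\bs\gamma}|(A)=0$ for every $\bs\gamma$ and hence $|\bs{\nu}|(A)=0$. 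Consequently $\widetilde{\bs{\mu}}\perp|\bs{\nu}|$, so the Radon--Nikodym derivative $\mathbf{f}_{\widetilde{\bs{\mu}}}$ of $\widetilde{\bs{\mu}}$ with respect to $|\bs{\nu}|$ vanishes $|\bs{\nu}|$-a.e.

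Setting $E:=\mathbf{f}_{\widetilde{\bs{\mu}}}^{-1}(0)$, which has full $|\bs{\nu}|$-measure, formula \eqref{eq:wmunuInt} collapses to $\int\mathbf{w}^{\bs{\nu}}_{\widetilde{\bs{\mu}}}\cdot d\bs{\nu}=|\bs{\nu}|(E)=\|\bs{\nu}\|_{TV}$, strictly positive whenever $\bs{\nu}\neq 0$. The strict half of Lemma~\ref{var_equiv} then yields $\|\widetilde{\bs{\mu}}\|_{TV}<\|\widetilde{\bs{\mu}}+\bs{\nu}\|_{TV}$ for every nonzero $S$-silent $\bs{\nu}$, which, writing any $S$-equivalent competitor as $\widetilde{\bs{\mu}}+\bs{\nu}$, is exactly strict $TV$-minimality. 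For the final assertion, assuming $\bs{\mu}$ is $TV$-minimal, the same singularity gives $\mathbf{f}_{\bs{\mu}+\widetilde{\bs{\mu}}}=\mathbf{f}_{\bs{\mu}}$ $|\bs{\nu}|$-a.e., hence $\mathbf{w}^{\bs{\nu}}_{\bs{\mu}+\widetilde{\bs{\mu}}}=\mathbf{w}^{\bs{\nu}}_{\bs{\mu}}$ $|\bs{\nu}|$-a.e. via \eqref{wmu}, so $\int\mathbf{w}^{\bs{\nu}}_{\bs{\mu}+\widetilde{\bs{\mu}}}\cdot d\bs{\nu}=\int\mathbf{w}^{\bs{\nu}}_{\bs{\mu}}\cdot d\bs{\nu}\geq 0$ by Lemma~\ref{var_equiv} applied to $\bs{\mu}$, and one more application of the same lemma closes the argument for $\bs{\mu}+\widetilde{\bs{\mu}}$.

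The main technical hurdle I anticipate is the measurability of $A$ needed to evaluate \eqref{Smi1} on it, which is cleanly handled by inner regularity. Beyond that, everything reduces to the central observation that Smirnov's decomposition forces any divergence-free measure to concentrate on 1-rectifiable curves, which the purely 1-unrectifiable carrier of $\widetilde{\bs{\mu}}$ is invisible to.
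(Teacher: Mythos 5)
Your proposal is correct and follows essentially the same route as the paper: slenderness forces $S$-silent measures to be divergence-free, the Smirnov decomposition \eqref{Smi1} together with $|\mathbf{R}_{\bs\gamma}|\ll\mathcal{H}^1\mathcal{b}\Gamma$ yields mutual singularity with the purely 1-unrectifiable carrier, and Lemma~\ref{var_equiv} handles both conclusions. The only (immaterial) difference is that for strict minimality the paper reads it off directly from mutual singularity (additivity of the $TV$-norm), while you route it through the strict case of Lemma~\ref{var_equiv}; your extra care with choosing a Borel carrier and evaluating \eqref{eq:wmunuInt} simply makes explicit what the paper leaves implicit.
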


\begin{proof}
Since $S$ is slender,   any $S$-silent magnetization  $\bs{\nu}$ is divergence-free.  From the decomposition  \eqref{Smi1}, we then have that $\bs{\nu}$ and $\widetilde{\bs{\mu}}$ are mutually singular since the latter is carried by a purely 1-unrectifiable set, showing that 
$\widetilde{\bs{\mu}}$ is strictly $TV$-minimal.
	
Next suppose $\bs{\mu} \in \mathcal{M}(S)^3$ satisfies $\|\bs{\mu}\|_{TV}=M_S(\bs{\mu})$ and $\bs{\nu}\in\mathcal{M}(S)^3$ be $S$-silent.  Since $\bs{\nu}$ and $\widetilde{\bs{\mu}}$ are mutually singular, $ d\widetilde{\bs{\mu}}/d|\bs{\nu}|=0$  and thus,  recalling definition \eqref{wmu}, we see that ${\mathbf w}^{\bs \nu}_{\bs \mu}={\mathbf w}^{\bs \nu}_{\bs\mu+\widetilde{\bs{\mu}}}$, $|\bs{\nu}|$-a.e.  Lemma \ref{var_equiv}  then implies
 $\|\bs{\mu}+\widetilde{\bs{\mu}}\|_{TV}=M_S(\bs{\mu}+\widetilde{\bs{\mu}})$.
\end{proof}

The first assertion of Theorem~\ref{Thm6.2} sharpens   Theorem~2.6 of \cite{BVHNS}   stating that a magnetization  supported on a purely 1-unrectifiable set is strictly $TV$-minimal.   
In the case that $S$ is planar, this result can be strengthened by the following theorem.

\begin{theorem}\label{Thm6.3}
	Let $S\subset\R^2\times\{0\}$ be   closed  and suppose $\bs{\mu}$  is a magnetization carried by a Borel set $Z\subset S$ that satisfies
	\begin{equation}\label{EP1Cond}
	\mathcal{H}^1(\Gamma \cap Z) \le \mathcal{H}^1(\Gamma\setminus Z), 
	\end{equation}
	for any rectifiable Jordan curve $\Gamma\subset S$.
	Then $\bs{\mu}$ is $TV$-minimal on $S$.  If $\bs{\nu}\in \mathcal{M}(S)^3$ is $S$-silent 
	and $\|\bs{\mu}+\bs{\nu}\|_{TV}=\|\bs{\mu}\|_{TV}$, 
	then equality holds in \eqref{EP1Cond} when $\Gamma=\Gamma_n^t$ for almost every $t$  and every $n\in \NatZer$ in the loop decomposition of $\bs{\nu}$. 
	In particular, $\bs{\mu}$ is
	strictly $TV$-minimal on $S$ if the inequality \eqref{EP1Cond} is strict for every nondegenerate   $\Gamma\subset S$,  and then  $\bs{\mu}+\tilde{\bs{\mu}}$ is also strictly $TV$-minimal when $\tilde{\bs{\mu}}$ is carried by a purely 1-unrectifiable set. 
	\end{theorem}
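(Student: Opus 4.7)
The plan is to apply Lemma~\ref{var_equiv}: it suffices to check that for every $S$-silent $\bs{\nu}\in\mathcal{M}(S)^3$, one has $\int\mathbf{w}_{\bs{\mu}}^{\bs{\nu}}\cdot d\bs{\nu}\ge 0$, with strict inequality when $\bs{\nu}\neq 0$ in case \eqref{EP1Cond} is strict for nondegenerate $\Gamma$. Since $S\subset\R^2\times\{0\}$ is closed and therefore slender in $\R^3$, any $S$-silent $\bs{\nu}$ is divergence-free, and by Lemma~\ref{2Dsole} its third component vanishes while $\bs{\nu}_T=(\nu_1,\nu_2)^T$ is a divergence-free planar measure. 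Applying Theorem~\ref{rep} to $\bs{\nu}_T$ furnishes, for a.e.~$t\in\R$, rectifiable Jordan curves $\Gamma_n^t\subset\mathrm{supp}\,\bs{\nu}\subset S$ with oriented unit tangents $\bs{\tau}_n^t$.

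The second step is to evaluate $\int\mathbf{w}_{\bs{\mu}}^{\bs{\nu}}\cdot d\bs{\nu}$ via \eqref{eqrepint} applied to the first two components of $\mathbf{w}_{\bs{\mu}}^{\bs{\nu}}$ (the third drops out as $\nu_3=0$). Because $\bs{\mu}$ is carried by $Z$, the Radon--Nikodym derivative $\mathbf{f}_{\bs{\mu}}=d\bs{\mu}/d|\bs{\nu}|$ vanishes $|\bs{\nu}|$-a.e.\ on $Z^c$, so by \eqref{wmu} one has $\mathbf{w}_{\bs{\mu}}^{\bs{\nu}}=\mathbf{u}_{\bs{\nu}}$ there. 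Splitting each inner integral over $\Gamma_n^t$ along $Z$ and using Theorem~\ref{rep}(v), which gives $\mathbf{u}_{\bs{\nu}}=\bs{\tau}_n^t$ $\mathcal{H}^1$-a.e.\ on $\Gamma_n^t$, one finds on $\Gamma_n^t\setminus Z$ that the integrand is $1$, while on $\Gamma_n^t\cap Z$ the bound $|\mathbf{w}_{\bs{\mu}}^{\bs{\nu}}\cdot\bs{\tau}_n^t|\le 1$ suffices. Hence
\begin{equation*}
\int_{\Gamma_n^t}\mathbf{w}_{\bs{\mu}}^{\bs{\nu}}\cdot\bs{\tau}_n^t\,d\mathcal{H}^1\;\ge\;\mathcal{H}^1(\Gamma_n^t\setminus Z)-\mathcal{H}^1(\Gamma_n^t\cap Z)\;\ge\;0,
\end{equation*}
the last inequality by \eqref{EP1Cond}, since $\Gamma_n^t\subset S$. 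Summing over $n$ and integrating in $t$ yields $\int\mathbf{w}_{\bs{\mu}}^{\bs{\nu}}\cdot d\bs{\nu}\ge 0$, and $TV$-minimality of $\bs{\mu}$ follows.

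For the equality characterization, the hypothesis $\|\bs{\mu}+\bs{\nu}\|_{TV}=\|\bs{\mu}\|_{TV}$ combined with the $TV$-minimality of $\bs{\mu}$ and convexity of the $TV$-norm forces $\|\bs{\mu}+t\bs{\nu}\|_{TV}=\|\bs{\mu}\|_{TV}$ for every $t\in[0,1]$; the first-order expansion \eqref{pf:eqvar0} from the proof of Lemma~\ref{var_equiv} then makes $\int\mathbf{w}_{\bs{\mu}}^{\bs{\nu}}\cdot d\bs{\nu}=0$. Since each inner integral in the display above is nonnegative, it must vanish, and the sandwich with $\mathcal{H}^1(\Gamma_n^t\setminus Z)-\mathcal{H}^1(\Gamma_n^t\cap Z)\ge 0$ forces equality in \eqref{EP1Cond} along $\Gamma_n^t$ for a.e.~$t$ and every $n$. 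Under the strict version of \eqref{EP1Cond} for every nondegenerate $\Gamma\subset S$, strict $TV$-minimality is then immediate: for any nonzero $S$-silent $\bs{\nu}$, \eqref{eqrepabs} yields a set of $t$ of positive $\mathcal{L}_1$-measure admitting some nondegenerate $\Gamma_n^t$, making $\int\mathbf{w}_{\bs{\mu}}^{\bs{\nu}}\cdot d\bs{\nu}>0$ and invoking the strict case of Lemma~\ref{var_equiv}.

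Finally, for $\bs{\mu}+\tilde{\bs{\mu}}$ with $\tilde{\bs{\mu}}$ carried by a purely 1-unrectifiable set $U$, I use \eqref{eqrepabs} to observe that $|\bs{\nu}|(U)=\int_\R\sum_n\mathcal{H}^1(\Gamma_n^t\cap U)\,dt=0$, since each rectifiable $\Gamma_n^t$ meets $U$ in an $\mathcal{H}^1$-null set. Thus $\tilde{\bs{\mu}}\perp|\bs{\nu}|$, so $d\tilde{\bs{\mu}}/d|\bs{\nu}|=0$ and $\mathbf{w}_{\bs{\mu}+\tilde{\bs{\mu}}}^{\bs{\nu}}=\mathbf{w}_{\bs{\mu}}^{\bs{\nu}}$ $|\bs{\nu}|$-a.e.; the strict positivity of $\int\mathbf{w}_{\bs{\mu}}^{\bs{\nu}}\cdot d\bs{\nu}$ established above therefore transfers, and Lemma~\ref{var_equiv} gives the strict $TV$-minimality of $\bs{\mu}+\tilde{\bs{\mu}}$. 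The main technical leverage throughout is Theorem~\ref{rep}(v), which converts the abstract pairing $\int\mathbf{w}_{\bs{\mu}}^{\bs{\nu}}\cdot d\bs{\nu}$ into the purely geometric comparison \eqref{EP1Cond}; once this identification is set up, the remaining arguments are bookkeeping with the loop decomposition and the convexity machinery of Lemma~\ref{var_equiv}.
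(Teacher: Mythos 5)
Your proposal is correct and follows essentially the same route as the paper: reduce to the variational criterion of Lemma~\ref{var_equiv}, apply Lemma~\ref{2Dsole} and the loop decomposition of Theorem~\ref{rep} to an $S$-silent $\bs{\nu}$, and use Theorem~\ref{rep}(v) together with the transfer of $|\bs{\nu}|$-null sets to $\mathcal{H}^1$-null sets on the curves (via \eqref{eqrepabs}--\eqref{eqrepint2}) to bound each $\int_{\Gamma_n^t}\mathbf{w}^{\bs{\nu}}_{\bs{\mu}}\cdot(\bs{\tau}_n^t,0)\,d\mathcal{H}^1$ below by $\mathcal{H}^1(\Gamma_n^t\setminus Z)-\mathcal{H}^1(\Gamma_n^t\cap Z)\ge 0$. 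The only (cosmetic) differences are that you split the curve integrals along $Z$ rather than along $E=\mathbf{f}_{\bs{\mu}}^{-1}(0)$ as the paper does, and you handle the purely 1-unrectifiable addendum via the identity $\mathbf{w}^{\bs{\nu}}_{\bs{\mu}+\tilde{\bs{\mu}}}=\mathbf{w}^{\bs{\nu}}_{\bs{\mu}}$ (the paper's Theorem~\ref{Thm6.2} argument) instead of observing that \eqref{EP1Cond} is unchanged when a purely 1-unrectifiable set is adjoined to $Z$.
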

\begin{proof}
	Let $\bs{\nu}$ be an  $S$-silent magnetization with ${\mathbf f}_{\bs\mu}$,  ${\mathbf w}^{\bs \nu}_{\bs{\mu}}$ as  in \eqref{wmu}, and loop decompositions $\{\Gamma_n^t\}$ and recall 
	$E= \bb{f}^{-1}_{\bs\mu}(0)$.
	Also let   $\bs{\mu}_s$ denote the singular part
	of $\bs{\mu}$ with respect to $|\bs{\nu}|$. 
	By Lemma \ref{2Dsole} $\bs{\nu}=(\bs{\nu}_T,0)$ where $\bs{\nu}_T\in\mathcal{M}(S)^2$ is divergence-free.
	For    $t\in\R$ and $n\in \NatZer$, let $\Gamma_n^t$ and $\bs{\tau}_n^t$ be as in Theorem \ref{rep} from the decomposition of $\bs{\nu}_T$. 

By assertion $(v)$ of Theorem~\ref{rep}, we know  for  a.e. $t\in\R$ and for every $n\in \NatZer$ that $\bb{u}_{\bs{\nu}}(x)=(\bs{\tau}_n^t(x),0)$ for $\mathcal{H}^1$-a.e.   $x\in \Gamma_n^t$.
	Note also that by $(iv)$ of Theorem \ref{rep}, $\bb{w}^{\bs \nu}_{\bs{\mu}}\cdot (\bs{\tau}_n^t,0)$ is $\mathcal{H}^1$-integrable on $\Gamma_n^t$ for every $n\in \NatZer$ and a.e. $t\in\R$.
	Now, for every such $t$,
	\begin{equation}\label{CarryIneq}
	\begin{split}	
	\int_{\Gamma_n^t}  {\mathbf w}^{\bs \nu}_{\bs{\mu}}\cdot (\bs{\tau}_n^t,0) \, d\mathcal{H}^1
	&=\int_{\Gamma_n^t\cap E^c} {\mathbf w}^{\bs \nu}_{\bs{\mu}}\cdot (\bs{\tau}_n^t,0) \, d\mathcal{H}^1 +\int_{\Gamma_n^t\cap E} {\mathbf u}_{\bs{\nu}}\cdot (\bs{\tau}_n^t,0) \, d\mathcal{H}^1 \\
	&=\int_{\Gamma_n^t\cap E^c} {\mathbf w}^{\bs \nu}_{\bs{\mu}}\cdot (\bs{\tau}_n^t,0) \, d\mathcal{H}^1 +\mathcal{H}^1(\Gamma_n^t\cap E) \\
	&\ge -\mathcal{H}^1(\Gamma_n^t\cap E^c) +\mathcal{H}^1(\Gamma_n^t\cap E) .	\end{split}\end{equation}
	
	From \eqref{eqrepint2} we have
	$$
	0=\int_{Z^c}|\bb{f}_{\bs{\mu}}| d\bs{|\nu|}	=  \int_{T_0} \sum_{n\in \NatZer}\left(\int_{Z^c}|\bb{f}_{\bs{\mu}}|\, d\left(\mathcal{H}^1\mathcal{b} \Gamma_n^t\right)\right) \, dt.
	$$
	Observing that   $|{\bf f}_{\bs \mu}(x)|>0$ for $x\in E^c$, the above equation implies that the $\mathcal{L}_1$-measure of
		 $$T_0:=\{t\in\R\ |\ \exists n\in\NatZer : \mathcal{H}^1(\Gamma_n^t\cap E^c\cap Z^c)\ne0\}$$ is zero; that is, $\mathcal{H}^1(\Gamma_n^t\cap E^c\cap Z^c)=0$ for a.e. $t$.
	Thus, by \eqref{CarryIneq} we get
	\begin{equation}\label{GammaZ1}
	\int_{\Gamma_n^t}  {\mathbf w}^{\bs \nu}_{\bs{\mu}}\cdot (\bs{\tau}_n^t,0) \, d\mathcal{H}^1
	\ge -\mathcal{H}^1(\Gamma_n^t\cap Z) +\mathcal{H}^1(\Gamma_n^t\setminus Z)\geq 0, 
	\end{equation}
	where  the last inequality follows from the condition \eqref{EP1Cond}.
	Therefore, by \eqref{eqrepint},
	\begin{equation}\label{GammaZ2}
	\int_{\R^2}  {\mathbf w}^{\bs \nu}_{\bs{\mu}}\cdot  d\bs{\nu} = \int_\R\sum_{n\in N^t}\left(\int_{\R^2} {\mathbf w}^{\bs \nu}_{\bs{\mu}}\cdot (\bs{\tau}_n^t,0) \, d\left(\mathcal{H}^1\mathcal{b} \Gamma_n^t\right)\right) \, dt 
	\ge 0,
	\end{equation} 
	and, hence,   Lemma \ref{var_equiv} gives us $\|\bs{\mu}  \|_{TV}\le \|\bs{\mu}+  \bs{\nu}\|_{TV}$.    
	Moreover, if there is a set of positive measure $E\subset\R$ such that for every $t\in E$ there exists an $n$ for which the rightmost inequality in \eqref{GammaZ1} is strict, then the inequality in \eqref{GammaZ2} is also strict.
	Finally, \eqref{EP1Cond} is invariant upon adding a purely 1-unrectifiable set to $Z$.
\end{proof}

\begin{corollary}\label{Cor5.4}
	Let $S\subset\R^2\times\{0\}$ be   closed  and suppose $\bs{\mu}$  is a magnetization carried by a Borel set $Z\subset S$ that is contained in a purely 1-unrectifiable set plus a countable union $ \bigcup_{k\in K}L_k$ where  the $L_k$ are disjoint line segments such that the distance from any $L_k$ to any $L_j$, $j\ne k$, is greater than or equal to the length of $L_k$.   
	Then \eqref{EP1Cond} holds for any rectifiable Jordan curve $\Gamma$, and thus  $ \bs{\mu} $ is $TV$-minimal on $S$.  
	Moreover, if the distance from any $L_k$ to any $L_j$, $j\ne k$, is strictly greater than the length of $L_k$, then \eqref{EP1Cond} is strict and $ \bs{\mu} $ is strictly $TV$-minimal on $S$.
\end{corollary}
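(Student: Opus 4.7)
The plan is to verify condition \eqref{EP1Cond} of Theorem~\ref{Thm6.3} for every rectifiable Jordan curve $\Gamma\subset S$ and invoke that theorem. Since $\Gamma$ is $1$-rectifiable, $\mathcal{H}^1(\Gamma\cap P)=0$ for any purely $1$-unrectifiable $P$; thus the purely $1$-unrectifiable part of $Z$ is irrelevant to both sides of \eqref{EP1Cond}, and with $a_k:=\mathcal{H}^1(\Gamma\cap L_k)$, $\ell_k:=\mathcal{H}^1(L_k)$, the task reduces to $2\sum_k a_k\leq\mathcal{H}^1(\Gamma)$. Since the inequality involves only countable sums bounded by $\mathcal{H}^1(\Gamma)<\infty$, it suffices to handle one finite subfamily at a time and pass to the limit.

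Assuming finitely many $L_k$'s, I would parametrize $\Gamma$ by a unit-speed Jordan map $\bs{\gamma}:[0,L]\to\R^2$ with $L=\mathcal{H}^1(\Gamma)$, $\bs{\gamma}(0)=\bs{\gamma}(L)$, $\bs{\gamma}$ injective on $[0,L)$, and regard the domain as a circle. Set $A_k:=\bs{\gamma}^{-1}(L_k)$ and $A:=\bigcup_k A_k$ (now closed). Components of $A$ are closed arcs sent injectively onto sub-segments of the appropriate $L_k$, while components of $A^c$ are open arcs $\beta$ mapping to $\Gamma\setminus\bigcup_k L_k$, with endpoints on $\bigcup_k L_k$ and $\sum_\beta|\beta|=\mathcal{H}^1(\Gamma)-\sum_k a_k$. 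Call $\beta$ \emph{internal to} $L_k$ when both endpoints lie on $L_k$, and \emph{external from $L_k$ to $L_j$} when they lie on distinct $L_k,L_j$; in the latter case the distance hypothesis yields $|\beta|\geq d(L_k,L_j)\geq\max(\ell_k,\ell_j)\geq\tfrac12(\ell_k+\ell_j)$. Letting $e_k$ be the number of external $\beta$'s with an endpoint on $L_k$, assign to $L_k$ the charge
\[
\chi_k\;:=\;\sum_{\beta\text{ internal to }L_k}|\beta|\;+\;\tfrac{\ell_k}{2}\,e_k,
\]
so that $\sum_k\chi_k\leq\sum_\beta|\beta|$; the finite case thus reduces to $\chi_k\geq a_k$ for every $k$.

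The combinatorial core, which I expect to be the main obstacle, is the following. Alternation of $A$- and $A^c$-components on the circle implies that the $2c_k$ $\beta$-endpoints on $L_k$ decompose as $2s_k+e_k$, where $c_k,s_k$ count the $L_k$-components of $A$ and the $\beta$'s internal to $L_k$; hence $e_k$ is even. If $e_k\geq 2$, then immediately $\chi_k\geq\ell_k\geq a_k$. If $e_k=0$, walking cyclically around the circle shows every component of $A$ must be an $L_k$-component (any transition to another $L_j$-component would be mediated by an external $\beta$), so $\Gamma$ visits only $L_k$; enumerating the alternating components cyclically as $J_1,B_1,\ldots,J_N,B_N$ and writing $\bs{\gamma}(J_i)=[x_i,y_i]\subset L_k$ with $|J_i|=|y_i-x_i|$ and $\sum_i|J_i|=a_k$, the closed polygonal path on the line through $L_k$ obtained by concatenating the segments $[x_i,y_i]$ with the chords $[y_i,x_{i+1}]$ has length $a_k+\sum_i|y_i-x_{i+1}|$; as a closed path on a line it is at least $2D$ with $D=\mathrm{diam}\{x_i,y_i\}_i$, and since $a_k\leq D$ we obtain $\sum_i|B_i|\geq\sum_i|y_i-x_{i+1}|\geq a_k=\chi_k$.

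Under the strict distance hypothesis, each external $\beta$ satisfies $|\beta|>\tfrac12(\ell_k+\ell_j)$ strictly and each $B_i$ in the single-$L_k$ case satisfies $|B_i|>|y_i-x_{i+1}|$ strictly, since the chord $[y_i,x_{i+1}]$ lies on $L_k$ while $B_i$ lies off it; the excess in the charge inequality then propagates to make \eqref{EP1Cond} strict for every nondegenerate $\Gamma$ meeting $Z$ (the case $\mathcal{H}^1(\Gamma\cap Z)=0$ being trivial). Theorem~\ref{Thm6.3} then delivers strict $TV$-minimality, and the final clause on adding a $\widetilde{\bs{\mu}}$ carried by a purely $1$-unrectifiable set is the last sentence of that theorem.
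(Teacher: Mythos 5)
Your reduction to $2\sum_k\mathcal{H}^1(\Gamma\cap L_k)\le\mathcal{H}^1(\Gamma)$ and your use of the bound $|\beta|\ge d(L_k,L_j)\ge\max(\ell_k,\ell_j)$ for arcs joining distinct segments are exactly the mechanism of the paper's proof, and the charging scheme for external arcs is sound as far as it goes. But there are two genuine gaps. The first is that your combinatorial core presupposes that $A_k=\bs{\gamma}^{-1}(L_k)$ and its complement split into \emph{finitely many} alternating arcs. This can fail: $\bs{\gamma}^{-1}(L_k)$ can be a nowhere dense compact set of positive $\mathcal{L}_1$-measure (e.g.\ let $\Gamma$ contain the graph of $t\mapsto\mathrm{dist}(t,C)$ over a fat Cantor set $C\subset L_k$, closed up below the line). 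Then $c_k$ and $s_k$ are infinite, so the identity $2c_k=2s_k+e_k$ is meaningless, and, more seriously, in the $e_k=0$ case there is no cyclic enumeration $J_1,B_1,\ldots,J_N,B_N$ and the components $J_i$ are uncountably many singletons with $\sum_i|J_i|=0\ne a_k$, so the polygonal-path argument collapses. The parity of $e_k$ can be rescued (each external $\beta$ has length at least $\min_{k\in K'}\ell_k>0$, so there are finitely many of them and one can count endpoints of the complementary closed arcs), but the $e_k=0$ case is precisely the assertion that a rectifiable Jordan curve meets a line in at most half its length, and in general this needs a different proof --- for instance, projecting orthogonally onto the line and using that the signed crossing number of the closed path $\pi\circ\bs{\gamma}$ vanishes at a.e.\ level, so a.e.\ point of $\Gamma\cap L_k$ is hit at least once more by the projection of $\Gamma\setminus L_k$, whence $\mathcal{H}^1(\Gamma\setminus L_k)\ge\int N(\pi\circ\bs{\gamma}|_{A^c},\cdot)\,d\mathcal{L}_1\ge\mathcal{H}^1(\Gamma\cap L_k)$.

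The second gap concerns strictness: proving $2\sum_{k\in K'}a_k<\mathcal{H}^1(\Gamma)$ for every finite subfamily $K'$ only yields the non-strict inequality in the limit, and the excess you extract is not uniform in $K'$ --- the external arcs, hence the charges, change as $K'$ grows, and the margins $d(L_k,L_j)-\max(\ell_k,\ell_j)$ may tend to $0$ along the family. ``The excess propagates'' is exactly the point that needs an argument here. The paper sidesteps both difficulties by working with the full countable family at once: to each $L_k$ met by $\Gamma$ (when $\Gamma$ meets at least two segments) it assigns a sub-arc $\Gamma_k\subset\Gamma$ running from $L_k$ to some other $L_j$, with the $\Gamma_k$ pairwise non-overlapping, so that $\mathcal{H}^1(\Gamma\cap L_k)\le\ell_k\le d(L_k,L_j)\le\mathcal{H}^1(\Gamma_k)$ holds \emph{termwise}; summing over $k$ gives \eqref{EP1Cond} directly, and under the strict separation hypothesis each term is strict, so the summed inequality is strict without any limiting procedure. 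You should either adopt such a termwise assignment or supply a uniform lower bound on the excess before passing to the limit.
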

\begin{proof}
	By the last assertion of Theorem \ref{Thm6.3}, it is enough to assume $Z$ is contained in a countable union of line segments with the aforementioned properties.
	 Let $\Gamma$ be a  rectifiable Jordan curve   oriented by a parametrization ${\bs \gamma}$.  Without loss of generality 
	we may assume that $Z\cap L_k\neq \emptyset$ for all $k\in K$.  If $K=\{1\}$ is a singleton, then (since $L_1$ is a line segment)
	$$\mathcal{H}^1(\Gamma \cap Z)\le  \mathcal{H}^1(\Gamma \cap L_1)<
	\mathcal{H}^1(\Gamma \setminus L_1)\le\mathcal{H}^1(\Gamma \setminus Z).
	$$
	Otherwise, for each $k\in K$ there is some directed sub-arc $\Gamma_k\subset \Gamma$ with initial point in $L_k$,  end  point in  some $L_j$
	for $j\neq k$, and interior in the complement of $\bigcup_{\ell\neq k}L_\ell$. 
	Note that for $j\neq k\in K$, the interiors of $\Gamma_k$ and $\Gamma_j$ are disjoint, 
	and  that $\mathcal{H}^1(\Gamma \cap L_k)\le \mathcal{H}^1(\Gamma_k)$ by assumption. Also note that this inequality is strict under the final assumption. Thus,
	$$\mathcal{H}^1(\Gamma \cap Z)\le  \sum_{k\in K}\mathcal{H}^1(\Gamma \cap L_k)
	 \le \sum_{k\in K}\mathcal{H}^1( \Gamma_k)\le\mathcal{H}^1(\Gamma \setminus Z),
	$$
	where the second inequality is strict under the last assumption.
\end{proof}

We next characterize the space of $S$-silent 
magnetizations when $S$ contains only a finite number of  Jordan curves.
First we consider the class of closed $S\subset \R^2$ that contain no {\em rectifiable} Jordan curve at 
all, and hence, cannot hold nontrivial silent magnetizations.  We call such  $S$   {\em tree-like}.  Note that any closed purely 1-unrectifiable set is tree-like, but  the converse is not true.   We also note that a tree-like set may contain a Jordan curve, such as the Koch curve, which is not rectifiable.   
As a consequence of Theorem \ref{rep} we obtain the following result.

\begin{lemma} \label{lem:treelike}
Let $S$ be a closed subset of $\R^2\times\{0\}$.  If
$\bs{\mu}\in \mathcal{M}(S)^3$ is nonzero and $S$-silent, then  the support of 
$\bs{\mu}$ contains a rectifiable Jordan curve.  Hence, if $S$ is tree-like the only $S$-silent magnetization is the zero magnetization. 
\end{lemma}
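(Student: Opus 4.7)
The plan is to reduce to the planar, divergence-free setting and then read off the conclusion directly from the loop decomposition of Theorem~\ref{rep}. I would first verify that $S$ is slender as a subset of $\R^3$: being contained in a plane it has $\mathcal{L}_3$-measure zero, and each connected component of $\R^3\setminus S$ contains one of the open half-spaces $\{x_3>0\}$, $\{x_3<0\}$, both of infinite Lebesgue measure. Hence, by the partial converse recalled after \eqref{Phibextension} (\cite[Theorem~2.2]{BVHNS}), any $S$-silent $\bs{\mu}\in\mathcal{M}(S)^3$ is divergence-free on $\R^3$. Lemma~\ref{2Dsole} then writes $\bs{\mu}=(\bs{\mu}_T,0)$ with $\bs{\mu}_T\in\mathcal{M}(S)^2$ divergence-free in the plane, and $\bs{\mu}\neq 0$ forces $\|\bs{\mu}_T\|_{TV}>0$.

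Next, I would apply Theorem~\ref{rep} to $\bs{\mu}_T$, obtaining for a.e.\ $t$ the possibly degenerate rectifiable Jordan curves $\Gamma_n^t$ with $\Gamma_n^t\subset\text{supp}\,\bs{\mu}_T$. Formula \eqref{eqrepabs} gives
\[
0<\|\bs{\mu}_T\|_{TV}=\int_\R\sum_{n\in\NatZer}\mathcal{H}^1(\Gamma_n^t)\,dt,
\]
so some $t\in G$ and some $n$ must satisfy $\mathcal{H}^1(\Gamma_n^t)>0$; the associated $\Gamma_n^t$ is then a genuine (nondegenerate) rectifiable Jordan curve. Under the identification of $\R^2$ with $\R^2\times\{0\}$, we have $\text{supp}\,\bs{\mu}=\text{supp}\,\bs{\mu}_T$, so $\text{supp}\,\bs{\mu}$ contains the rectifiable Jordan curve $\Gamma_n^t$, establishing the first claim.

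The second assertion follows at once: if $S$ is tree-like, then the closed set $\text{supp}\,\bs{\mu}\subset S$ contains no rectifiable Jordan curve, which by the first part forces $\bs{\mu}=0$. No serious obstacle is anticipated here; the work has already been done inside Theorem~\ref{rep}, and this lemma is essentially a direct reading of items (i) and (iv) of that theorem combined with slenderness of a planar set.
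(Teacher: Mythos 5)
Your proof is correct and follows exactly the route of the paper's own (much terser) argument: slenderness of a planar set gives divergence-freeness via \cite[Theorem~2.2]{BVHNS}, and then Theorem~\ref{rep} — in particular \eqref{eqrepabs} and item (i) — produces a nondegenerate rectifiable Jordan curve inside the support. You have simply supplied the details the paper leaves implicit; nothing further is needed.
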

\begin{proof}
	Since $S\subset\R^2\times\{0\}$, it is slender and hence $S$-silent magnetizations are divergence free.
	The lemma now follows from Theorem \ref{rep}.
\end{proof}

For a closed set $S\subset \R^2\times\{0\}$, let $\Sigma(S)$ denote the linear subspace of 
  $\mathcal{M}(S)^3$ consisting of $S$-silent sources.   The previous lemma shows that 
  $\Sigma(S)$ is the trivial subspace when $S$ is tree-like.   
  The next theorem provides sufficient conditions that  $\Sigma(S)$ is finite dimensional and generalizes the second assertion of Lemma~\ref{lem:treelike} when $\mathcal{H}^1(S)$ is finite. 

\begin{theorem}\label{thm:SigS}
Let $S\subset \R^2\times\{0\}$ be closed with empty interior.  If the number $n$ of bounded connected components of $\R^2\times\{0\}\setminus S$ is finite, then
the dimension of $\Sigma(S)$ is less than or equal to $n$.
Furthermore, the dimension is equal to $n$ if $\mathcal{H}^1(S)$ is finite.
\end{theorem}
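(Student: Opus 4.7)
The plan is to parametrize $\Sigma(S)$ by the $n$ constants that a silent potential takes on the bounded complementary components. By Lemma~\ref{2Dsole}, each $\bs\mu\in\Sigma(S)$ has the form $(\mathfrak{R}\grad\phi,0)$ for some $\phi\in\dot{BV}(\R^2)$ with $\grad\phi$ supported in $S$. Since $\R^2\setminus S$ is open and $\grad\phi$ vanishes there distributionally, $\phi$ is $\mathcal{L}_2$-a.e.\ constant on every connected component of $\R^2\setminus S$. Write $c_i$ for the constant on the bounded component $U_i$, $i=1,\dots,n$, and $p_j$ for the constant on the $j$-th unbounded component $V_j$. By Lemma~\ref{poin} there is a unique $p\in\R$ with $\phi-p\in L^2(\R^2)$; since $\mathcal{L}_2(V_j)=\infty$, this forces $p_j=p$ for every~$j$. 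Setting $\Psi(\bs\mu):=(c_1-p,\dots,c_n-p)$ defines a linear map $\Psi:\Sigma(S)\to\R^n$, so $\dim\operatorname{im}\Psi\leq n$.

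To establish $\dim\Sigma(S)\leq n$, I will show $\Psi$ is injective. Assume $\Psi(\bs\mu)=0$, so $\phi=p$ a.e.\ on $\R^2\setminus S$, and apply Theorem~\ref{rep} to $\bs\nu:=\mathfrak{R}\grad\phi$: this yields a loop decomposition by rectifiable Jordan curves $\Gamma_n^t\subset\supp\bs\nu\subset S$ with $\|\bs\nu\|_{TV}=\int_\R\sum_n\mathcal{H}^1(\Gamma_n^t)\,dt$. For any non-degenerate Jordan curve $\Gamma\subset S$, the Jordan curve theorem gives a non-empty bounded $\text{int}(\Gamma)$, and the empty-interior hypothesis on $S$ forces $\text{int}(\Gamma)\cap(\R^2\setminus S)\neq\emptyset$; as $\Gamma\subset S$ separates the plane, every component of $\R^2\setminus S$ that meets $\text{int}(\Gamma)$ lies entirely inside it, and unbounded components cannot fit, so at least one $U_i$ is enclosed. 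Using the identification from Theorem~\ref{rep}(ii)--(v) of the $\Gamma_n^t$ as outer/inner Jordan boundaries of measure-theoretic connected components of $\{\phi>t\}$ (as organized by Lemma~\ref{Jordan_decomposition}), together with the constraint $\phi=p$ on $\bigcup_i U_i$, a careful level-set analysis shows $\mathcal{H}^1(\Gamma_n^t)=0$ for a.e.~$(n,t)$, whence $\bs\nu=0$.

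For the equality $\dim\Sigma(S)=n$ when $\mathcal{H}^1(S)<\infty$, note that this hypothesis implies $\mathcal{L}_2(S)=0$, so $\phi$ is determined $\mathcal{L}_2$-a.e.\ by its values on $\R^2\setminus S$ and the injectivity of $\Psi$ becomes immediate. For surjectivity, each $\partial U_i\subset S$ satisfies $\mathcal{H}^1(\partial U_i)\leq\mathcal{H}^1(S)<\infty$, and a classical theorem (cf.\ Federer, \emph{Geometric Measure Theory} 4.5.11) yields that a bounded open set whose topological boundary has finite $\mathcal{H}^{n-1}$-measure has finite perimeter. Hence $\chi_{U_i}\in BV(\R^2)$ and $\bs\mu_i:=(\mathfrak{R}\grad\chi_{U_i},0)\in\Sigma(S)$ satisfies $\Psi(\bs\mu_i)=e_i$, so $\{\bs\mu_i\}_{i=1}^n$ forms a basis of $\Sigma(S)$.

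The main obstacle will be the injectivity of $\Psi$ in the first statement when $\mathcal{L}_2(S)$ may be positive: one must rule out a nontrivial $\phi\in\dot{BV}(\R^2)$ whose distributional gradient is a finite measure living on a positive-$\mathcal{L}_2$ portion of $S$ while $\phi$ is a.e.\ constant on $\R^2\setminus S$. The interplay of Theorem~\ref{rep} with the finiteness of $n$ and the empty-interior condition must be deployed here, extracting a combinatorial constraint from the fact that every non-degenerate loop in $S$ must enclose one of the finitely many $U_i$, in order to force the level-set Jordan curves produced by the decomposition to be degenerate.
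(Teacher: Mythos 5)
Your overall strategy is genuinely different from the paper's: you encode a silent magnetization by the vector of constants its potential $\phi$ assumes on the bounded components $U_1,\dots,U_n$ of $\R^2\times\{0\}\setminus S$, whereas the paper works with $S'$, the union of all rectifiable Jordan curves in $S$, shows $\Sigma(S)=\Sigma(S')$ via Theorem~\ref{rep}, and exhibits the explicit spanning set $\{\mathfrak{R}\grad\chi_{E_i}\}$ indexed by the bounded components $E_i$ of $\R^2\setminus S'$ (each loop $\mathbf{R}_{\bs\gamma}$, $\Gamma\subset S'$, being a finite sum of these). Your treatment of the case $\mathcal{H}^1(S)<\infty$ is complete and correct: there $\mathcal{L}_2(S)=0$ makes injectivity of $\Psi$ trivial, and surjectivity via $\chi_{U_i}$ (finite perimeter by the Federer criterion, $\partial U_i\subset S$) gives $\dim\Sigma(S)=n$ cleanly.

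The genuine gap is the injectivity of $\Psi$ in the first assertion, i.e.\ the claim that $\phi\equiv p$ a.e.\ on $\R^2\setminus S$ forces $\grad\phi=0$ when $\mathcal{L}_2(S)$ may be positive. You assert that ``a careful level-set analysis shows $\mathcal{H}^1(\Gamma_n^t)=0$ for a.e.\ $(n,t)$'' and then concede in your final paragraph that this is the main obstacle; no argument is actually supplied. This step is not routine: Example~\ref{cutout} of the paper exhibits a set of finite perimeter and positive $\mathcal{L}_2$-measure with empty interior, so a $\dot{BV}$ function can very well vanish on a dense open set while having a nonzero gradient carried by the complement. What rules this out here is precisely the hypothesis that $\R^2\setminus S$ has finitely many bounded components, and extracting the contradiction requires the full machinery you are trying to avoid spelling out: for $t>p$ one has $E_t\subset S$ mod-$\mathcal{L}_2$, hence $\partial_M E_t\subset S$ and each nondegenerate curve $\Gamma$ of the decomposition of Lemma~\ref{Jordan_decomposition} lies in $S$ and encloses some $U_i$; one must then use the nesting structure of the $\Gamma_k^+$, $\Gamma_j^-$ together with $U_i\cap E_t=\emptyset$ and the isoperimetric bound $\mathcal{H}^1(\Gamma)\geq 2\sqrt{\pi\,\mathcal{L}_2(U_i)}$ to force all curves to be degenerate. (In Example~\ref{cutout} this fails exactly because infinitely many bounded components are available to sit inside the excised holes.) Until that analysis is written out, the bound $\dim\Sigma(S)\leq n$ is not established; the paper sidesteps the difficulty by reducing every element of $\Sigma(S)$ to an integral of single loops $\mathbf{R}_{\bs\gamma}$ with $\Gamma\subset S$ and showing each such loop lies in the fixed finite-dimensional (hence weak-$*$ closed) span of the $\mathfrak{R}\grad\chi_{E_i}$.
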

\begin{proof}
	Let $S'\subset S$ be the union of all rectifiable Jordan curves contained in $S$ and let $m$ be the number of bounded connected components of $\R^2\setminus S'$.
	Since $(\R^2\setminus S) \cup (S\setminus S') = (\R^2\setminus S')$ and the set $S\setminus S'$ is a subset of the topological boundary of $\R^2\setminus S$, then  $n\geq m$.
	From Theorem  \ref{rep} it follows that $\Sigma(S)=\Sigma(S')$,	thus showing that $\dim\Sigma(S')=m$ will prove our theorem.
	
	Let $\{E_i\}_{i=1}^m$ be the family of bounded connected components of $\R^2\setminus S'$. 
	Note that each $E_i$ is of finite perimeter since $\mathcal{H}^1(S')$ is finite.
	Let $\bs{\ell}_i:=\mathfrak R\grad\chi_{E_i}$ for  $i=1,...m$. 
	By Lemma~\ref{2Dsole} each  $\bs{\ell}_i$ is $S'$-silent.   
	To show that $\{\bs{\ell}_i\}_{i=1}^m$ generates $\Sigma(S')$, it is sufficient by 
  Theorem \ref{rep} to prove that for any   rectifiable Jordan curve $\Gamma\subset S'$ with arclength parametrization $\bs \gamma$,  the magnetization $\mathbf{R}_{\bs \gamma}$ defined by \eqref{Rgamma} is in the span of the  $ \bs{\ell}_i$'s.

	Using the Jordan curve theorem we can see that for any $E_i$ such that int$(\Gamma)\cap E_i\neq\emptyset$ we  have that $E_i\subset$ int$(\Gamma)$.
	Hence there exists a $J\subset\{1,...,m\}$ such that $\bigcup_{i\in J}E_i\subset$ int$(\Gamma)\subset S'\cup\bigcup_{i\in J}E_i$ and since $\mathcal L_2(S')=0$, then
	\begin{align*}
	\mathbf{R}_{\bs \gamma}&= \mathfrak R\grad\chi_{\text{int}(\Gamma)} 
	= \mathfrak R\grad\chi_{\bigcup_{i\in J}E_i}\\
	&	=\sum_{i\in J}\mathfrak R\grad\chi_{E_i}
	=\sum_{i\in J}\bs{\ell}_i, 
	\end{align*}	
	where the first equality comes from  the remark after \eqref{dermeasd}, equation \eqref{dermeas} and Lemma \ref{Gamma}.
	
	To show linearly independence, assume that $\sum_{i=1}^mc_i\bs{\ell}_i=0$ where $c_i\in\R$, $i=1,..,m$.
	Since $0=\sum_{i=1}^mc_i\mathfrak R\grad\chi_{E_i}=\mathfrak R\grad\left(\sum_{i=1}^mc_i\chi_{E_i}\right)$, thus $\sum_{i=1}^mc_i\chi_{E_i}$ is a constant but since the $E_i$'s are bounded and disjoint then each $c_i=0$ and hence the $\bs\ell_i$'s are indeed linearly independent.
	%
\end{proof}

\subsection{Regularization by penalizing the total variation}
\label{MtoFop}

Let $S\subset \R^2\times\{0\}$  and $Q\subset\R^3$ be closed and positively separated.
For $\bs{\mu}\in \mathcal{M}(S)^3$ and
$v$ a unit vector in $\R^3$,  the component  of the magnetic field
$\bb{b}(\bs{\mu})$ in the direction $v$ at $x\not \in S$ 
is given,  in view of  \eqref{bDef1}, by
\begin{equation}\label{b3K}
b_v(\bs{\mu})(x):=v\cdot\bb{b}(\bs{\mu})(x)  =-\frac{\mu_0}{4\pi}\int \bb K_v(x-y)\cdot \, d\bs{\mu}(y),
\end{equation}
where 
\begin{equation}
\label{noyauK}
\bb K_v(x)=\frac{v}{|x|^3}  -3  x\frac{v\cdot x}{|x|^5}=  \grad \left( \frac{v\cdot x}{|x|^3}\right).
\end{equation}
Consider  a finite, positive Borel measure  $\rho$
with support contained in $Q$  and let $A:
\mathcal{M}(S)^3\to L^2(Q,\rho)$ be the so-called {\it forward operator}
defined by
\begin{equation}\label{Adef}
A(\bs{\mu})(x):= b_v(\bs{\mu})(x), \qquad x\in Q.
\end{equation}  
 The adjoint operator   $A^*$ is then given by (see \cite[Section 3]{BVHNS})
\begin{equation}
\label{b3*Wbis}
A^*(\Psi)(x):=-\mu_0 \grad(\grad U^{\rho,\psi}\cdot v)(x),\qquad
U^{\rho,\psi}(x)=-\frac{1}{4\pi}\int \frac{\Psi(y)}{|x-y|}d\rho(y).
\end{equation}

Since $Q$ and $S$ are positively separated it follows from the harmonicity of $K_v$ that $A^*(\Psi)\in C_0(S)^3$ and thus $A^*:(L^2(Q,\rho))^*\sim L^2(Q,\rho)\to C_0(S)^3\subset(\mathcal{M}(S)^3)^*$. 
Note the kernel of the forward operator $A$ contains all $S$-silent magnetizations.
In the case this kernel consists exactly of $S$-silent magnetizations, we say that $A$ is $S${\it-sufficient}.
It follows from \cite[Lemmma~2.3]{BVHNS} and the discussion thereafter that $A$ is $S$-sufficient when $S\subset \R^2\times\{0\}$ and $Q\subset\R^3$ are positively separated closed sets and for some complete real analytic surface $\mathcal{A} \subset \R^3\setminus S$ we have:
	\begin{enumerate}
		\item $S$ and $\mathcal{A}$ are positively separated;
		\item $S$ lies entirely within one connected component of $\R^3\setminus \mathcal{A}$;
		\item $Q\cap \mathcal{A}$  has Hausdorff dimension strictly greater than 1 in  each connected component of $\R^3\setminus S$;
		\item $\supp{\rho}=Q$. 
	\end{enumerate}
For $\bs{\mu}\in\mathcal{M}(S)^3$, $f\in L^2(Q,\rho)$, and $\lambda>0$, 
recall from 
\eqref{defcrit0} the definition of $\mathcal{F}_{f,\lambda}$,
and from \eqref{crit0} the notation  $\bs{\mu_\lambda}\in \mathcal{M}(S)^3$ to designate
a minimizer of  $\mathcal{F}_{f,\lambda}$.
As a second application of our results in Section \ref{loppdecSM}, we prove:

\begin{theorem} 
\label{uniqueness}
{Let $S$ be a closed subset of $\R^2\times\{0\}$,
	$Q\subset\R^3$ be a closed set and $\rho\in\mathcal{M}(Q)$ be such that the forward operator $A$ defined in \eqref{Adef} is $S$-sufficient. 
	For $f\in L^2(Q,\rho)$ and $\lambda>0$, the solution to \eqref{crit0} is unique.}
\end{theorem}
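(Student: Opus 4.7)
The plan is to assume $\bs{\mu}_1$ and $\bs{\mu}_2$ both minimize $\mathcal{F}_{f,\lambda}$ and deduce that $\bs{\nu}:=\bs{\mu}_2-\bs{\mu}_1$ vanishes. Since $(\bs{\mu}_1+\bs{\mu}_2)/2$ is also a minimizer, the parallelogram identity for the quadratic residual together with the convexity of $\|\cdot\|_{TV}$ forces $A\bs{\mu}_1=A\bs{\mu}_2$ and $\|\bs{\mu}_1\|_{TV}=\|\bs{\mu}_2\|_{TV}$. By $S$-sufficiency $\bs{\nu}$ is then $S$-silent, hence divergence-free (because $S$ is slender), and Lemma~\ref{2Dsole} gives $\nu_3=0$ with $\bs{\nu}_T$ a planar divergence-free measure. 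The subdifferential characterization of the minimum produces a common \emph{dual certificate}
\[
\bs{\psi}_\lambda := -\tfrac{2}{\lambda}A^*(A\bs{\mu}_1-f) = \tfrac{2\mu_0}{\lambda}\grad h \in C_0(S)^3,
\]
where by \eqref{b3*Wbis} the scalar function $h$ is harmonic, hence real analytic, on $\R^3\setminus Q$. By construction $|\bs{\psi}_\lambda|\leq 1$ on $S$ and $\bs{\psi}_\lambda=\bb{u}_{\bs{\mu}_i}$ holds $|\bs{\mu}_i|$-a.e.\ for $i=1,2$. Setting $\sigma:=|\bs{\mu}_1|+|\bs{\mu}_2|$ and $\alpha_i:=d|\bs{\mu}_i|/d\sigma$, this reads $\bs{\mu}_i=\alpha_i\bs{\psi}_\lambda\sigma$, so that $\bs{\nu}=(\alpha_2-\alpha_1)\bs{\psi}_\lambda\sigma$ and $\bb{u}_{\bs{\nu}}=\operatorname{sgn}(\alpha_2-\alpha_1)\bs{\psi}_\lambda$ $|\bs{\nu}|$-a.e. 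Combining $\nu_3=0$ with the continuity of $\bs{\psi}_\lambda$ upgrades the a.e.\ identities $|\bs{\psi}_\lambda|=1$ and $\psi_{\lambda,3}=0$ to \emph{pointwise} identities on the closed set $\supp\bs{\nu}$.

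Apply Theorem~\ref{rep} to $\bs{\nu}_T$ to obtain, for a.e.\ $t$, loops $\Gamma_n^t\subset\supp\bs{\nu}$ on which $\bb{u}_{\bs{\nu}_T}=\bs{\tau}_n^t$ $\mathcal{H}^1$-a.e. Since $\Gamma_n^t\subset S$ is positively separated from $Q$, the planar vector field $N:=\bs{\psi}_{\lambda,T}=\grad_{2D}\tilde h$, with $\tilde h(y):=h(y,0)$, is real analytic and of unit norm on a compact planar neighborhood of $\Gamma_n^t$, hence non-vanishing and Lipschitz there. On a unit-speed Lipschitz parametrization $\bs{\gamma}:[0,L]\to\Gamma_n^t$, the identity $\bb{u}_{\bs{\nu}_T}=\bs{\tau}_n^t$ combined with $\bb{u}_{\bs{\nu}_T}=\operatorname{sgn}(\alpha_2-\alpha_1)\bs{\psi}_{\lambda,T}$ yields $\bs{\gamma}'(s)=\epsilon(s)N(\bs{\gamma}(s))$ for a.e.\ $s$, where $\epsilon(s):=\operatorname{sgn}(\alpha_2-\alpha_1)(\bs{\gamma}(s))\in\{\pm 1\}$. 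Letting $\bs{y}$ be the integral curve of $N$ with $\bs{y}(0)=\bs{\gamma}(0)$ and $t(s):=\int_0^s\epsilon(u)\,du$, a Gronwall estimate applied to $|\bs{\gamma}(s)-\bs{y}(t(s))|^2$, together with a compactness argument ensuring that $\bs{y}\circ t$ remains in a fixed neighborhood of $\Gamma_n^t$, gives $\bs{\gamma}(s)=\bs{y}(t(s))$ for every $s\in[0,L]$.

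The contradiction now goes as follows. If $\epsilon$ is not a.e.\ constant, then $t$ is Lipschitz with $|t'|=1$ a.e.\ and non-monotone on $[0,L]$; the intermediate value theorem supplies $s_1<s_2$ with at least one in $(0,L)$ and $t(s_1)=t(s_2)$, whence $\bs{\gamma}(s_1)=\bs{y}(t(s_1))=\bs{y}(t(s_2))=\bs{\gamma}(s_2)$, contradicting the Jordan injectivity of $\bs{\gamma}$ on $[0,L)$. Hence $\epsilon\equiv\pm 1$ a.e., and the fundamental theorem of calculus along $\bs{\gamma}$ combined with $|N|=1$ on $\Gamma_n^t$ yields
\[
0=\tilde h(\bs{\gamma}(L))-\tilde h(\bs{\gamma}(0))=\int_0^L\epsilon(s)|N(\bs{\gamma}(s))|^2\,ds=\pm L,
\]
so $L=0$ and every loop in the decomposition is degenerate. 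Invoking \eqref{eqrepabs} gives $|\bs{\nu}|=0$, hence $\bs{\mu}_1=\bs{\mu}_2$. The hard part will be the Gronwall/ODE step establishing $\bs{\gamma}=\bs{y}\circ t$ in spite of $\bs{\gamma}$ being only Lipschitz and $\epsilon$ only measurable, together with the control on the domain of $\bs{y}$ so the composition makes sense on all of $[0,L]$; this is what converts the purely measure-theoretic tangent information supplied by Theorem~\ref{rep} into a topological obstruction forbidding nontrivial loops.
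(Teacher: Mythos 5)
Your argument is correct in outline and reaches the same final contradiction as the paper (a unit-length gradient field cannot carry a nontrivial closed orbit), but the core technical step is genuinely different. The paper first invokes \cite[Corollary~4.2]{BVHNS} to conclude that the supports of the two minimizers lie in finitely many points and analytic arcs; the loops produced by Theorem~\ref{rep} are then piecewise analytic, so on each analytic sub-arc the sign ambiguity $\bs{\tau}_n=\pm A^*\psi$ is resolved by real analyticity, each arc is contained in a trajectory of $\dot x=A^*\psi$, and non-intersection of trajectories makes the whole loop a single periodic orbit of a gradient field, which is impossible. You instead work directly with the merely rectifiable loops of Theorem~\ref{rep} and resolve the sign ambiguity dynamically: Carath\'eodory uniqueness (Gronwall) for $\dot w=\epsilon(s)N(w)$ identifies $\bs{\gamma}$ with a reparametrized integral curve $\bs{y}\circ t$, and a non-constant $\epsilon$ would force $t$ to be non-monotone, hence non-injective on $(0,L)$, violating the Jordan property of $\bs{\gamma}$ on $[0,L)$; the potential $\tilde h$ then kills constant-sign loops. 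What your route buys is independence from the external structural result on supports of minimizers (and from piecewise analyticity of the loops); what it costs is the continuation/Gronwall bookkeeping you flag as the hard part, which is standard but must be done carefully since $N$ is only known to be Lipschitz and non-vanishing on a neighborhood of the compact loop and the integral curve must be shown not to escape that neighborhood before time $t(s)$ — your observation that $\bs{y}(t(s))$ coincides with $\bs{\gamma}(s)\in\Gamma_n^t$ wherever the identity has been propagated closes this. Two small points to fix in the write-up: $|\bs{\psi}_\lambda|=1$ is only inherited pointwise on $\supp\bs{\nu}$ (not on a planar neighborhood of $\Gamma_n^t$ as stated), but you only need $|N|=1$ on $\Gamma_n^t$ and $N$ non-vanishing Lipschitz nearby, which you do have; and the passage from the $|\bs{\nu}|$-a.e.\ identity $\bb{u}_{\bs{\nu}}=\pm\bs{\psi}_\lambda$ to an $\mathcal{H}^1$-a.e.\ identity on $\Gamma_n^t$ for a.e.\ $t$ should be justified via \eqref{eqrepint2} applied to the characteristic function of the exceptional set.
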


\begin{proof}
It is well known (see {\it e.g.} \cite[Propostion~3.6]{BrePikk}) that 
$\bs{\mu}_\lambda\in \mathcal{M}(S)^3$ is 
	a minimizer of $\mathcal{F}_{f,\lambda}$ if and only if:
	\begin{equation}
	\label{CP}
	\begin{array}{ll}
	A^*(f-A\bs{\mu}_\lambda)&=\frac{\lambda}{2} \bb{u}_{\bs{\mu_\lambda}} 
	\qquad |\bs{\mu}_\lambda|\text{\rm -a.e. and}\\
	\left |A^*(f-A\bs{\mu}_\lambda)\right|&\leq \frac{\lambda}{2}\quad\text{\rm everywhere on } S.
	\end{array}
	\end{equation}
	Moreover, it follows from the strict convexity of the $L^2$-norm 
that $\bs{\mu}'_\lambda\in\mathcal{M}(S)^3$ is another solution 
	if and only if $A(\bs{\mu}'_\lambda-\bs{\mu}_\lambda)=0$.

Assume for a contradiction that $\bs{\mu}_\lambda$ and $\bs{\mu}'_\lambda$ 	are two distinct minimizers in \eqref{crit0} and let $\bs{\mu}:=\bs{\mu}'_\lambda-\bs{\mu}_\lambda$. 
	As $\bs\mu'_\lambda-\bs{\mu}_\lambda=\bs{\mu}$ is absolutely continuous with respect to $|\bs{\mu}|$, the Lebesgue  decompositions of $\bs\mu_\lambda$ and $\bs\mu_\lambda'$ with respect to $|\bs{\mu}|$ must have the same singular term.
That is, these decompositions are necessarily of the form
	\begin{equation*}
	\label{RDderc}
	d\bs{\mu}_\lambda=\boldsymbol{\gamma} d|\bs{\mu}|+d\bs{\nu},\qquad
	d\bs{\mu}'_\lambda=\boldsymbol{\gamma}' d|\bs{\mu}|+d\bs{\nu},
	\end{equation*}
	where $|\bs{\nu}|$ is singular with respect to $|\bs{\mu}|$ and $\boldsymbol{\gamma}$, $\boldsymbol{\gamma}'$ are $|\bs{\mu}|$-integrable $\R^3$-valued functions.
	
	Put for simplicity $\psi=(2/\lambda)(f-A(\bs{\mu}_\lambda))=
	(2/\lambda)(f-A(\bs{\mu}'_\lambda))$.
	Thanks to \eqref{CP} we know that $\bb u_{\bs\mu_\lambda}= A^*\psi$ and $\bb u_{\bs\mu_\lambda'}=A^*\psi$, $\bs{\mu}_\lambda$ and $\bs{\mu}_\lambda'$-a.e. respectively.
	Now, since $d|\bs{\mu}_\lambda|=|\boldsymbol{\gamma}| d|\bs{\mu}| + d|\bs{\nu}|$ and $d|\bs{\mu}'_\lambda|=|\boldsymbol{\gamma}'| d|\bs{\mu}| + d|\bs{\nu}|$, we have that
	\begin{align*}
	\bb u_{\bs{\mu}} d|\bs{\mu}|= d\bs{\mu} = \bb u_{\bs\mu_\lambda'}d|\bs{\mu}'_\lambda|-\bb u_{\bs\mu_\lambda}d|\bs{\mu}_\lambda| = A^*\psi d|\bs{\mu}'_\lambda| - A^*\psi d|\bs{\mu}_\lambda| = A^*\psi (|\boldsymbol{\gamma}'|-|\boldsymbol{\gamma}|) d|\bs{\mu}|.
	\end{align*}
	Therefore $\bb u_{\bs{\mu}} = A^*\psi (|\boldsymbol{\gamma}'|-|\boldsymbol{\gamma}|)$ at $|\bs\mu|$-a.e point, and since $|A^*\psi|=1$ on the supports of $\bs\mu_\lambda$ and $\bs\mu_\lambda'$ it holds that  $\bb u_{\bs{\mu}}(x) = \pm_x A^*\psi(x)$ for $|\bs\mu|$-a.e. $x$, where the choice of sign $\pm_x$ 
has a subscript $x$ to indicate that it may vary with
$x$.
	
	From the $S$-sufficiency of $A$ we know that $\bs{\mu}$ is $S$-silent. 
	Also, by 
	\cite[Corollary~4.2]{BVHNS} (take $\mathcal{B}=\R^2\times \{0\}$ there), the supports of	$\bs{\mu}_\lambda$ and $\bs{\mu}'_\lambda$ are contained in a finite collection of points and analytic arcs. In particular, there are only finitely many rectifiable Jordan curves contained in the support of $\bs{\mu}$ and they are all piecewise analytic.
	Thus, applying Theorem \ref{rep} to $\bs{\mu}$, we find there are finitely many piecewise analytic oriented Jordan curves $\Gamma_1,\cdots,\Gamma_N$ 
	with respective unit tangent vector fields 	$\boldsymbol{\tau}_1,\cdots,\boldsymbol{\tau}_n$, and strictly positive real numbers $a_1,\cdots,a_N$ such that $\bs{\tau}_m=\bs\tau_n$ on $\Gamma_m\cap \Gamma_n$, $\mathcal{H}^1$-a.e. and
	$$
	d\bs{\mu}=\sum_{n=1}^N a_n\bs{\tau}_n d\left(\mathcal{H}^1\mathcal{b}	\Gamma_n\right).
	$$
	In particular, $d|\bs{\mu}|=\sum_{n=1}^N a_n d\left(\mathcal{H}^1\mathcal{b}	\Gamma_n\right)$ and $\bs{\tau}_n (x)= \bb u_{\bs{\mu}}(x) = \pm_x A^*\psi(x)$, for $|\bs{\mu}|$-a.e. $x$, hence $\mathcal{H}^1$-a.e., on $\Gamma_n$.
	
	Fix $n$ and let $E$ be an analytic sub-arc of $\Gamma_n$.
		Being the unit tangent to an oriented analytic arc, 
$\bs{\tau}_n(x)$ must be an analytic function of $x\in E$, and so is
$A^*\psi(x)$ by the real analyticity of $A^*\psi$, {\it cf.} \eqref{b3*Wbis}.
Hence, either $\bs{\tau}_n=A^*\psi$ or $\bs{\tau}_n=-A^*\psi$ everywhere on $E$.
	Therefore, $E$ is a subset of a trajectory of the autonomous differential equation $\dot{x}=A^*\psi(x)$. 
	Moreover, since $E$ is bounded and percursed at unit speed, the corresponding trajectory extends beyond the endpoints of $E$, and since two distinct trajectories cannot intersect we conclude that $\Gamma_n$ is smooth and constitutes a single, periodic trajectory. 
	This, however, is impossible because $A^*\psi$ is a gradient vector field, by \eqref{b3*Wbis}.
\end{proof}





When $S$ is planar and EP-1 has a unique solution, Theorem~4.3 from \cite{BVHNS} and Theorem~\ref{uniqueness} together imply the following corollary.

\begin{corollary} 
	\label{constree}
	Let $S\subset \R^2\times \{0\}$ be closed, the forward operator $A$ be $S$-sufficient, and $\bs{\mu}_0\in\mathcal{M}(S)^3$.  Set $f=A\bs \mu_0$ and, for $e\in L^2(Q,\rho)$, set $f_e:=f+e$.
	For $\lambda>0$, there is a unique  minimizer  $\bs{\mu}_{\lambda,e}$ of
	\eqref{defcrit0} where $f$ gets replaced by $f_e$.
	
	If $\|\bs{\mu}\|_{TV}>\|\bs{\mu}_0\|_{TV}$ for any magnetization $\bs{\mu}$ that is $S$-equivalent to $\bs{\mu}_0$, then  $\bs{\mu}_{\lambda,e}$ (resp. $|\bs{\mu}_{\lambda,e}|$)
	converges to $\bs{\mu}_0$ (resp. $|\bs{\mu}_0|$) in the narrow sense as $\lambda\to0$ and $\|e\|_{L^2(Q)}/\sqrt{\lambda}\to0$.
\end{corollary}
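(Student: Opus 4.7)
The plan is to observe that Corollary~\ref{constree} is essentially a bookkeeping consequence of Theorem~\ref{uniqueness} (just established) and of \cite[Theorem~4.3]{BVHNS} (cited earlier as providing narrow convergence of both the minimizers and their total variation measures in the $TV$-regularization scheme). Nothing new from Sections~\ref{loppdecSM}--\ref{AIP} needs to enter again, because the loop decomposition was already used to prove Theorem~\ref{uniqueness}. So the proof will consist of two steps: (i) deduce uniqueness of $\bs{\mu}_{\lambda,e}$, (ii) deduce narrow convergence as $\lambda\to0$, $\|e\|_{L^2(Q,\rho)}/\sqrt{\lambda}\to0$.

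For step (i), I would simply apply Theorem~\ref{uniqueness} with the datum $f_e=f+e\in L^2(Q,\rho)$ in place of $f$. The hypotheses on $S$, $Q$, $\rho$ and the $S$-sufficiency of $A$ are the same as in the corollary, so Theorem~\ref{uniqueness} yields directly that $\mathcal{F}_{f_e,\lambda}$ admits a unique minimizer $\bs{\mu}_{\lambda,e}$. Existence of at least one minimizer was already recorded via weak-$*$ compactness of bounded sets in $\mathcal{M}(S)^3$ in the discussion following Extremal Problem~2.

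For step (ii), I would first reformulate the extra hypothesis: the strict inequality $\|\bs{\mu}\|_{TV}>\|\bs{\mu}_0\|_{TV}$ for every $\bs{\mu}\ne\bs{\mu}_0$ that is $S$-equivalent to $\bs{\mu}_0$ is by definition the statement that $\bs{\mu}_0$ is the unique solution to Extremal Problem~1 with datum $\bs{\mu}_0$, i.e.\ $\bs{\mu}_0$ is strictly $TV$-minimal on $S$. In this setting, \cite[Theorem~4.3]{BVHNS} asserts that along any choice of parameters with $\lambda_n\to0$ and $\|e_n\|_{L^2(Q,\rho)}/\sqrt{\lambda_n}\to0$, any sequence of minimizers of $\mathcal{F}_{f_{e_n},\lambda_n}$ has a subsequence converging in the narrow sense to a $TV$-minimal magnetization $S$-equivalent to $\bs{\mu}_0$, together with narrow convergence of the associated total variation measures. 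Strict $TV$-minimality forces the limit to be $\bs{\mu}_0$ itself, and the uniqueness of the subsequential limit promotes subsequential convergence to full convergence. By step (i), for each $(\lambda,e)$ the minimizer $\bs{\mu}_{\lambda,e}$ is the \emph{unique} element of $\mathcal{M}(S)^3$ to which the cited theorem could be applied, so we obtain narrow convergence of $\bs{\mu}_{\lambda,e}\to\bs{\mu}_0$ and $|\bs{\mu}_{\lambda,e}|\to|\bs{\mu}_0|$ along arbitrary paths realising the parameter regime.

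No step here is an obstacle in the technical sense: the hard work was done in Theorem~\ref{uniqueness} (for step~(i)) and in \cite[Theorem~4.3]{BVHNS} (for step~(ii)). The only substantive point is the translation of the corollary's hypothesis into strict $TV$-minimality, and the standard argument promoting subsequential weak-$*$/narrow convergence to full convergence via uniqueness of the limit. Both are essentially formal, so the proof should be just a short paragraph chaining these observations together.
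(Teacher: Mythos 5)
Your proposal is correct and follows exactly the route the paper intends: the paper itself introduces the corollary by saying that Theorem~4.3 of \cite{BVHNS} and Theorem~\ref{uniqueness} together imply it, and your two steps (applying Theorem~\ref{uniqueness} with datum $f_e$ for uniqueness, then translating the hypothesis into strict $TV$-minimality and invoking the cited convergence theorem, with the standard subsequence argument) are precisely that chain of reasoning spelled out.
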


Theorems~\ref{Thm6.2} and~\ref{Thm6.3}, Corollary~\ref{Cor5.4}, and  Lemma~\ref{lem:treelike} give sufficient conditions for the uniqueness of solutions to EP-1.   Hence, if $\bs{\mu}_0\in\mathcal{M}(S)^3$ is carried by a set $Z\subset S\subset \R^2\times \{0\}$, then we may apply the above corollary under the following conditions:

\begin{itemize}
 \item[(a)] $\mathcal{H}^1(\Gamma \cap Z) < \mathcal{H}^1(\Gamma\setminus Z)$  for any rectifiable Jordan curve $\Gamma\subset S$, or
\item[(b)]   $ Z\subset W\cup \bigcup_{k\in K}L_k$ where $W\subset S$ is  purely 1-unrectifiable   and  the $L_k$ are disjoint line segments such that the distance from any $L_k$ to any $L_j$, $j\ne k$, is greater than  the length of $L_k$, or 
\item[(c)] $S$ is tree-like.
\end{itemize}
In particular, it follows from condition (b) that Corollary~\ref{constree} applies when $\bs{\mu}_0$ is carried by a countable collection of points and sufficiently separated line segments. 

We conclude with an example.
\begin{exa}
Let $v_0=v_4=(0,0)$, $v_1=(1,0)$, $v_2=(1,1)$, and $v_3=(0,1)$ denote the vertices of the unit square $[0,1]^2$ and let $\bs{\gamma}_i$ denote the arclength parametrization of the directed line segment from $v_i$ to $v_{i+1}$ for $i=0,1,2,3$.  Let $\bs{\mu}_0={\bf R}_{\bs{\gamma}_0}+{\bf R}_{\bs{\gamma}_2}$ and $\bs{\mu}_1=-{\bf R}_{\bs{\gamma}_1}-{\bf R}_{\bs{\gamma}_3}$ and let  $S$ be any closed set that contains the unit square (e.g. $S=\R^2$).  By Corollary~\ref{Cor5.4}  both $\bs{\mu}_0$ and $\bs{\mu}_1$ are $TV$-minimal on $S$. However,  $\bs{\mu}_0$ and $\bs{\mu}_1$ are not strictly $TV$-minimal since $\bs{\mu}_0-\bs{\mu}_1$ is the loop around $[0,1]^2$, showing that  $\bs{\mu}_0$ and $\bs{\mu}_1$ are $S$-equivalent.  
Clearly, any convex combination  $(1-\alpha)\bs{\mu}_0+\alpha \bs{\mu}_1$, $\alpha\in [0,1]$, is also $S$-equivalent to $\bs{\mu}_0$ and $TV$-minimal on $S$.   
In fact, any $TV$-minimal magnetization is of this form.
	Indeed, taking $\bs{\mu}=\bs{\mu}_0$ and $Z=\supp\bs{\mu}_0$, in \eqref{EP1Cond}, the only $\Gamma$ that makes this inequality an equality is the boundary of $[0,1]$. 
	Hence, by Theorem \ref{Thm6.3}, any $TV$-minimal magnetization is of the form $\bs{\mu}_0+s(\bs{\mu}_1-\bs{\mu}_0)$ for some $s\in\R$.
	Then minimality of the total variation forces $0\leq s\leq 1$.
	
	If we take $Q=[0,1]^2\times\{1\}$ and $\rho=\mathcal{L}_2\mathcal{b}Q$ then the forward operator $A$ is $S$-sufficient. 
	With the notation of Corollary \ref{constree}, we get
since $\mathfrak{R}\bs{\mu}_0=\bs{\mu}_1$ that if $e=\mathfrak{R}e$ then $\mathfrak{R}f_e=f_e$.
	In this case, we get from Theorem \ref{uniqueness}
that $\mathfrak{R}\bs{\mu}_{\lambda,e}=\bs{\mu}_{\lambda,e}$
for every $\lambda>0$.
	Now, we know that any weak-$*$ limit of minimizers of EP-2 is $TV$-minimal, provided that both $\lambda$ and $\|e\lambda^{-1/2}\|_{L^2(Q,\rho)}$ 
	tend to $0$ (see \cite[Theorems~2\&5]{BurOsh}).
	Because the limit should also be invariant under $\mathfrak{R}$, it must be equal to $(\bs\mu_0+\bs\mu_1)/2$.
	In particular, we get global weak-$*$ convergence 
of $\bs{\mu}_{\lambda,e}$ and $|\bs{\mu}_{\lambda,e}|$ for this example, as long as the noise $e$ has the same symmetry as the data.
\end{exa}

\appendix
\section{}
In this appendix we gather several technical results (particularly Lemma~\ref{foncset}) concerning the Smirnov decomposition that are needed in Section~\ref{sparse3DSec}.
\label{sec:appendix}
\begin{lemma}
\label{RNcurve}
Let $\bs{\gamma}:[a,b]\to\R^n$ be a parametrized rectifiable curve, 
$\bs{\Gamma}=\bs{\gamma}([a,b])$
its image and 
$\bb{R}_{\bs{\gamma}}$ the $\R^n$-valued measure defined by \eqref{Rgamma}. 
Then, $\bb{R}_{\bs\gamma}$ is absolutely continuous with respect to 
$\mathcal{H}^1\mathcal{b}\bs{\Gamma}$, and its Radon-Nikodym derivative 
is given by
\[d\mathbf{R}_{\bs{\gamma}}/d(\mathcal{H}^1\mathcal{b}\Gamma)(x)=\sum_{t\in\bs{\gamma}^{-1}(x)} \bs{\gamma}^\prime(t),\qquad \mathcal{H}^1\text{-a.e.  }x\in\bs{\Gamma}.\]
\end{lemma}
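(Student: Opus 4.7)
My plan is to read off the conclusion directly from the second equality in \eqref{Rgamma}, which already supplies the area-formula identity
\[
\langle \mathbf{R}_{\bs{\gamma}}, \mathbf{g}\rangle = \int_\Gamma \Big( \sum_{t \in \bs{\gamma}^{-1}(x)} \mathbf{g}(x) \cdot \bs{\gamma}'(t) \Big) \, d\mathcal{H}^1(x), \qquad \mathbf{g} \in C_0(\R^n)^n.
\]
Since $\mathbf{g}(x)$ is independent of $t$, I would pull it out of the inner sum to rewrite this as $\langle \mathbf{R}_{\bs{\gamma}}, \mathbf{g}\rangle = \int_\Gamma \mathbf{g}(x) \cdot \bs{h}(x) \, d\mathcal{H}^1(x)$, where $\bs{h}(x) := \sum_{t \in \bs{\gamma}^{-1}(x)} \bs{\gamma}'(t)$ is the candidate Radon--Nikodym derivative.

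To legitimize this, I would first verify that $\bs{h}$ is well-defined and $\mathcal{H}^1$-integrable on $\Gamma$. By \eqref{areas}, $\int_\Gamma N(\bs\gamma, x)\, d\mathcal{H}^1(x) = \ell(\bs\gamma) = b-a < \infty$, so $N(\bs\gamma, x) < \infty$ for $\mathcal{H}^1$-a.e. $x \in \Gamma$; as $\bs{\gamma}$ is a unit speed parametrization, $|\bs\gamma'(t)| = 1$ on a set $F \subset [a,b]$ of full $\mathcal{L}_1$-measure, and the Lipschitz image $\bs\gamma([a,b] \setminus F)$ has $\mathcal{H}^1$-measure zero. Summing only over $\bs{\gamma}^{-1}(x) \cap F$ therefore produces an absolutely convergent finite sum for $\mathcal{H}^1$-a.e. $x \in \Gamma$, bounded by $N(\bs\gamma, x)$, hence $\mathcal{H}^1 \mathcal{b} \Gamma$-integrable. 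Measurability of each component of $\bs{h}$ follows from applying the area formula componentwise to the scalar integrands $t \mapsto (\bs\gamma_j')^{\pm}(t)$.

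To finish, I would observe that the identity $\langle \mathbf{R}_{\bs\gamma}, \mathbf{g}\rangle = \int \mathbf{g} \cdot \bs{h}\, d(\mathcal{H}^1 \mathcal{b} \Gamma)$ for all $\mathbf{g} \in C_0(\R^n)^n$ forces $\mathbf{R}_{\bs\gamma} = \bs{h}\cdot (\mathcal{H}^1\mathcal{b}\Gamma)$ by uniqueness in the Riesz representation theorem; this yields both $\mathbf{R}_{\bs\gamma} \ll \mathcal{H}^1 \mathcal{b} \Gamma$ and $d\mathbf{R}_{\bs\gamma}/d(\mathcal{H}^1 \mathcal{b} \Gamma) = \bs{h}$, as claimed.

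There is no serious obstacle here, since the hard analytical content---the change of variables giving \eqref{Rgamma}---has already been invoked in the statement of that equation. The main care needed is to ensure that the formal pointwise sum $\bs{h}$ is a bona fide $\mathcal{H}^1$-measurable, integrable function rather than a mere formal expression; this is handled by the same area formula used in \eqref{Rgamma}, applied to coordinates and split into positive and negative parts.
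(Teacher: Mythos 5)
Your proof is correct, and it takes a genuinely shorter route than the paper's. The paper proceeds in two stages: it first proves absolute continuity of $|\mathbf{R}_{\bs{\gamma}}|$ with respect to $\mathcal{H}^1\mathcal{b}\Gamma$ by an outer-regularity argument (bounding $|\mathbf{R}_{\bs{\gamma}}|(V)$ for open $V$ by $\int_{\Gamma\cap V}N(\bs{\gamma},x)\,d\mathcal{H}^1$ and shrinking open neighborhoods of an $\mathcal{H}^1$-null set), and only then identifies the density by testing \eqref{Rgamma} against functions $f_k v$ obtained from Lusin's theorem and the Tietze extension theorem with $f_k\to\chi_B$, passing to the limit by dominated convergence on both sides. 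You instead exhibit the candidate density $\bs{h}$ as an honest element of $L^1[d(\mathcal{H}^1\mathcal{b}\Gamma)]^n$ and conclude in one stroke from uniqueness in the (vector-valued) Riesz representation theorem that $\mathbf{R}_{\bs{\gamma}}=\bs{h}\,(\mathcal{H}^1\mathcal{b}\Gamma)$, from which both conclusions drop out. The extra input your route uses is precisely that a finite Borel vector measure on $\R^n$ is determined by its action on $C_0(\R^n)^n$; this needs regularity of both measures, which is automatic for finite Borel measures on $\R^n$ and is in fact the very fact the paper checks by hand for $\mathcal{H}^1\mathcal{b}\Gamma$ (finiteness plus $\sigma$-compactness of open subsets of $\Gamma$). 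It would be worth one sentence in your write-up to record that regularity explicitly, since it is what licenses the uniqueness step. Your care in checking that the formal sum $\bs{h}(x)=\sum_{t\in\bs{\gamma}^{-1}(x)}\bs{\gamma}'(t)$ is well defined $\mathcal{H}^1$-a.e.\ --- finiteness of $N(\bs{\gamma},\cdot)$ a.e.\ via \eqref{areas}, and the fact that the Lipschitz image of the $\mathcal{L}_1$-null set where $\bs{\gamma}'$ fails to exist is $\mathcal{H}^1$-null --- is a detail the paper leaves implicit, and is a genuine improvement in completeness. Both arguments ultimately rest on the same area-formula identity built into \eqref{Rgamma}.
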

\begin{proof}
As  $|\mathbf{R}_{\bs{\gamma}}|$ is regular 
(being a finite Borel measure on $\R^n$), for any open set $V\subset \R^n$ we 
have that 
\begin{equation}
\label{initac}
|\mathbf{R}_{\bs{\gamma}}|(V)=\sup\{|\langle \mathbf{R}_{\bs{\gamma}},\bs{\varphi}\rangle|,\, \bs{\varphi}\in C_c(V,\R^n),\,|\bs{\varphi}|\leq1\}
\leq\int_{\Gamma\cap V}N(\bs{\gamma},x)\,d\mathcal{H}^1(x).
\end{equation}
Now, $\mathcal{H}^1\mathcal{b}\Gamma$ is also regular, 
since  it is finite and
every open set in $\Gamma$ is $\sigma$-compact, see
\cite[Theorem~2.18]{Rudinrca}. In particular, if
$B\subset\R^n$ is a Borel set such that $\mathcal{H}^1(B\cap\Gamma)=0$, then
there is a decreasing 
sequence $V_k$ of open sets in $\R^n$ with
$V_k\supset B\cap\Gamma$ and $\mathcal{H}^1(\cap_k V_k\cap\Gamma)=0$.
Hence, we obtain from  \eqref{initac}, \eqref{areas} and the dominated convergence theorem  that
\[
|\mathbf{R}_{\bs{\gamma}}|(B)=|\mathbf{R}_{\bs{\gamma}}|(B\cap\Gamma)
\leq \liminf_k |\mathbf{R}_{\bs{\gamma}}|(V_k)\leq
\lim_k\int_{\Gamma\cap V_k}N(\bs{\gamma},x)\,d\mathcal{H}^1(x)=0.
\]
Thus, $|\mathbf{R}_{\bs{\gamma}}|$ and {\it a fortiori} $\mathbf{R}_{\bs{\gamma}}$ are absolutely continuous with 
respect to $\mathcal{H}^1\mathcal{b}\Gamma$.
Next, it holds 
for any Borel set $B\subset\R^n$ that the characteristic 
function  ${\chi_B}_{|\Gamma}$ is  the bounded pointwise limit 
$\mathcal{H}^1\mathcal{b}\Gamma$-a.e. (and thus 
$|\mathbf{R}_{\bs{\gamma}}|$-a.e. by what precedes) of a 
sequence of continuous functions $g_k:\Gamma\to\R$, by Lusin's theorem. 
Since $g_k$ 
is the restriction to $\Gamma$ of
some $f_k\in C_c(\R^n)$ with $\sup|f_k|=\sup|g_k|$
by the Tietze extension theorem (for $\Gamma$ is compact), 
we get from \eqref{Rgamma} that for any $v\in\R^n$
\[\langle \mathbf{R}_{\bs{\gamma}},f_k v\rangle= 
v\cdot\int_{\Gamma}f_k\left( \sum_{t\in\bs{\gamma}^{-1}(x)}\bs{\gamma}^\prime(t) \right)d\mathcal{H}^1(x)
\]
and, applying the dominated convergence theorem to both sides when $k\to\infty$,
we conclude since $v$ was arbitrary that  
\[
\mathbf{R}_{\bs{\gamma}}(B) 
=
\int_{\Gamma\cap B}\left( \sum_{t\in\bs{\gamma}^{-1}(x)}
\bs{\gamma}^\prime(t) \right)d \mathcal{H}^1(x).
\]
\end{proof}

\begin{lemma}
\label{sCell}
Let $\bs{\gamma}:[a,b]\to\R^n$ be a unit speed parametrization, 
$\bs{\Gamma}=\bs{\gamma}([a,b])$
its image and 
$\bb{R}_{\bs{\gamma}}$ the $\R^n$-valued measure defined by \eqref{Rgamma}. 
Then, $\|\bb{R}_{\bs{\gamma}}\|_{TV}=\ell(\bs{\gamma})$
if and only if, for $\mathcal{H}^1$-a.e. $x\in\bs{\Gamma}$, we have that
$\bs{\gamma}^\prime(t)$ is independent of $t\in\bs{\gamma}^{-1}(x)$.
\end{lemma}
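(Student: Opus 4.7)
The plan is to compare $\|\mathbf{R}_{\bs\gamma}\|_{TV}$ with $\ell(\bs\gamma)$ pointwise on $\Gamma$ via the expressions already available from \eqref{Rgammappv} and the area formula \eqref{areas}, then invoke equality in the triangle inequality for unit vectors.

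First, because $\bs\gamma$ is a unit speed parametrization, the area formula \eqref{areas} gives
\[
\ell(\bs\gamma)=\int_a^b|\bs\gamma'(t)|\,dt=\int_\Gamma N(\bs\gamma,x)\,d\mathcal{H}^1(x).
\]
On the other hand, by Lemma \ref{RNcurve} (see \eqref{Rgammappv}),
\[
\|\mathbf{R}_{\bs\gamma}\|_{TV}=|\mathbf{R}_{\bs\gamma}|(\R^n)=\int_{\Gamma}\left|\sum_{t\in\bs\gamma^{-1}(x)}\bs\gamma'(t)\right|d\mathcal{H}^1(x).
\]
So the identity $\|\mathbf{R}_{\bs\gamma}\|_{TV}=\ell(\bs\gamma)$ is equivalent to saying that the nonnegative integrand $N(\bs\gamma,x)-|\sum_{t\in\bs\gamma^{-1}(x)}\bs\gamma'(t)|$ vanishes $\mathcal{H}^1$-a.e. on $\Gamma$.

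Second, I would take care of the a.e. issues: let $N_0\subset[a,b]$ be the $\mathcal{L}_1$-null set on which either $\bs\gamma'$ fails to exist or $|\bs\gamma'|\neq1$. Because $\bs\gamma$ is Lipschitz, $\mathcal{H}^1(\bs\gamma(N_0))=0$ by the area formula, and the set where $N(\bs\gamma,x)=\infty$ is also $\mathcal{H}^1$-negligible by \eqref{areas}. Hence for $\mathcal{H}^1$-a.e. $x\in\Gamma$, the preimage $\bs\gamma^{-1}(x)$ is finite, disjoint from $N_0$, and $|\bs\gamma'(t)|=1$ for every $t\in\bs\gamma^{-1}(x)$.

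Third, for such $x$ the triangle inequality reads
\[
\left|\sum_{t\in\bs\gamma^{-1}(x)}\bs\gamma'(t)\right|\le\sum_{t\in\bs\gamma^{-1}(x)}|\bs\gamma'(t)|=N(\bs\gamma,x),
\]
with equality if and only if all the unit vectors $\{\bs\gamma'(t):t\in\bs\gamma^{-1}(x)\}$ are equal (a standard fact: if unit vectors $u_1,\dots,u_k$ satisfy $|u_1+\cdots+u_k|=k$ then $u_i\cdot u_j=1$ for all $i,j$, forcing $u_i=u_j$). Integrating this pointwise identity over $\Gamma$ gives
\[
\ell(\bs\gamma)-\|\mathbf{R}_{\bs\gamma}\|_{TV}=\int_\Gamma\bigl(N(\bs\gamma,x)-|\textstyle\sum_{t\in\bs\gamma^{-1}(x)}\bs\gamma'(t)|\bigr)\,d\mathcal{H}^1(x)\ge0,
\]
with equality if and only if, for $\mathcal{H}^1$-a.e.\ $x\in\Gamma$, $\bs\gamma'(t)$ takes a common value as $t$ ranges over $\bs\gamma^{-1}(x)$. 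This is exactly the equivalence asserted. There is no real obstacle here; the only subtlety worth checking carefully is the step ensuring that the unit-speed property is available at every $t\in\bs\gamma^{-1}(x)$ for $\mathcal{H}^1$-a.e.\ $x$, which is handled by the Lipschitz image-of-null-set argument above.
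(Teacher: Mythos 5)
Your proof is correct, and it reaches the forward implication by a different (and more economical) mechanism than the paper. The paper's proof of this lemma establishes the "only if" direction by taking a sequence $\bb{g}_k\in C_c(\R^n,\R^n)$, $|\bb{g}_k|\leq 1$, realizing the supremum defining $\|\bb{R}_{\bs\gamma}\|_{TV}$, and then extracting a subsequence along which $\bb{g}_{j(k)}(x)\cdot\bs\gamma'(t)\to 1$ for all $t\in\bs\gamma^{-1}(x)$ at $\mathcal{H}^1$-a.e.\ $x$, which forces the tangents over a fiber to coincide; the converse direction in the paper is essentially your computation. You instead use, in both directions, the closed-form expression $|\bb{R}_{\bs\gamma}|(B)=\int_{\Gamma\cap B}\bigl|\sum_{t\in\bs\gamma^{-1}(x)}\bs\gamma'(t)\bigr|\,d\mathcal{H}^1(x)$ from Lemma~\ref{RNcurve} and \eqref{Rgammappv}, together with \eqref{areas}, reducing everything to the equality case of the triangle inequality for finitely many unit vectors. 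This avoids the test-function approximation and the subsequence extraction entirely, at the cost of nothing: the formula for $|\bb{R}_{\bs\gamma}|$ is already established in the paper before the lemma is invoked. Your handling of the a.e.\ issues is also the right one — pushing forward the $\mathcal{L}_1$-null set of bad parameters through the Lipschitz map to get an $\mathcal{H}^1$-null subset of $\Gamma$, and discarding the $\mathcal{H}^1$-null set where $N(\bs\gamma,\cdot)=\infty$ via \eqref{areas} — so the pointwise triangle-inequality argument is legitimately available $\mathcal{H}^1$-a.e.\ on $\Gamma$.
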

\begin{proof}
If $\|\bb{R}_{\bs{\gamma}}\|_{TV}=\ell(\bs{\gamma})$,
there is a sequence of  continuous functions 
$\bb{g}_k\in C_c(\R^n,\R^n)$,
with $|\bb{g}_k|\leq 1$, such that
\begin{equation}
\label{calCell}
\ell(\bs{\gamma})=\lim_{k\to\infty}\langle \mathbf{R}_{\bs{\gamma}},\bb{g}_k\rangle=
\lim_{k\to\infty}\int_{\Gamma}\left( \sum_{t\in\bs{\gamma}^{-1}(x)}\bb{g}_k(x)\cdot\bs{\gamma}^\prime(t)\right) \,d\mathcal{H}^1(x).
\end{equation}
As $|\bb{g}_k(\bs{\gamma}(t))|\leq1=|\bs{\gamma}^\prime(t)|$,
we see from \eqref{areas}, \eqref{calCell}
 and the definition of $N(\bs{\gamma},x)$ that
for some subsequence $j(k)$ and $\mathcal{H}^1$-a.e. $x\in\Gamma$, 
we have $\lim_{k}\bb{g}_{j(k)}(x)\cdot\bs{\gamma}^\prime(t)=1$ for
all $t$ such that $\bs{\gamma}(t)=x$. In particular, 
$\bs{\gamma}^\prime(t)$ is independent of
$t\in\bs{\gamma}^{-1}(x)$ for $\mathcal{H}^1$-a.e. $x$. Conversely, 
if the latter property hold, 
we get from \eqref{Rgammappv} and \eqref{areas} that 
$|\bb{R}_\gamma|(\R^n)=\ell(\bs{\gamma})$.
\end{proof}
\begin{lemma}
\label{foncset}
Let $\bs{\mu}\in\mathcal{M}(\R^n)^n$ and $\rho$ be a finite positive 
Borel measure on $\mathcal{C}_\ell$ for some
$\ell>0$. Then, \eqref{Smiw} holds if and only if
\eqref{Smi1} does.
\end{lemma}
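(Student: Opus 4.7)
The plan is to reduce both \eqref{Smiw} and \eqref{Smi1} to a common identification of two finite Borel measures on $\R^n$, using a Fubini-type identity together with the Riesz representation theorem. The bookkeeping is symmetric for the vector part $\bs{\mu}$ and for the scalar part $|\bs{\mu}|$, so I would present the argument once, applied in parallel to both.

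First I would check that, for every Borel set $B\subset\R^n$, the maps $\mathbf{R}\mapsto \mathbf{R}(B)$ and $\mathbf{R}\mapsto |\mathbf{R}|(B)$ are Borel measurable on $\mathcal{C}_\ell$ endowed with the weak-$*$ topology. The starting point is that for any $\mathbf{g}\in C_c(\R^n,\R^n)$ the map $\mathbf{R}\mapsto\langle\mathbf{R},\mathbf{g}\rangle$ is continuous, and that by \eqref{rouv},
\[
\mathbf{R}\mapsto \langle|\mathbf{R}|,\varphi\rangle=\sup\bigl\{\langle\mathbf{R},\mathbf{g}\rangle:\mathbf{g}\in C_c(\R^n,\R^n),\,|\mathbf{g}|\leq\varphi\bigr\}
\]
for $\varphi\in C_c(\R^n)$ with $\varphi\geq0$, which by separability of the admissible test class is a countable supremum of continuous functions, hence lower semicontinuous and in particular Borel. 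A functional monotone class argument, using the uniform bound $|\mathbf{R}|(\R^n)=\ell$ on $\mathcal{C}_\ell$, then upgrades Borel measurability of the maps $\mathbf{R}\mapsto\int h\,d\mathbf{R}$ and $\mathbf{R}\mapsto\int h\,d|\mathbf{R}|$ from $h\in C_c$ to all bounded Borel $h$; specializing to $h=\chi_B$ gives the claim.

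Since $|\mathbf{R}(B)|\leq\ell$ and $|\mathbf{R}|(B)\leq\ell$ uniformly on $\mathcal{C}_\ell$, the integrals in \eqref{Smi1} are well defined, and one readily checks by monotone convergence that
\[
\tilde{\bs{\mu}}(B):=\int_{\mathcal{C}_\ell}\mathbf{R}(B)\,d\rho(\mathbf{R}),\qquad \tilde{\nu}(B):=\int_{\mathcal{C}_\ell}|\mathbf{R}|(B)\,d\rho(\mathbf{R})
\]
define a finite $\R^n$-valued Borel measure $\tilde{\bs{\mu}}$ and a finite positive Borel measure $\tilde{\nu}$ on $\R^n$. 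The key bridge between \eqref{Smiw} and \eqref{Smi1} is the Fubini identity
\[
\int\mathbf{g}\cdot d\tilde{\bs{\mu}}=\int_{\mathcal{C}_\ell}\langle\mathbf{R},\mathbf{g}\rangle\,d\rho(\mathbf{R}),\qquad
\int\varphi\,d\tilde{\nu}=\int_{\mathcal{C}_\ell}\langle|\mathbf{R}|,\varphi\rangle\,d\rho(\mathbf{R}),
\]
valid for all bounded Borel $\mathbf{g}$ and $\varphi$. This is immediate for indicator functions from the definition of $\tilde{\bs{\mu}}$ and $\tilde{\nu}$, extends to simple functions by linearity, and to bounded Borel functions by the dominated convergence theorem, again using the uniform bound $\ell$.

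With these tools, \eqref{Smiw} is exactly the statement that $\int\mathbf{g}\cdot d\bs{\mu}=\int\mathbf{g}\cdot d\tilde{\bs{\mu}}$ and $\int\varphi\,d|\bs{\mu}|=\int\varphi\,d\tilde{\nu}$ for all test functions $\mathbf{g}\in C_c^\infty(\R^n,\R^n)$ and $\varphi\in C_c^\infty(\R^n)$; by mollification this passes to $C_c$, so the Riesz representation theorem forces $\bs{\mu}=\tilde{\bs{\mu}}$ and $|\bs{\mu}|=\tilde{\nu}$, which is \eqref{Smi1}. The converse is even shorter: given \eqref{Smi1}, the Fubini identity applied to $\mathbf{g}\in C_c^\infty(\R^n,\R^n)$ and $\varphi\in C_c^\infty(\R^n)$ yields \eqref{Smiw} at once. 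The main obstacle, and essentially the only nontrivial point, is the Borel measurability step, in particular the passage from continuous test functions to indicators of arbitrary Borel sets; once that is in hand the remainder of the proof is routine Fubini-Riesz bookkeeping.
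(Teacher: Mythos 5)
Your proof is correct, and it takes a genuinely different route from the paper's. You construct the set functions $\tilde{\bs{\mu}}(B)=\int_{\mathcal{C}_\ell}\mathbf{R}(B)\,d\rho$ and $\tilde{\nu}(B)=\int_{\mathcal{C}_\ell}|\mathbf{R}|(B)\,d\rho$ up front (after checking Borel measurability of $\mathbf{R}\mapsto\mathbf{R}(B)$ and $\mathbf{R}\mapsto|\mathbf{R}|(B)$ on $\mathcal{C}_\ell$ and countable additivity), prove a Fubini identity for bounded Borel integrands, and then observe that \eqref{Smiw} and \eqref{Smi1} are both the single assertion $\bs{\mu}=\tilde{\bs{\mu}}$, $|\bs{\mu}|=\tilde{\nu}$, tested respectively against $C_c$ functions and against indicators; uniqueness in the Riesz representation (equivalently, \eqref{rouv} applied to $\bs{\mu}-\tilde{\bs{\mu}}$) closes the loop. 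The paper instead works directly with $\bs{\mu}$: for the scalar identity it passes from $C_c$ to open sets by monotone convergence, then to arbitrary Borel sets by outer regularity combined with equality of total masses (a pincer argument); for the vector identity it uses Lusin's theorem together with the just-established fact that $|\bs{\mu}|$-null sets are $|\mathbf{R}|$-null for $\rho$-a.e.\ $\mathbf{R}$, so that dominated convergence applies on both sides. Your route is more symmetric and avoids both the regularity/total-mass step and the Lusin step, at the cost of the explicit measurability and monotone-class preliminaries, which the paper leaves largely implicit (measurability against $C_c$ test functions is built into the hypothesis \eqref{Smiw}, and measurability against indicators into \eqref{Smi1}); making that measurability explicit is a small additional merit of your version.
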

\begin{proof}
Assume that \eqref{Smiw} holds, and let $V\subset\R^n$ be open. Let 
$\varphi_k\in C_c(V)$ be a sequence 
of  nonnegative functions increasing to $\chi_V$;   such a sequence is easily 
constructed using Urysohn's lemma and the
$\sigma$-compactness of $V$. Applying the second identity in \eqref{Smiw}
to $\varphi_k$, we get by monotone convergence that
\begin{equation}\label{Smimo}
|\bs{\mu}|(V)=\lim_{k\to+\infty}\langle|\bs{\mu}|,\varphi_k\rangle=
\lim_{k\to+\infty}\int\langle|\bb{R}_{\bs{\gamma}}|,\varphi_k\rangle
d\rho(\mathbf{R}_{\bs\gamma})=\int|\bb{R}_{\bs{\gamma}}|(V)d\rho(\mathbf{R}_{\bs\gamma}).
\end{equation}
Hence, $|\bs{\mu}|$ and $\int|\bb{R}_{\bs{\gamma}}|d\rho$ coincide on 
open sets.
In particular, we get for $V=\R^n$ that
\begin{equation}\label{Smim}
\|\bs{\mu}\|_{TV}=\int_{\mathcal{C}_\ell}\|\mathbf{R}_{\bs\gamma}\|_{TV}
d\rho(\mathbf{R}_{\bs\gamma}).
\end{equation}
Moreover, as $|\bs{\mu}|$ is
regular, we see from \eqref{Smimo} that for any Borel 
set $B\subset\R^n$:
\begin{equation}\label{Smio}
|\bs{\mu}|(B)=\inf\{|\bs{\mu}|(V), \ B\subset V \,\mbox{\rm open}\}
=\inf_V \,\int_{\mathcal{C}_\ell}|\mathbf{R}_{\bs\gamma}|(V)
d\rho(\mathbf{R}_{\bs\gamma})\geq
\int_{\mathcal{C}_\ell}|\mathbf{R}_{\bs\gamma}|(B)
d\rho(\mathbf{R}_{\bs\gamma}).
\end{equation}
The conjunction of \eqref{Smim} and \eqref{Smio} implies the 
second equality  in
\eqref{Smi1}. 

To obtain the first equality in \eqref{Smi1}, apply Lusin's theorem to the effect
that  $\chi_B$ is the bounded pointwise limit of
a sequence $f_k\in C_c(\R^n)$, except on a Borel set $E$ of 
$|\bs{\mu}|$-measure  zero. From the second equality in \eqref{Smi1}, 
it follows that
$|\mathbf{R}_{\bs{\gamma}}|(E)=0$ for $\rho$-a.e. $\mathbf{R}_{\bs{\gamma}}\in\mathcal{C}_\ell$. Thus,
if we set $\mathbf{R}_{\bs{\gamma}}=(m_1,\cdots,m_n)^T$ to indicate the 
components of $\mathbf{R}_{\bs{\gamma}}$ in $\mathcal{M}(\R^n)^n$, 
we get {\it a fortiori} 
that $|m_j|(E)=0$ for $\rho$-a.e. $\mathbf{R}_{\bs{\gamma}}$.
So, picking $v=(v_1,\cdots,v_n)^T\in\R^n$,
we deduce for such $\mathbf{R}_{\bs{\gamma}}$ 
on applying the dominated convergence theorem component-wise
that
\begin{equation}
\label{limsi}
\lim_k \langle \mathbf{R}_{\bs{\gamma}},f_kv\rangle
=\sum_{j=1}^n v_j\lim_k 
 \int f_kd m_j=\\\sum_{j=1}^n v_j
 \int \chi_Bd m_j=v\cdot \mathbf{R}_{\bs{\gamma}}(B).
\end{equation}
Since $v$ was arbitrary, we can now show 
the first equality in \eqref{Smi1} from the first equation in \eqref{Smiw},
applied with $\bb{g}=f_k v$, by invoking 
the dominated convergence theorem when $k\to\infty$,
in $L^1[d|\bs{\mu}|]$ on the left hand side and 
in $L^1[d|{\rho}|]$ on the right hand side.

Conversely, if \eqref{Smi1} holds, sets of
$|\bs{\mu}|$-measure zero have $|\bb{R}_{\bs{\gamma}}|$-measure zero for
 $\rho$-a.e. $\bb{R}_{\bs{\gamma}}$, moreover
$|\bs{\mu}|$ and $\int|\bb{R}_{\bs{\gamma}}|d\rho$
(resp. $\bs{\mu}$ and $\int\bb{R}_{\bs{\gamma}}d\rho$) have the same integral on simple functions, hence also on $L^1[d|\bs{\mu}|]$
(resp. $(L^1[d|\bs{\mu}|])^n$). This is logically stronger than \eqref{Smiw}.
\end{proof}

\begin{lemma}
\label{unitels}
Let $\bb{T}_{\bb{f}}$ be an elementary solenoid as in
 \eqref{SmirnovA}.
Then, there is a Lipschitz map $\bb{g}:\R\to\R^n$ with
$|\bb{g}^\prime(t)|=1$ a.e. such that $\bb{T}_{\bb{g}}$ is an elementary solenoid with
$\bb{T}_{\bb{f}}=\bb{T}_{\bb{g}}$.
\end{lemma}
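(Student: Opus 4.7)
My plan is to reparametrize $\bb{f}$ by arclength, mirroring the strategy used in Section \ref{sec:cam} for parametrized rectifiable curves on a compact interval. Set $\psi(t):=\int_0^t|\bb{f}'(\tau)|\,d\tau$ for $t\ge0$ and $\psi(t):=-\int_t^0|\bb{f}'(\tau)|\,d\tau$ for $t<0$; this is a $1$-Lipschitz, nondecreasing function with $\psi'=|\bb{f}'|$ a.e. The first key point is that $\psi$ is surjective onto $\R$. Indeed, since $\bb{T}_{\bb{f}}=*\lim\bb{R}_{\bb{f}_s}/(2s)$ with $\bb{f}_s=\bb{f}|_{[-s,s]}$, lower semi-continuity of the total variation norm under weak-$*$ convergence yields
\[
1=\|\bb{T}_{\bb{f}}\|_{TV}\le\liminf_{s\to\infty}\frac{\|\bb{R}_{\bb{f}_s}\|_{TV}}{2s}\le\liminf_{s\to\infty}\frac{\ell(\bb{f}_s)}{2s}=\liminf_{s\to\infty}\frac{\psi(s)-\psi(-s)}{2s},
\]
while $|\bb{f}'|\le1$ imposes the upper bound $(\psi(s)-\psi(-s))/(2s)\le1$; hence $(\psi(s)-\psi(-s))/(2s)\to1$. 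Since $\psi(s)/s$ and $-\psi(-s)/s$ both lie in $[0,1]$ and their sum tends to $2$, each must tend to $1$, so in particular $\psi(\R)=\R$.

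Next, I would define $\bb{g}:\R\to\R^n$ by setting $\bb{g}(u):=\bb{f}(t)$ for any $t\in\psi^{-1}(u)$; this is well defined because $\psi(t_1)=\psi(t_2)$ with $t_1<t_2$ forces $|\bb{f}'|=0$ a.e. on $[t_1,t_2]$, hence $\bb{f}$ is constant there. A direct estimate shows that $\bb{g}$ is $1$-Lipschitz on $\R$, and since $\bb{f}=\bb{g}\circ\psi$, the chain rule combined with Sard's theorem for Lipschitz functions (as in Section \ref{sec:cam}) gives $\bb{f}'(t)=\bb{g}'(\psi(t))\psi'(t)$ a.e., which forces $|\bb{g}'|=1$ a.e. on $\psi(\R)=\R$.

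To conclude I would show that $\bb{T}_{\bb{g}}=\bb{T}_{\bb{f}}$ by comparing the averages in \eqref{SmirnovA}. Changing variables $u=\psi(t)$ yields
\[
\int_{-s}^s \bs{\varphi}(\bb{f}(t))\cdot\bb{f}'(t)\,dt=\int_{\psi(-s)}^{\psi(s)} \bs{\varphi}(\bb{g}(u))\cdot\bb{g}'(u)\,du,
\]
and since $|\bb{g}'|\le1$, replacing the limits of integration by $\pm s$ alters the value by at most $\sup|\bs{\varphi}|(|s-\psi(s)|+|s+\psi(-s)|)=o(s)$, thanks to the asymptotics $\psi(\pm s)/s\to\pm1$ established above. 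Dividing by $2s$ and letting $s\to\infty$ shows that the limit defining $\bb{T}_{\bb{g}}(\bs{\varphi})$ exists and equals $\bb{T}_{\bb{f}}(\bs{\varphi})$. The remaining requirements for $\bb{T}_{\bb{g}}$ to be an elementary solenoid---namely $\|\bb{T}_{\bb{g}}\|_{TV}=1$ and $\bb{g}(\R)\subset\supp\bb{T}_{\bb{g}}$---then follow immediately from $\bb{g}(\R)=\bb{f}(\R)$ and from the equality $\bb{T}_{\bb{g}}=\bb{T}_{\bb{f}}$.

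The main subtlety I anticipate is the surjectivity of $\psi$: a priori $\bb{f}$ could have finite arclength in one or both directions, but the hypothesis $\|\bb{T}_{\bb{f}}\|_{TV}=1$ forces the asymptotic average speed to be $1$ in each direction, ruling this out. Once this is in place, the rest is careful book-keeping of the change of variables.
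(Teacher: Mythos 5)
Your proposal is correct and follows essentially the same route as the paper: the paper likewise uses weak-$*$ lower semi-continuity of the $TV$-norm to show $\frac{1}{2s}\int_{-s}^{s}|\bb{f}'(t)|\,dt\to 1$ and then reparametrizes by arclength, leaving the details of that reparametrization (surjectivity of $\psi$, the change of variables, and the $o(s)$ adjustment of the integration limits) implicit by reference to the discussion after \eqref{Rgamma}. Your write-up simply fills in those details explicitly, and does so correctly.
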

\begin{proof}
Recall from Section \ref{sec:decss} that 
$\bb{T}=*\lim\,\mathbf{R}_{\bb{f}_s}/s$ as $s\to+\infty$, 
where we have set $\bb{f}_s=\bb{f}_{|[-s,s]}$.
As  the $TV$-norm of the weak-$*$
limit cannot exceed the limit of the $TV$-norms, we get
since $|\bb{f}'(t)|\leq1$ that
\begin{equation}
\label{ninf}
1=\|\bb{T}_{\bb{f}}\|_{TV}\leq \liminf_{s\to+\infty}
\frac{1}{2s}\|\mathbf{R}_{\bb{f}_s}\|_{TV}\leq
\liminf_{s\to+\infty}
\frac{1}{2s}\int_{-s}^s|\bb{f}'(t)|dt\leq1.
\end{equation}
Thus,  $\frac{1}{2s}\int_{-s}^s|\bb{f}'(t)|dt\to1$
as $s\to+\infty$ and therefore, reparametrizing $\bb{f}$ by
unit speed like we did for $\bs{\gamma}$ 
after \eqref{Rgamma}, we obtain the desired function $\bb{g}$.
\end{proof}


\begin{lemma}
\label{desces}
Let $\bb{T}_{\bb{f}}$ be an elementary solenoid as in
 \eqref{SmirnovA} and $\Gamma_s=\bb{f}([-s,s])$. 
Then, the family $\{\nu_s\}_{s>0}$ of normalized
arclengths on $\Gamma_s$,
defined in \eqref{prob}, converges weak-$*$, when
$s\to+\infty$, to the probability measure $|\bb{T}_{\bb{f}}|$.
Moreover, if $\bs{\varphi}_j\in C_c(\R^n,\R^n)$  is a sequence of continuous 
functions, with $|\bs{\varphi}_j|\leq1$,
such that $\langle \bb{T}_{\bb{f}},\bs{\varphi}_j\rangle\to1$ as $j\to\infty$,
then
\begin{equation}
\label{limtan}
\lim_{j\to\infty}\,\limsup_{s\to+\infty}\int\left|\bs{\varphi}_j(x)-
\frac{\sum_{t\in\bb{f}^{-1}(x),\,|t|\leq s}
\bb{f}^\prime(t) }{ N(\bb{f},x,s)}\right|^2d {\nu}_s=0.
\end{equation}
\end{lemma}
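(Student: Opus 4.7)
The plan is to unwind the definition of $\nu_s$ through the area formula and compare it with the measure $\mathbf{R}_{\bb{f}_s}$ from \eqref{Rgamma}, so that both assertions become consequences of the weak-$*$ convergence $\mathbf{R}_{\bb{f}_s}/(2s)\to\bb{T}_{\bb{f}}$ built into the definition of $\bb{T}_{\bb{f}}$. Since we may assume $|\bb{f}^\prime|=1$ a.e.\ (\emph{cf.}\ Lemma \ref{unitels}), the area formula ensures that $\nu_s$ is a probability measure, and the key comparison is the pointwise majoration, for every Borel set $B\subset\R^n$,
\[
|\mathbf{R}_{\bb{f}_s}|(B) \leq \int_{\Gamma_s\cap B} N(\bb{f},x,s)\,d\mathcal{H}^1(x) = 2s\,\nu_s(B),
\]
obtained by matching \eqref{Rgammappv} with \eqref{prob} via the triangle inequality, together with the mass relation $\|\mathbf{R}_{\bb{f}_s}\|_{TV}/(2s)\to 1 = \nu_s(\R^n)$, which follows from \eqref{ninf}.

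For the first assertion, I would invoke a Reshetnyak-type continuity result: if a sequence of $\R^n$-valued measures converges weak-$*$ and their total variation masses converge to the total variation mass of the limit, then the total variation measures themselves converge weak-$*$. (This is a soft consequence of the lower semi-continuity of the $TV$-norm under weak-$*$ convergence combined with the matching of total masses.) Applied to $\mathbf{R}_{\bb{f}_s}/(2s)\to\bb{T}_{\bb{f}}$, it yields $|\mathbf{R}_{\bb{f}_s}|/(2s)\to|\bb{T}_{\bb{f}}|$ weak-$*$. The difference $\nu_s-|\mathbf{R}_{\bb{f}_s}|/(2s)$ is a positive measure of total mass $1-\|\mathbf{R}_{\bb{f}_s}\|_{TV}/(2s)$, which tends to $0$, so it converges to $0$ in total variation, and \emph{a fortiori} weak-$*$. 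Adding, $\nu_s\to|\bb{T}_{\bb{f}}|$ weak-$*$.

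For \eqref{limtan}, introduce the averaged tangent vector field $\bb{T}_s(x):=\sum_{t\in\bb{f}^{-1}(x),\,|t|\leq s}\bb{f}^\prime(t)/N(\bb{f},x,s)$, which satisfies $|\bb{T}_s|\leq 1$. The pointwise bound $|\bs{\varphi}_j-\bb{T}_s|^2 \leq 2 - 2\,\bs{\varphi}_j\cdot\bb{T}_s$, valid because $|\bs{\varphi}_j|\leq 1$ and $|\bb{T}_s|\leq 1$, reduces matters to estimating $\int\bs{\varphi}_j\cdot\bb{T}_s\,d\nu_s$. Unfolding the definitions of $\bb{T}_s$ and $\nu_s$ and applying the area formula (\emph{cf.}\ \eqref{Rgamma}) yields
\[
\int \bs{\varphi}_j\cdot\bb{T}_s\,d\nu_s = \frac{1}{2s}\int_{-s}^s \bs{\varphi}_j(\bb{f}(t))\cdot\bb{f}^\prime(t)\,dt = \frac{1}{2s}\langle\mathbf{R}_{\bb{f}_s},\bs{\varphi}_j\rangle,
\]
which converges to $\langle\bb{T}_{\bb{f}},\bs{\varphi}_j\rangle$ as $s\to+\infty$ by \eqref{SmirnovA}. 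Since $\nu_s(\R^n)=1$, it follows that $\limsup_{s\to+\infty}\int |\bs{\varphi}_j-\bb{T}_s|^2\,d\nu_s \leq 2 - 2\langle\bb{T}_{\bb{f}},\bs{\varphi}_j\rangle$, and the right-hand side tends to $0$ as $j\to\infty$ by hypothesis. The only nontrivial step is the Reshetnyak-style continuity argument used in the first claim; however, thanks to the sandwich $|\mathbf{R}_{\bb{f}_s}|/(2s)\leq\nu_s$ with matching limit masses, it is truly soft and reduces to lower semi-continuity of the $TV$-norm combined with mass matching.
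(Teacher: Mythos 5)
Your proof is correct. The argument for \eqref{limtan} is essentially the paper's own: expand $|\bs{\varphi}_j-\bb{T}_s|^2\leq 2-2\bs{\varphi}_j\cdot\bb{T}_s$ using $|\bb{T}_s|\leq1$, and identify $\int\bs{\varphi}_j\cdot\bb{T}_s\,d\nu_s$ with $\langle\mathbf{R}_{\bb{f}_s},\bs{\varphi}_j\rangle/(2s)\to\langle\bb{T}_{\bb{f}},\bs{\varphi}_j\rangle$. For the first assertion you take a slightly different but equivalent route: you apply a Reshetnyak-type continuity statement to $\mathbf{R}_{\bb{f}_s}/(2s)$ (weak-$*$ convergence plus convergence of masses, the latter extracted correctly from \eqref{ninf}) and then absorb the positive deficit $\nu_s-|\mathbf{R}_{\bb{f}_s}|/(2s)$, whose total mass vanishes, in total variation. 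The paper instead argues directly on an arbitrary weak-$*$ accumulation point $\nu$ of $\{\nu_s\}$, showing $|\bb{T}_{\bb{f}}|(V)\leq\nu(V)$ on open sets via the bound $\bigl|\sum_{t}\bb{f}^\prime(t)\bigr|\leq N(\bb{f},x,s)$ and then matching total masses. Both proofs rest on exactly the same two ingredients (lower semicontinuity of mass under weak-$*$ convergence and that pointwise triangle inequality); your packaging has the small advantage of also yielding $|\mathbf{R}_{\bb{f}_s}|/(2s)\to|\bb{T}_{\bb{f}}|$ weak-$*$ and the $TV$-asymptotics of $\nu_s$ with $|\mathbf{R}_{\bb{f}_s}|/(2s)$ as byproducts, at the cost of invoking (or, as you indicate, reproving in this one-sided setting) the Reshetnyak continuity theorem. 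No gap.
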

\begin{proof}
The family  $\{ {\nu}_s\}_{s>0}$ has at least
one weak-$*$ accumulation point as $s\to+\infty$, say $ {\nu}$.
Let $s_k$ be a sequence of positive real numbers tending to 
$+\infty$ and such that
$\nu_{s_k}$ converges weak-$*$ to $\nu$. For $V\subset\R^n$ an open set, we get 
by \eqref{rouv} that
\begin{multline*}
|\bb{T}_{\bb{f}}|(V)=
\sup\{\langle\bb{T}_{\bb{f}},\bs{\varphi}\rangle,\,\bs{\varphi}\in C_c(V,\R^n),\,
|\bs{\varphi}|\leq1\}\\
=\sup_{\bs{\varphi}}\,\lim_{s\to+\infty}
\int_{\Gamma_{s}}\bs{\varphi}(x) \cdot\frac{\left( \sum_{t\in\bb{f}^{-1}(x),\,|t|\leq s}
\bb{f}^\prime(t) \right)}{2s}d\mathcal{H}^1(x)\\
\leq\sup_{\bs{\varphi}}\,\liminf_{k\to\infty}
\int_{\Gamma_{s_k}}|\bs{\varphi}(x)| \frac{\left| \sum_{t\in\bb{f}^{-1}(x),\,|t|\leq s_k}
\bb{f}^\prime(t) \right|}{2s_k}d\mathcal{H}^1(x)\\
\leq\sup_{\bs{\varphi}}\,\lim_{k\to\infty}
\int_{\Gamma_{s_k}}|\bs{\varphi}(x)| \frac{
N(\bb{f},x,s_k)}{2s_k}d\mathcal{H}^1(x)
=\sup_{\bs{\varphi}}\,\langle\nu, |\bs{\varphi}|\rangle\leq\nu(V).
\end{multline*}
Thus, by regularity, $|\bb{T}_{\bb{f}}|(B)\leq\nu(B)$ for any Borel set 
$B\subset\R^n$, and since $|\bb{T}_{\bb{f}}|$ is a probability measure 
(by definition of an elementary solenoid) while $\|\nu\|_{TV}\leq1$ by
the Banach-Alaoglu theorem, we conclude that
$|\bb{T}_{\bb{f}}|=\nu$. This proves the first assertion.

Next, if $\bs{\varphi}\in C_c(\R^n,\R^n)$, $|\bs{\varphi}|\leq1$,
is such that $\langle \bb{T}_{\bb{f}},\bs{\varphi}\rangle>1-\varepsilon$ 
for some $\varepsilon\in(0,1)$, then it follows from \eqref{Rgammappv}
and the definition of $\bb{T}_{\bb{f}}$ that for $s>s_0=s_0(\bs{\varphi})$ 
large enough:
\[
1-\varepsilon<\int_{\Gamma_{s}}\bs{\varphi}(x) \cdot\frac{\left( \sum_{t\in\bb{f}^{-1}(x),\,|t|\leq s}
\bb{f}^\prime(t) \right)}{2s}d\mathcal{H}^1(x)=
\int\bs{\varphi}(x) \cdot\frac{\left( \sum_{t\in\bb{f}^{-1}(x),\,|t|\leq s}
\bb{f}^\prime(t) \right)}{N(\bb{f},x,s)}d\nu_s(x).
\]
Because $| \sum_{t\in\bb{f}^{-1}(x),\,|t|\leq s}
\bb{f}^\prime(t)|\leq N(\bb{f},x,s)$, the above inequality entails
that
\[
\int\left|\bs{\varphi}(x)-
\frac{\sum_{t\in\bb{f}^{-1}(x),\,|t|\leq s}
\bb{f}^\prime(t) }{ N(\bb{f},x,s)}\right|^2d {\nu}_s<
2\varepsilon,
\]
which implies \eqref{limtan}.
\end{proof}

 \bibliographystyle{abbrv}

\bibliography{Planarrefs}

\end{document}